\theoremstyle{plain}
\numberwithin{equation}{section}
\newtheorem{theorem}{Theorem}
\newtheorem{lemma}{Lemma}
\newtheorem{proposition}{Proposition}
\theoremstyle{remark}
\DeclareMathOperator{\Rset}{\mathbf{R}}
\DeclareMathOperator{\vol}{\textrm{vol}}
\newcommand{\Sscr}{\mathscr S}
\newcommand{\Cscr}{\mathscr C}
\newcommand{\Nscr}{\mathscr N}
\def\cfac#1{\ifmmode\setbox7\hbox{$\accent"5E#1$}\else\setbox7\hbox{\accent"5E#1}\penalty 10000\relax\fi\raise 1\ht7\hbox{\lower1.05ex\hbox to 1\wd7{\hss\accent"13\hss}}\penalty 10000\hskip-1\wd7\penalty 10000\box7 }
\author[Q.A. Ng\^o]{Qu\cfac oc Anh Ng\^o}
\address[Q.A. Ng\^o]{Department of Mathematics, College of Science\\
Vi\^{e}t Nam National University\\
H\`{a} N\^{o}i, Vi\^{e}t Nam}
\email{nqanh@vnu.edu.vn,  bookworm\_vn@yahoo.com}
\author[V.H. Nguyen]{Van Hoang Nguyen}
\address[V.H. Nguyen]{School of Mathematical Sciences\\
Tel Aviv University\\
Tel Aviv 69978, Israel.}
\email{vanhoang0610@yahoo.com}
\thanks{V.H. Nguyen's current address: Institut de Math\'ematiques de Toulouse, Universit\'e Paul Sabatier, 31062 Toulouse c\'edex 09, France.}
\dedicatory{Dedicated to Professor Ho\`{a}ng Qu\^{o}\hspace{0.5ex}\llap{\raise 0.5ex\hbox{\'{}}}\hspace{-0.5ex}c To\`{a}n on the occasion of his 70th birthday}
\begin{document} 

\allowdisplaybreaks

\title[Sharp reversed HLS inequality on $\Rset^n$]
{Sharp reversed Hardy--Littlewood--Sobolev inequality on $\Rset^n$}

\begin{abstract}
This is the first in our series of papers that concerns Hardy--Littlewood--Sobolev (HLS) type inequalities. In this paper, the main objective is to establish the following sharp reversed HLS inequality in the whole space $\Rset^n$% of the following form
\[\int_{\Rset^n} \int_{\Rset^n} f(x) |x-y|^\lambda g(y) dx dy \geqslant \Cscr_{n,p,r} \|f\|_{L^p(\Rset^n)}\, \|g\|_{L^r(\Rset^n)}\]
for any non-negative functions $f\in L^p(\Rset^n)$, $g\in L^r(\Rset^n)$, and $p,r\in (0,1)$, $\lambda > 0$ such that $1/p + 1/r -\lambda /n =2$. We will also explore some estimates for $\Cscr_{n,p,r}$ and the existence of optimal functions for the above inequality, which will shed light on some existing results in literature.
\end{abstract}

\date{\bf \today \; at \, \currenttime}

\subjclass[2000]{35J60, 42B25, 53C21, 58G35}

\keywords{Reversed Hardy--Littlewood--Sobolev inequality, existence of optimal functions, classification of solutions; integral system; method of moving spheres}

\maketitle
\tableofcontents

\section{Introduction}

Of importance in quantitative theories of differential equations are the so-called Sobolev inequalities. Generally, these inequalities provide an estimate of lower order derivatives of a function in terms of higher order derivatives. Such an estimate is an essential tool in other areas of mathematical analysis including calculus of variation, geometric analysis, etc. Let us recall the following sharp fractional Sobolev inequality
\begin{equation}\label{eqFractionalSobolevInequality}
\left( \int_{\Rset^n} |u|^\frac{2n}{n-2s} dx \right)^{1-2s/n} \leqslant \Sscr_{n,s} \|u\|_{W^s(\Rset^n)}^2
\end{equation}
for all $u \in W^s(\Rset^n)$ where $s \in (0,n/2)$. The best constant $\Sscr_{n,s}$ in \eqref{eqFractionalSobolevInequality} is computed as
\begin{equation}\label{eqBestConstantFractionalSobolevInequality}
\Sscr_{n,s}= \frac{{\Gamma (n/2-s)}}{{{2^{2s}}{\pi ^s}\Gamma (n/2+s)}}{\left( {\frac{{\Gamma (n)}}{{\Gamma (n/2)}}} \right)^{2s/n}}
\end{equation}
and the equality in \eqref{eqFractionalSobolevInequality} occurs if and only if $u(x)=c(1+|(x-x_0)/t|^2)^{s-n/2}$ for some $t>0$, $c \in \Rset$ and $x_0 \in \Rset^n$. Concerning the best constant $\Sscr_{n,s}$, it was first computed by Rosen \cite{Rosen71} in the case $s=1$ and $n=3$. For general $n \geqslant 3$ and with $s=1$, the best constant $\Sscr_{n,1}$ was computed independently by Aubin \cite{aubin76} and Talenti \cite{talenti76}. For general $s \in (0, n/2)$, the best constant $\Sscr_{n,s}$ was given by Lieb in \cite{l1983} when he considered the sharp constant of Hardy--Littlewood--Sobolev (HLS) inequalities which will be mentioned later.

In existing literature, the classical HLS inequality named after Hardy and Littlewood \cite{hl1928,hl1930} and Sobolev \cite{sobolev1938} on $\Rset^n$ states that for any $n\geqslant 1$, $p,r > 1$ and $\lambda \in (0,n)$ satisfying $1/p + 1/r + \lambda /n =2$, there exists a constant $\Nscr_{n,\lambda,p} > 0$ such that
\begin{equation}\label{eq:HLSineq}
\bigg |\int_{\Rset^n}\int_{\Rset^n} \frac{f(x)g(y)}{|x-y|^\lambda} dx dy\bigg| \leqslant \Nscr_{n,\lambda,p} \|f\|_{L^p(\Rset^n)} \|g\|_{L^r(\Rset^n)},
\end{equation}
for any $f\in L^p(\Rset^n)$ and $g\in L^q(\Rset^n)$.

From \cite[Theorem $4.3$]{liebloss2001}, it is well-known that the sharp constant $\Nscr_{n,\lambda,p}$ satisfies the following estimate
\[ 
\Nscr_{n,\lambda,p} \leqslant {\frac{n}{n-\lambda}} {\left(\frac{\pi^{\lambda/2}}{\Gamma(1 + n/2)}\right)^{\lambda/n}} {\frac{1}{pr}} {\left({\left(\frac{\lambda p}{n (p-1)}\right)^{\lambda/n}} + {\left(\frac{\lambda r}{n (r-1)}\right)^{\lambda/n}}\right)}
\]
while in the diagonal case $p=r=2n/(2n-\lambda)$ (or one of these parameters is $2$), it follows from the seminal work \cite{l1983} that 
\[
\Nscr_{n,\lambda,p} =\Nscr_{n,\lambda} = {\pi ^{\lambda /2}}\frac{{\Gamma (n/2 - \lambda /2)}}{{\Gamma (n -  \lambda /2)}}{\left( {\frac{{\Gamma (n)}}{{\Gamma (n/2)}}} \right)^{1-\lambda /n}} .
\]
The existence of optimal functions to \eqref{eq:HLSineq} was also proven by Lieb in \cite{l1983} by using symmetric rearrangement arguments. Generally speaking, the equality in \eqref{eq:HLSineq} occurs if and only if $f(x)=g(x)=c(1+|(x-x_0)/t|^2)^{\lambda/2-n}$ for some $t>0$, $c \in \Rset$ and $x_0 \in \Rset^n$, up to a constant multiple. Recently, it has been found that the sharp HLS inequality \eqref{eq:HLSineq} can be proven without using symmetric rearrangement arguments; for interested readers, we refer to \cite{ccl2010,fl2010,fl2012}.

It is quite a surprise to note that the Sobolev and HLS inequalities are dual for certain families of parameters. To see this more precise, we let $\lambda = n-2s$ in \eqref{eq:HLSineq} and rewrite the right hand side of \eqref{eq:HLSineq} with ${2^{ - 2s}}{\pi ^{ - n/2}} \Gamma (n/2-s)/\Gamma (s)$, which is the Green function of the operator $(-\Delta)^s$ in $\Rset^n$ for each $s \in (0,n/2)$ to get
\begin{equation}\label{eqHLS->Sobolev}
\int_{\Rset^n} f (-\Delta)^{-s}(f) dx \leqslant \Sscr_{n,s} \|f\|_{L^{2n/(n+2s)}(\Rset^n)}^2
\end{equation}
Hence, the sharp HLS inequality implies the sharp Sobolev inequality. Further seminal works reveal that the sharp HLS inequality can also imply the Moser--Trudinger--Onofri inequality, the logarithmic HLS inequality \cite{beckner1993}, as well as the Gross logarithmic Sobolev inequality \cite{gross1975}. All these inequalities have many important applications in analysis, geometry, and quantum field theory. 

In the last two decades, HLS inequality \eqref{eq:HLSineq} has captured the attention of many mathematicians. Some remarkable extensions have already been drawn. For example, one has HLS inequalities on the upper half space $\Rset_+^n$, on Heisenberg groups, on compact Riemannian manifolds, and on weighted forms; for interested readers, we refer to \cite{dz2013,fl2012a,hanzhu,st1958}. 

Apart from these extensions, Dou and Zhu \cite{dz2014} recently discovered the following \textit{reversed HLS inequality on $\Rset^n$} which can be seen as an extension of \eqref{eq:HLSineq} for negative $\lambda$.

\begin{theorem}[reversed HLS inequality on $\Rset^n$]\label{thmMAIN}
Let $p,r\in (0,1)$ and $\lambda > 0$ such that $1/p + 1/r -\lambda /n =2$. Then there exists a positive constant $C(n,p,r)$ such that for any non-negative functions $f\in L^p(\Rset^n)$ and $g\in L^r(\Rset^n)$, we have
\begin{equation}\label{eq:RHLS}
\int_{\Rset^n} \int_{\Rset^n} f(x) |x-y|^\lambda g(y) dx dy \geqslant C(n,p,r) \|f\|_{L^p(\Rset^n)} \|g\|_{L^r(\Rset^n)}.
\end{equation}
\end{theorem}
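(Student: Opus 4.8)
\emph{Proof strategy.}
The plan is to reduce \eqref{eq:RHLS} to a one--dimensional inequality for monotone functions. One may assume $f,g\not\equiv 0$ and that the left--hand side of \eqref{eq:RHLS} is finite, since otherwise there is nothing to prove; moreover, finiteness of that double integral already forces $f,g\in L^1(\Rset^n)$ (indeed, if $f\notin L^1$, then for $y$ in a set of positive $g$--measure one has $\int_{\Rset^n}f(x)|x-y|^\lambda\,dx\geqslant\int_{|x-y|\geqslant 1}f(x)\,dx=\infty$, so the double integral diverges). Next I would symmetrize: writing $|x|^\lambda$ as the increasing limit of the bounded, radially \emph{decreasing} functions $x\mapsto M-\min(|x|^\lambda,M)$, applying the classical Riesz rearrangement inequality to $f$, $g$ and these truncated kernels, and noting that the term $M\|f\|_{L^1}\|g\|_{L^1}$ then cancels, one obtains after letting $M\to\infty$
\[
\int_{\Rset^n}\int_{\Rset^n}f(x)|x-y|^\lambda g(y)\,dx\,dy\ \geqslant\ \int_{\Rset^n}\int_{\Rset^n}f^*(x)|x-y|^\lambda g^*(y)\,dx\,dy ,
\]
where $f^*,g^*$ are the symmetric decreasing rearrangements. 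Since $\|f^*\|_{L^p}=\|f\|_{L^p}$ and $\|g^*\|_{L^r}=\|g\|_{L^r}$, it suffices to prove \eqref{eq:RHLS} for radially symmetric nonincreasing $f,g$.

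In the radial case, passing to $u=|x|$, $v=|y|$ and performing the angular integration gives
\[
\int_{\Rset^n}\int_{\Rset^n}f(x)|x-y|^\lambda g(y)\,dx\,dy = c_n\int_0^\infty\!\!\int_0^\infty f(u)\,g(v)\,k_\lambda(u,v)\,u^{n-1}v^{n-1}\,du\,dv ,
\]
with $k_\lambda(u,v)=|S^{n-1}|^{-1}\int_{S^{n-1}}|u e-v\omega|^\lambda\,d\omega$ for a fixed $e\in S^{n-1}$. Restricting the average to the hemisphere $\langle e,\omega\rangle\leqslant 0$, where $|u e-v\omega|^2\geqslant u^2+v^2$, yields the clean bound $k_\lambda(u,v)\geqslant\tfrac12\max(u,v)^\lambda\geqslant\tfrac14(u^\lambda+v^\lambda)$, which \emph{separates} the double integral:
\[
\int_{\Rset^n}\int_{\Rset^n}f(x)|x-y|^\lambda g(y)\,dx\,dy\ \geqslant\ \tfrac{c_n}{4}\,\big(A_1B_0+A_0B_1\big),\qquad A_j:=\int_0^\infty f(u)\,u^{n-1+j\lambda}\,du,\quad B_j:=\int_0^\infty g(v)\,v^{n-1+j\lambda}\,dv .
\]
Here $A_0,A_1,B_0,B_1$ are finite and positive (finiteness of $A_1,B_1$ follows since $\int f^*(x)|x|^\lambda dx\leqslant\int f(x)|x|^\lambda dx<\infty$, using $|x|^\lambda\leqslant C_\lambda(|x-y_0|^\lambda+|y_0|^\lambda)$ for a good point $y_0$). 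Using $X+Y\geqslant X^\theta Y^{1-\theta}$ for $X,Y\geqslant 0$, $\theta\in[0,1]$, one gets $A_1B_0+A_0B_1\geqslant A_1^\theta A_0^{1-\theta}\,B_0^\theta B_1^{1-\theta}$; I would take $\theta:=\dfrac{n(1/p-1)}{\lambda}$, for which the hypothesis $1/p+1/r-\lambda/n=2$ forces both $\theta\in(0,1)$ and $1-\theta=\dfrac{n(1/r-1)}{\lambda}\in(0,1)$.

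It then remains to prove the one--dimensional estimate: for every nonincreasing $h\geqslant 0$ on $(0,\infty)$,
\[
\Big(\int_0^\infty h(u)\,u^{n-1+\lambda}\,du\Big)^{\theta}\Big(\int_0^\infty h(u)\,u^{n-1}\,du\Big)^{1-\theta}\ \geqslant\ c_{n,p,\lambda}\Big(\int_0^\infty h(u)^p\,u^{n-1}\,du\Big)^{1/p} ,
\]
together with its companion in which $(\theta,p)$ is replaced by $(1-\theta,r)$; applying the first to $h=f$, the second to $h=g$, and multiplying yields \eqref{eq:RHLS} with a constant $C(n,p,r)>0$. This last step is where I expect the real work to lie. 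I would prove it via the layer--cake representation $h=\int_0^\infty\mathbf{1}_{(0,\rho(t))}\,dt$ with $\rho$ nonincreasing, which turns the three integrals into multiples of $\int_0^\infty\rho(t)^{n+\lambda}\,dt$, $\int_0^\infty\rho(t)^n\,dt$ and $\int_0^\infty\rho(t)^n t^{p-1}\,dt$; the monotonicity of $\rho$ gives the pointwise bounds $\rho(t)^n\leqslant\tfrac1t\int_0^\infty\rho^n$ and $\rho(t)^n\leqslant\big(\tfrac1t\int_0^\infty\rho^{n+\lambda}\big)^{n/(n+\lambda)}$, and splitting $\int_0^\infty\rho(t)^n t^{p-1}\,dt$ at the crossover of these two bounds and integrating each piece reproduces exactly the right--hand side, the two resulting one--variable integrals converging \emph{precisely} because $p<1$ and $p>\tfrac{n}{n+\lambda}$ --- both of which are built into $p,r\in(0,1)$ together with $1/p+1/r-\lambda/n=2$. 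Homogeneity of the estimate in $h$ and invariance under $u\mapsto\mu u$ pin down the exponents on the right as $\theta$ and $1-\theta$, which is what makes the bookkeeping close and produces the explicit (though not sharp) value of $C(n,p,r)$.
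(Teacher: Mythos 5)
Your argument is correct, but it follows a genuinely different route from the paper. The paper mimics the Lieb--Loss proof of the classical HLS inequality: it layer-cakes $f$, $g$ \emph{and} the kernel $|x-y|^\lambda$ simultaneously, proves the level-set estimate $J(a,b,c)\geqslant u(a)v(b)/2$ for $2\omega_n c^n\leqslant\max\{u(a),v(b)\}$, and then closes the argument with a reversed H\"older inequality and Jensen on the level-set integrals, producing the explicit constant \eqref{eqConstantC_npr} without any rearrangement. You instead invoke the reversed Riesz inequality (which the paper only uses later, as \eqref{eq:reversedRiesz} in the existence proof, citing Brascamp--Lieb) to reduce to radial nonincreasing $f,g$, then separate variables through the angular-average bound $k_\lambda(u,v)\geqslant\tfrac14(u^\lambda+v^\lambda)$, and finish with a one-dimensional weighted inequality for monotone functions proved by layer cake; I checked the bookkeeping there: with $T=Q^{(n+\lambda)/\lambda}P^{-n/\lambda}$ (where $P=\int\rho^{n+\lambda}$, $Q=\int\rho^n$) both pieces of $\int_0^\infty\rho(t)^nt^{p-1}dt$ come out as constants times $P^{n(1-p)/\lambda}Q^{((n+\lambda)p-n)/\lambda}$, which is exactly $\bigl(P^\theta Q^{1-\theta}\bigr)^p$ for your $\theta=n(1-p)/(\lambda p)$, and the convergence conditions $n/(n+\lambda)<p<1$ are indeed forced by the hypotheses. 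What each approach buys: the paper's proof is rearrangement-free and self-contained (only layer cake, reversed H\"older, Jensen), whereas yours is shorter and more transparent once the reversed Riesz inequality is granted, and it likewise yields an explicit (non-sharp) constant. Two small points to tidy: your one-line claim that $f\notin L^1$ forces $\int_{|x-y|\geqslant1}f=\infty$ is not literally true for every $y$ (since $p<1$ does not give local integrability, $f$ may have a non-integrable singularity at $y$ itself); it does hold for a.e.\ $y$, or one can argue with two distinct points $y_1\neq y_2$ at which $\int f(x)|x-y_i|^\lambda dx<\infty$, covering $\Rset^n$ by $\{|x-y_1|\geqslant\varepsilon\}\cup\{|x-y_2|\geqslant\varepsilon\}$ with $\varepsilon<|y_1-y_2|/2$. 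Also, the finiteness of $A_1,B_1$ is not actually needed for a lower bound, so that digression (and the comparison $\int f^*|x|^\lambda\leqslant\int f|x|^\lambda$) can be dropped.
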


Note that in \cite[Theorem 1.1]{dz2014}, the authors require $p, r \in (n/(n+\lambda),1)$ instead of $p,r \in (0,1)$ as shown above. However, by resolving the condition $1/p + 1/r -\lambda /n =2$, it is not hard to see that indeed $p,r$ must satisfy $p, r \in (n/(n+\lambda),1)$. Hence, it is safe to assume $p,r \in (0,1)$. Concerning inequality \eqref{eq:RHLS}, it is worth noting that it has been applied to solve some curvature equations with negative critical Sobolev exponents by Zhu in \cite{zhu2014}. As can be easily seen, the proof given in \cite{dz2014} is purely based on an extension of the classical Marcinkiewicz interpolation theorem applying to the singular integral operator defined by
$$(I_\lambda f)(x) = \int_{\Rset^n} f(y) |x-y|^\lambda dy.$$
It was proven in \cite{dz2014} that $I_\lambda f$ fulfills the following estimate
$$\|I_\lambda f\|_{L^q(\Rset^n)} \geqslant \Cscr \|f\|_{L^p(\Rset^n)},$$
for some constant $\Cscr>0$ where $q =r/(r-1) \in (-\infty,0)$.

The primary aim of this paper is to provide an alternative proof for the reversed HLS inequality \eqref{eq:RHLS} which follows the standard idea in the proof of the classical HLS inequality \eqref{eq:HLSineq} given in \cite{liebloss2001}. This alternative proof is more concise than that of Dou and Zhu and does not use the Marcinkiewicz-type interpolation technique. As we shall see later, our proof also gives us an explicit bound from below for the constant $C(n,p,r)$ in \eqref{eq:RHLS}; see \eqref{eqConstantC_npr} for details.

Once we establish Theorem \ref{thmMAIN}, it is natural to ask whether or not the optimal functions for the reversed HLS inequality \eqref{eq:RHLS} exist. For this purpose, we will turn our attention to consider the following minimizing problem
\begin{equation}\label{eq:variationalprob}
\Cscr_{n,p,r} :=\inf_f \big\{ \|I_\lambda f\|_{L^q(\Rset^n)} : f\geqslant 0, \|f\|_{L^p(\Rset^n)} =1 \big\}.
\end{equation}
Obviously, $\Cscr_{n,p,r} \geqslant 0$ and is finite. In addition, we can easily verify that optimal functions for the reversed HLS inequality \eqref{eq:RHLS} are those solving the problem \eqref{eq:variationalprob}. 

The existence of optimal functions for \eqref{eq:variationalprob} was proven by Dou and Zhu \cite{dz2014} for the diagonal case $p=r=2n/(2n+\lambda)$. To establish such a result, the authors follows the idea in \cite{l1983}, which is based on rearrangement arguments.% together with the usage of the stereographic projection to derive an equivalent form of \eqref{eq:RHLS} on the sphere $\mathbb S^n$ and exploiting the compactness of the sphere $\mathbb S^n$ to obtain the existence of a minimizer for \eqref{eq:variationalprob}. 

In this paper, we will also address the existence of optimal functions for \eqref{eq:variationalprob}, however, in full generality of parameters by relaxing the restriction $p=r=2n/(2n+\lambda)$; that is, we consider \eqref{eq:variationalprob} for all $p,r\in (0,1)$ satisfying $1/p + 1/r -\lambda /n =2$. We will also show that, up to a translation, all optimal functions of \eqref{eq:variationalprob} are radially symmetric and strictly decreasing. We shall prove the following result.

\begin{theorem}\label{Existence}
There exists some non-negative function $f\in L^p(\Rset^n)$ such that $\|f\|_{L^p(\Rset^n)} =1$ and $\|I_\lambda f\|_{L^q(\Rset^n)} = \Cscr_{n,p,r}$. Moreover, if $f$ is a minimizer of \eqref{eq:variationalprob} then there exist a non-negative, strictly decreasing function $h$ on $[0,\infty)$ and some $x_0\in \Rset^n$ such that $f(x) = h(|x+x_0|)$ a.e. $x\in \Rset^n$.
\end{theorem}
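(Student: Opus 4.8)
The plan is to establish existence via the direct method applied to the variational problem \eqref{eq:variationalprob}, and then to deduce radial symmetry of any minimizer via a rearrangement argument together with an analysis of the equality case. For the existence part, I would take a minimizing sequence $\{f_k\}$ of non-negative functions with $\|f_k\|_{L^p(\Rset^n)}=1$ and $\|I_\lambda f_k\|_{L^q(\Rset^n)}\to\Cscr_{n,p,r}$. Because the functional $f\mapsto\|I_\lambda f\|_{L^q}$ and the constraint are invariant under the scaling $f\mapsto\tau^{n/p}f(\tau\,\cdot)$ and under translations, one must first \emph{normalize} the sequence to prevent loss of compactness by concentration or spreading: I would replace each $f_k$ by a suitable rescaled and translated copy so that, say, $\int_{B_1(0)}f_k^p\,dx$ equals a fixed fraction of the total mass. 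After this normalization one extracts a weak limit $f$ (in $L^p$, or in the sense of measures after truncation since $p<1$ makes $L^p$ non-reflexive — this is a delicate point, see below). The key inequality to pass to the limit is the \emph{reverse} direction: since the kernel $|x-y|^\lambda$ is \emph{superadditive-like} and $q<0$, one expects $\|I_\lambda f\|_{L^q}\geqslant \limsup_k\|I_\lambda f_k\|_{L^q}$ by a Fatou-type / weak lower-continuity argument adapted to negative exponents, while the constraint satisfies $\|f\|_{L^p}^p\geqslant$ the limiting mass. One then rules out mass escaping to infinity using the normalization, concluding $\|f\|_{L^p}=1$ and $f$ is a minimizer.

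For the symmetry part, I would invoke the Riesz rearrangement inequality in its form suited to an \emph{increasing} kernel: for the kernel $|x-y|^\lambda$ with $\lambda>0$, one has
\[
\int_{\Rset^n}\int_{\Rset^n} f(x)|x-y|^\lambda g(y)\,dx\,dy \;\geqslant\; \int_{\Rset^n}\int_{\Rset^n} f^*(x)|x-y|^\lambda g^*(y)\,dx\,dy
\]
where $f^*$, $g^*$ denote the symmetric-decreasing rearrangements, i.e., symmetrization \emph{decreases} the bilinear form because the kernel is increasing (this is the reverse of the classical situation). Equivalently, in operator form, $\|I_\lambda(f^*)\|_{L^q}\leqslant\|I_\lambda f\|_{L^q}$ for $q<0$; combined with $\|f^*\|_{L^p}=\|f\|_{L^p}$ this shows that if $f$ is a minimizer then so is $f^*$. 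To upgrade to the stated conclusion — that $f$ itself is, up to translation, a radially symmetric strictly decreasing function — I would appeal to the equality case in the Riesz rearrangement inequality (Lieb's characterization, as in \cite{l1983}): equality forces $f$ to be a translate of a radially symmetric decreasing function. Strict monotonicity then follows from the Euler--Lagrange equation: a minimizer satisfies an integral equation of the form $f(x)^{p-1}=\mu\int_{\Rset^n}(I_\lambda f)(y)^{q-1}|x-y|^\lambda\,dy$ for some Lagrange multiplier $\mu>0$, and the right-hand side, being an integral of a strictly positive continuous density against the strictly increasing kernel $|x-y|^\lambda$, is a strictly increasing function of $|x|$; since $p-1<0$, $f$ is strictly decreasing in $|x|$.

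The main obstacle I anticipate is the compactness step, precisely because $p\in(0,1)$: the space $L^p(\Rset^n)$ is not locally convex, balls are not weakly compact, and one cannot directly extract weakly convergent subsequences. I would circumvent this by working with the associated measures $d\mu_k=f_k^p\,dx$, which have uniformly bounded mass, extracting a weak-$*$ limit $\mu$, and then showing (i) that the concentration-compactness dichotomy applies, (ii) that after the normalization neither vanishing nor dichotomy can occur for a minimizing sequence — here the strict subadditivity of $\Cscr_{n,p,r}$ under splitting of mass, which follows from the strict form of the reversed HLS inequality, is the crucial input — and (iii) that the non-vanishing limit measure is absolutely continuous with density $f^p$ for some $f\in L^p$ that is an admissible competitor. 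A secondary technical point is justifying the lower semicontinuity $\|I_\lambda f\|_{L^q}\geqslant\limsup\|I_\lambda f_k\|_{L^q}$ with a negative exponent; I would handle it by writing $\|I_\lambda f\|_{L^q}^q=\int (I_\lambda f)^q$, noting $q<0$ so that $t\mapsto t^q$ is convex and decreasing on $(0,\infty)$, using that $I_\lambda f_k\to I_\lambda f$ pointwise (or a.e.) along a subsequence by the truncation-and-tightness argument, and applying Fatou's lemma to the non-negative integrands $(I_\lambda f_k)^q$ to get $\int(I_\lambda f)^q\leqslant\liminf\int(I_\lambda f_k)^q$, which upon taking the $1/q$-th power (reversing the inequality) yields exactly what is needed.
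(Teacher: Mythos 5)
Your symmetry argument is essentially the paper's: the reversed Riesz rearrangement inequality (Lemma \ref{lemmaexistence}, from Brascamp--Lieb) plus its equality case, and that part is fine. The existence half, however, has a genuine gap, and it is the direction of the limit inequality. Since \eqref{eq:variationalprob} is a \emph{minimization}, to conclude that the limit $f$ is a minimizer you need $\|f\|_{L^p}=1$ together with $\|I_\lambda f\|_{L^q}\leqslant \liminf_k\|I_\lambda f_k\|_{L^q}=\Cscr_{n,p,r}$; the inequality you set out to prove, $\|I_\lambda f\|_{L^q}\geqslant \limsup_k\|I_\lambda f_k\|_{L^q}$, holds automatically for every admissible competitor (it is the definition of the infimum) and carries no information. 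Your Fatou computation does indeed produce only that vacuous direction: with $q<0$ the useful statement is $\int(I_\lambda f)^q\,dx\geqslant\lim_k\int(I_\lambda f_k)^q\,dx$, and Fatou applied to the non-negative functions $(I_\lambda f_k)^q$ gives ``$\leqslant$''. To get ``$\geqslant$'' one needs a \emph{reverse} Fatou, i.e.\ a fixed integrable majorant for $(I_\lambda f_k)^q$, which amounts to a uniform lower barrier of the type $I_\lambda f_k(x)\geqslant c\,(1+|x|^\lambda)$ with $c>0$ independent of $k$ --- and such a barrier is exactly what fails along a spreading (vanishing) minimizing sequence, so it can only be obtained after the scaling normalization and a no-vanishing lemma. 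In addition, the premise of your Fatou step, $I_\lambda f_k\to I_\lambda f$ a.e., is unjustified: a.e.\ convergence of $f_k$ with $\|f_k\|_{L^p}=1$ does not yield convergence of $\int|x-y|^\lambda f_k(y)\,dy$, because the kernel grows at infinity; some uniform tail control on $f_k$ (the paper proves $\int f_j\leqslant C$ and $f_j(r)\lesssim r^{-n-\lambda}$ for large $r$) is indispensable, and even then the paper avoids proving full convergence of the potentials, working instead with monotone convergence of $(I_\lambda f_j)^{-1}$ and the one-sided Fatou bound $k\leqslant (I_\lambda f_0)^{-1}$.

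Beyond this, the concentration--compactness machinery you propose is asserted rather than proved, and it is not innocuous here: $p<1$ (so $L^p$ has no weak compactness and the constraint set is non-convex), $q<0$, and the kernel is increasing, so the standard splitting/strict-subadditivity estimates do not transfer; note also that by $1$-homogeneity the infimum with mass $m$ is exactly $m\,\Cscr_{n,p,r}$, so strictness must come from interaction terms, which you do not estimate. The paper sidesteps all of this: by Lemma \ref{lemmaexistence} the rearranged sequence $f_j^\star$ is again minimizing, so one may take radial non-increasing $f_j$, use Helly's selection theorem for a.e.\ convergence, rule out vanishing by Lemma \ref{farawayzero} and a dilation normalization $f_j(1)>c_0$ (which yields the lower barrier $I_\lambda f_j\gtrsim 1+|x|^\lambda$), and then recover $\|f_0\|_{L^p}=1$ through the uniform decay estimate and a truncation argument. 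If you want to keep your plan, the two missing ingredients you must supply are precisely this uniform lower barrier for the potentials and the proof that no $L^p$ mass is lost in the limit.
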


Let us now consider the diagonal case $p=r=2n/(2n+\lambda)$ for which the sharp constant $\Cscr_{n,p,r}$ can be explicitly computed. Inspired by \cite[Theorem 1.2']{dz2014}, we will prove the following sharp reversed HLS inequality.

\begin{theorem}\label{thmOPTIMAL}
Let $\lambda > 0$, then for any non-negative functions $f\in L^{2n/(2n+\lambda)} (\Rset^n)$ and $g\in L^{2n/(2n+\lambda)} (\Rset^n)$ we have
\begin{equation}\label{eq:RHLS-diagonal}
\int_{\Rset^n} \int_{\Rset^n} f(x) |x-y|^\lambda g(y) dx dy \geqslant \Cscr_{n,\lambda} \|f\|_{L^{2n/(2n+\lambda)} (\Rset^n)} \|g\|_{L^{2n/(2n+\lambda)} (\Rset^n)}.
\end{equation}
where
\[
\Cscr_{n,\lambda} ={\pi ^{\lambda /2}}\frac{{\Gamma (n/2 - \lambda /2)}}{{\Gamma (n - \lambda /2)}}{\left( {\frac{{\Gamma (n)}}{{\Gamma (n/2)}}} \right)^{1-\lambda /n}} .
\]
with constant $\Cscr_{n,\lambda}$ sharp.
\end{theorem}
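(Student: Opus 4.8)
The plan is to read off Theorem~\ref{thmOPTIMAL} from the variational problem \eqref{eq:variationalprob} in the diagonal case $p=r=2n/(2n+\lambda)$, for which $q=r/(r-1)=-2n/\lambda<0$. First I would observe that for non-negative $f,g$ the reversed H\"older inequality gives
\[
\int_{\Rset^n}\!\!\int_{\Rset^n} f(x)\,|x-y|^\lambda\,g(y)\,dx\,dy=\int_{\Rset^n}(I_\lambda f)(y)\,g(y)\,dy\geqslant \|I_\lambda f\|_{L^q(\Rset^n)}\,\|g\|_{L^r(\Rset^n)},
\]
with equality precisely when $g$ is proportional to $(I_\lambda f)^{q-1}$; hence the best constant in \eqref{eq:RHLS-diagonal} is exactly $\Cscr_{n,p,r}$ of \eqref{eq:variationalprob}, and it remains to compute it and to exhibit an extremal pair. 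As the quotient in \eqref{eq:variationalprob} is invariant under dilations, translations and multiplication of $f$ by positive constants, it is enough to do this for a single, conveniently normalised extremal.

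To locate the extremals I would invoke Theorem~\ref{Existence}: a non-negative minimiser $f$ of \eqref{eq:variationalprob} exists and, after a translation, is radial and strictly decreasing. Standard arguments then give $f>0$ on $\Rset^n$ and that $u:=I_\lambda f$ is finite and continuous, so the Euler--Lagrange equations of \eqref{eq:variationalprob} make pointwise sense; with the companion function $g\propto(I_\lambda f)^{q-1}$ that realises equality in the reversed H\"older step and $v:=I_\lambda g$, they read as the coupled integral system $u=c_1\,I_\lambda(v^{-\kappa})$, $v=c_2\,I_\lambda(u^{-\kappa})$ with $\kappa=(2n+\lambda)/\lambda>1$. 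This system is symmetric in $u$ and $v$, so it suffices to classify the positive solutions of the single equation
\[
u(x)=\int_{\Rset^n}|x-y|^\lambda\,u(y)^{-\kappa}\,dy ,
\]
which is covariant, with conformal weight $\lambda/2$, under translations, dilations and Kelvin transforms $x\mapsto x/|x|^2$.

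The heart of the matter---and the step I expect to be the main obstacle---is the classification of the positive solutions of this equation. I would carry it out by the \emph{method of moving spheres}: for each centre one compares $u$ with its Kelvin reflection across spheres of varying radius about that centre, uses the conformal covariance above to check that the comparison can be started from large radii and cannot stop at any finite radius, and concludes that $u$ is invariant under one such Kelvin transform; this rigidity forces $u$ into the standard family, equivalently
\[
f(x)=c_1\bigl(a^2+|x-x_0|^2\bigr)^{-(2n+\lambda)/2},\qquad a>0,\ x_0\in\Rset^n,\ c_1>0 .
\]
The difficulty is twofold: the moving-sphere comparison has to be carried through with the growing, non-local kernel $|x-y|^\lambda$, and the a priori decay and regularity of $u$ presupposed by the method must be established beforehand. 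The radial symmetry furnished by Theorem~\ref{Existence} lightens the bookkeeping but does not remove the difficulty, since $I_\lambda$ admits no ODE reduction for non-even $\lambda$---the case $\lambda=2$, where $I_2 f$ is a quadratic polynomial in $x$ and the conclusion is elementary, being the only exception.

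Finally I would compute $\Cscr_{n,p,r}$ on the normalised extremal $f(x)=(1+|x|^2)^{-(2n+\lambda)/2}$. The only nontrivial ingredient is the identity
\[
\int_{\Rset^n}|x-y|^\lambda\,(1+|y|^2)^{-(2n+\lambda)/2}\,dy=\frac{\pi^{n/2}\,\Gamma\bigl((n+\lambda)/2\bigr)}{\Gamma\bigl(n+\lambda/2\bigr)}\,(1+|x|^2)^{\lambda/2},
\]
which I would obtain from the rotational invariance of $\xi\mapsto\int_{S^n}|\xi-\eta|^\lambda\,d\sigma(\eta)$ together with the transformation laws of $|x-y|^\lambda$ and of $dx$ under stereographic projection, and whose value at $x=0$ is the Beta integral $\tfrac12\,|S^{n-1}|\,B\bigl((n+\lambda)/2,\,n/2\bigr)$. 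Combining this with $\int_{\Rset^n}(1+|x|^2)^{-n}\,dx=\pi^{n/2}\Gamma(n/2)/\Gamma(n)$ and the relations $\lambda q/2=-n$, $(2n+\lambda)p/2=n$ and $1/q-1/p=-(n+\lambda)/n$, a routine bookkeeping of Gamma factors gives the asserted value $\Cscr_{n,\lambda}$ for $\|I_\lambda f\|_{L^q(\Rset^n)}\,\|f\|_{L^p(\Rset^n)}^{-1}$. Sharpness follows at once, since the bubble above is admissible both in \eqref{eq:variationalprob} and in the reversed H\"older step, so that equality holds in \eqref{eq:RHLS-diagonal} when $f$ and $g$ are a common translate and dilate of it.
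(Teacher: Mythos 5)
Your overall route is the same as the paper's: existence of a radial, strictly decreasing minimiser from Theorem \ref{Existence}, the Euler--Lagrange system for the extremal pair, classification of its positive solutions by the method of moving spheres, and evaluation of the constant on the resulting bubble (the paper deduces Theorem \ref{thmOPTIMAL} from Theorem \ref{thmCLASSIFICATION} and refers to \cite[Section 3.2.2]{dz2014} for the computation you carry out explicitly; your stereographic identity and Beta-integral evaluation are correct). However, there is a genuine gap in the reduction step: from ``the system $u=c_1I_\lambda(v^{-\kappa})$, $v=c_2I_\lambda(u^{-\kappa})$ is symmetric in $u$ and $v$'' you conclude that it suffices to classify the single equation $u=I_\lambda(u^{-\kappa})$. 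Symmetry of a system does not preclude non-symmetric solutions, and nothing in your argument shows a priori that the extremal pair satisfies $u\equiv v$. The paper does not make this reduction: the a priori bounds $u,v\simeq 1+|x|^p$ (Lemma \ref{lem-Growth}), the smoothness (Lemma \ref{lem-Regularity}), the exponent restriction (Proposition \ref{pro-RangeForQ}) and the moving-sphere comparison (Lemmas \ref{lem0}--\ref{lemLamdaVanishAtEveryPoint}) are all run \emph{simultaneously} on the pair $(u,v)$, whose reflected functions are intertwined through the kernel identities of Lemma \ref{lem1}; only after both $u$ and $v$ are shown to be bubbles is $u\equiv v$ deduced, by comparing with the single-equation identity of \cite[Appendix A]{l2004}. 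So either formulate your moving-sphere argument for the coupled system (which is essentially what your sketch would become, and is what the paper does), or supply a separate proof that $u\equiv v$ before passing to one equation; as written the step is unjustified.

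A second point concerns the final bookkeeping, which you assert ``gives the asserted value $\Cscr_{n,\lambda}$.'' With $f(x)=(1+|x|^2)^{-(2n+\lambda)/2}$, your own identities yield $\|I_\lambda f\|_{L^q}\,\|f\|_{L^p}^{-1}=\pi^{-\lambda/2}\,\frac{\Gamma((n+\lambda)/2)}{\Gamma(n+\lambda/2)}\bigl(\frac{\Gamma(n)}{\Gamma(n/2)}\bigr)^{1+\lambda/n}$, i.e.\ the classical HLS constant with $\lambda$ replaced by $-\lambda$. This does not literally coincide with the displayed expression for $\Cscr_{n,\lambda}$ in the statement (which, as printed, is the classical $\Nscr_{n,\lambda}$ and has poles at $\lambda=n,n+2,\dots$), so do not wave this off as routine: state the value your computation actually produces and reconcile the sign convention explicitly.
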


As a consequence of Theorem \ref{eq:RHLS-diagonal} and inspired by \cite{CarlenLoss92}, we will formally derive a  \textit{reversed log-HLS inequality}. It is clear that the existence of an optimal function pair for \eqref{eq:RHLS-diagonal} follows from Theorem \ref{Existence}. Moreover, if $(f,g)$ is an optimal function pair of \eqref{eq:RHLS-diagonal} then, up to a translation, $f$ and $g$ are radially symmetric and strictly decreasing by means of Lemma \ref{lemmaexistence}. By simple calculation, up to a multiplicative constant, the pair $(f,g)$ must satisfy the following system
\begin{equation}\label{eq:Euleur}
\left\{
\begin{split}
|f(x)|^{-\lambda/(2n+\lambda)-1} f(x) = \int_{\Rset^n} |x-y|^{\lambda} g(y) dy,\\
|g(y)|^{-\lambda/(2n+\lambda)-1} g(x) = \int_{\Rset^n} |x-y|^{\lambda} f(x) dx;
\end{split}
\right.
\end{equation}
see Section \ref{secCLASSIFICATION}. Let $u =|f|^{-\lambda/(2n+\lambda)-1} f$ and $v = |g|^{-\lambda/(2n+\lambda)-1}g$ in \eqref{eq:Euleur}, it leads us to study positive solutions of the following system of integral equations
\begin{equation}\label{eq:systemDouZhu}
\left\{
\begin{split}
u(x) &= \int_{\Rset^n} |x-y|^\lambda v(y)^\kappa dy,\\
v(y) &= \int_{\Rset^n} |x-y|^\lambda u(x)^\kappa dx,
\end{split}
\right.
\end{equation}
in $\Rset^n$ where we denote $\kappa = -(2n+\lambda)/\lambda < 0$. Note that the integral system \eqref{eq:systemDouZhu} is well-known to be conformal invariant; hence, one can adopt the method of moving spheres  to classify measurable solutions of \eqref{eq:systemDouZhu}. 

In the literature, the method of moving spheres, introduced by Li and Zhu in \cite{lz1995}, is a variant of the well-known method of moving planes, introduced by Aleksandrov in \cite{a1958}. For interested readers, we refer to \cite{s1971, gnn1979, cgs1989, cl1991, clo2005, clo2006} for the method of moving planes and its variants, while for the method of moving spheres we refer to \cite{l2004, xu2005}. 

In the last part of Dou and Zhu's work \cite{dz2014}, the authors showed that any non-negative, measurable solution $(u,v)$ of \eqref{eq:systemDouZhu} must be of the following form
$$u(x) = v(x) = (1+|x|^2)^{\lambda/2},$$
up to translations and dilations.

Motivated by the above classification, in the last part of this paper, we will also classify solutions of integral systems of the form \eqref{eq:systemDouZhu} where $\kappa$ is no longer $-(2n+\lambda)/\lambda$. To be precise, we are interested in the classification of non-negative, measurable functions of the following system
\begin{equation}\label{eqIntegralSystem}
\left\{
\begin{split}
u(x) &= \int_{\Rset^n} |x-y|^p v(y)^{-q} dy,\\
v(x) &= \int_{\Rset^n} |x-y|^pu(y)^{-q} dy,
\end{split}
\right.
\end{equation}
in $\Rset^n$ with $p, q>0$. We shall prove the following result.

\begin{theorem}\label{thmCLASSIFICATION}
For $n \geqslant 1$, $p>0$ and $q>0$, let $(u,v)$ be a pair of non-negative Lebesgue measurable functions in $\Rset^n$ satisfying \eqref{eqIntegralSystem}. Then $q=1+2n/p$ and, for some constants $a, b>0$ and some $\overline x \in \Rset^n$, $u$ and $v$ take the following form
\[
u(x)=v(x) = a(b^2 + |x-\overline x|^2)^{p/2}
\]
for any $x \in \Rset^n$.
\end{theorem}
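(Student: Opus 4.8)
\textit{Proof proposal.} My plan is to prove Theorem~\ref{thmCLASSIFICATION} by the method of moving spheres applied to the coupled system \eqref{eqIntegralSystem}, in the spirit of Li--Zhu \cite{lz1995} and Li \cite{l2004}. The first step is preliminary: I would show that $u$ and $v$ are everywhere positive and continuous and pin down their growth at infinity. Using the elementary inequalities $|x-y|^p\le 2^{\max\{1,p\}}(|x|^p+|y|^p)$ and $|x-y|^p\ge(|x|/2)^p$ for $|y|\le|x|/2$ in \eqref{eqIntegralSystem}, one obtains $c_1|x|^p\le u(x),v(x)\le c_2(1+|x|^p)$ for $|x|$ large; dominated convergence then gives $u(x)/|x|^p\to\int_{\Rset^n}v^{-q}$ and $v(x)/|x|^p\to\int_{\Rset^n}u^{-q}$ as $|x|\to\infty$. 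The convergence of these integrals (needed for \eqref{eqIntegralSystem} to make sense) together with the growth $u,v\asymp|x|^p$ forces $pq>n$; in particular $u^{-q},v^{-q}\in L^1(\Rset^n)$ with decay $|x|^{-pq}$, and neither $u$ nor $v$ can be a nonzero constant.

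Next I would introduce, for $x_0\in\Rset^n$ and $\mu>0$, the Kelvin-type transforms
\[
u_{x_0,\mu}(x)=\Big(\tfrac{|x-x_0|}{\mu}\Big)^{p}u\big(x_0+\tfrac{\mu^2(x-x_0)}{|x-x_0|^2}\big),\qquad
v_{x_0,\mu}(x)=\Big(\tfrac{|x-x_0|}{\mu}\Big)^{p}v\big(x_0+\tfrac{\mu^2(x-x_0)}{|x-x_0|^2}\big),
\]
which by the first step are positive, continuous, and extend continuously to $x_0$. The inversion identity for $|x-y|$ together with the substitution $y\mapsto x_0+\mu^2(y-x_0)/|y-x_0|^2$ in \eqref{eqIntegralSystem} shows that $(u_{x_0,\mu},v_{x_0,\mu})$ solves the same system up to the factor $\mu^{p+2n-pq}$ and the weight $(|y-x_0|/\mu)^{pq-p-2n}$, both of which are trivial exactly when $q=1+2n/p$. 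Since $y\mapsto x_0+\mu^2(y-x_0)/|y-x_0|^2$ is an involution interchanging $B_\mu(x_0)$ with its exterior and fixes $\partial B_\mu(x_0)$, where $u_{x_0,\mu}=u$ and $v_{x_0,\mu}=v$, subtracting from \eqref{eqIntegralSystem} and splitting each integral over $B_\mu(x_0)$ and its complement yields a coupled pair of integral inequalities on $B_\mu(x_0)$ with \emph{strictly positive} kernels, comparing $u-u_{x_0,\mu}$ with $v_{x_0,\mu}^{-q}-v^{-q}$ and symmetrically.

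With this apparatus I would run the usual moving-spheres scheme, the drift introduced by the weight being kept under control via the asymptotics of the first step. For every $x_0$ the comparison $u_{x_0,\mu}\le u$, $v_{x_0,\mu}\le v$ on $B_\mu(x_0)$ holds for all small $\mu$; letting $\overline\mu(x_0)$ be the supremal such radius, one sees $\overline\mu(x_0)<\infty$ because, fixing $x$ and letting $\mu\to\infty$, $u_{x_0,\mu}(x)\sim\big(\int v^{-q}\big)\mu^p\to\infty$ whereas $u_{x_0,\mu}(x)\le u(x)<\infty$; and at the critical radius a strong-maximum-principle argument for the positive-kernel inequalities upgrades the comparison to the equalities $u_{x_0,\overline\mu(x_0)}\equiv u$, $v_{x_0,\overline\mu(x_0)}\equiv v$ on $\Rset^n$. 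Thus for \emph{every} $x_0$ there is $\mu(x_0)>0$ with $u_{x_0,\mu(x_0)}\equiv u$ and $v_{x_0,\mu(x_0)}\equiv v$, so by the calculus lemma of Li--Zhu type one gets $u(x)=a_1(b_1^2+|x-\overline x_1|^2)^{p/2}$ and $v(x)=a_2(b_2^2+|x-\overline x_2|^2)^{p/2}$ for positive $a_i,b_i$ and points $\overline x_1,\overline x_2$ (the constant alternative being excluded by the first step). Plugging these into \eqref{eqIntegralSystem} and invoking the reproducing identity
\[
\int_{\Rset^n}|x-y|^p\big(b^2+|y|^2\big)^{-(n+p/2)}\,dy=C_{n,p}\,\big(b^2+|x|^2\big)^{p/2},
\]
which I would prove by transporting the integral to the round sphere $S^n$ via stereographic projection, one is forced to have $pq/2=n+p/2$, i.e.\ $q=1+2n/p$, as well as $\overline x_2=\overline x_1=:\overline x$ and $b_1=b_2$; the two resulting relations between $a_1$ and $a_2$ then give $a_1=a_2=:a$ (using $q\ne1$, which holds since $q=1+2n/p>1$) and determine $a$ in terms of $b:=b_1$, yielding $u(x)=v(x)=a(b^2+|x-\overline x|^2)^{p/2}$.

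The main obstacle is the moving-spheres step. Unlike in the classical conformally invariant problems, here $u$ and $v$ \emph{grow} like $|x|^p$ rather than decaying, so the comparison cannot be initiated or propagated using boundedness; one must work throughout with the sharp asymptotics of the first step, and the weight $(|y-x_0|/\mu)^{pq-p-2n}$ produced off the critical exponent must be tracked carefully, both to show it is harmless at $q=1+2n/p$ and to extract the necessity of this exponent. Establishing the strong-maximum-principle property of the positive-kernel integral inequalities at the critical radius is the other technical ingredient.
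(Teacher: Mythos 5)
Your overall strategy (growth asymptotics, Kelvin-type transforms, moving spheres, then the Li--Zhu calculus lemma) is essentially the paper's, and two of your shortcuts are genuinely attractive: you get $\overline\mu(x_0)<\infty$ at \emph{every} $x_0$ directly from the asymptotics $u(z)\sim\big(\int_{\Rset^n} v^{-q}\big)|z|^p$, whereas the paper first excludes $\overline\lambda\equiv\infty$ via Lemma \ref{lemKey1} and then transfers finiteness by Lemma \ref{lemLamdaVanishAtEveryPoint}; and your interior formulation of the comparison on $B_\mu(x_0)$ is equivalent to the paper's exterior one.

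However, there is a genuine gap at the heart of the moving-spheres step. When $q\neq 1+2n/p$ the transformed pair solves the system only up to the weight $(\mu/|z-x_0|)^{2n-pq+p}$, and the difference representation (the paper's Lemma \ref{lem1}) compares $v(z)^{-q}$ with $(\mu/|z-x_0|)^{2n-pq+p}v_{x_0,\mu}(z)^{-q}$ over the exterior region $|z-x_0|\geqslant\mu$. Both the ``move the sphere slightly past the critical radius'' contradiction and the rigidity analysis at $\overline\mu(x_0)$ close only if $2n-pq+p\geqslant 0$, i.e. $q\leqslant 1+2n/p$, because only then is the weight at most $1$ on the exterior and the bracketed term has the favorable sign; if $q>1+2n/p$ the weight exceeds $1$ and the comparison does not propagate. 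Saying the weight ``must be tracked carefully'' does not resolve this: the paper needs the separate a priori ingredient, Proposition \ref{pro-RangeForQ}, which excludes $q>1+2n/p$ by invoking (under the contradiction hypothesis) the necessity results of Hua--Yu and Lei before the spheres can be moved at all, and your proposal offers no substitute. Relatedly, your plan to extract $q=1+2n/p$ only at the end by inserting the bubbles into \eqref{eqIntegralSystem} is both too late (reaching the bubble form already required the sign information) and unsupported as stated: the reproducing identity you quote is precisely the critical-exponent case, so you would still have to prove that for $pq\neq p+2n$ no pair of bubbles solves the system, whereas in the paper the identity $2n-pq+p=0$ is forced inside the equality case of Lemma \ref{lem3}.
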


Once we prove Theorem \ref{thmCLASSIFICATION}, we can go back to prove Theorem \ref{thmOPTIMAL} and obtain the sharp constant $\Cscr_{n, \lambda}$. Before closing this section, it is worth noting that in our next article \cite{NgoNguyen2015}, we will perform the same study for the case of the half space $\Rset_+^n$.

\section{The reversed HLS inequality on $\Rset^n$: Proof of Theorem \ref{thmMAIN}}

In this section, we provide an alternative proof of the reversed HLS inequality \eqref{eq:RHLS}. As mentioned before, our proof here is completely different from the one in \cite{dz2014} which mimics the same idea from the proof of the classical HLS inequality given in \cite{liebloss2001}.

In order to prove \eqref{eq:RHLS}, we first set up some notation and conventions. For each point $x\in \Rset^n$, let us denote
\[
B_c(x) = \{y\in \Rset^n\, :\, |y-x| \leqslant c\}.
\]
In the special case $x = 0$, we simply denote $B_c(0)$ by $B_c$; hence $B_c = \{y\in \Rset^n\, :\, |y| \leqslant c\}$. For $a,b,c > 0$, we denote 
$$u(a) = |\{f > a\}|,\quad v(b) = |\{g > b\}|,$$
where $|A|$ denotes the Lebesgue measure of the measurable subset $A\subset \Rset^n$. By homogeneity, we can normalize $f$ and $g$ in such a way that $\|f\|_{L^p(\Rset^n)}= \|g\|_{L^r(\Rset^n)}=1$. Therefore, we have
$$p\int_0^\infty a^{p-1} u(a) da = \|f\|_{L^p(\Rset^n)}^p = 1$$
and 
$$r \int_0^\infty b^{r-1} v(b) db = \|g\|_{L^r(\Rset^n)}^r = 1.$$
For simplicity, we denote
$$I(f,g) = \int_{\Rset^n} \int_{\Rset^n} f(x) |x-y|^\lambda g(y) dx dy.$$
The layer cake representation \cite[Theorem 1.13]{liebloss2001} implies that
$$f(x) = \int_0^\infty \chi_{\{f> a\}}(x) da,\quad g(y) = \int_0^\infty \chi_{\{g> b\}}(y) dy,$$
and
$$|x-y|^{\lambda} = \lambda \int_0^\infty c^{\lambda-1} \chi_{\{\Rset^n\setminus B_c\}}(x-y) dc.$$
For simplicity, we also denote
\[
J(a,b,c) = \int_{\Rset^n} \int_{\Rset^n} \chi_{\{f > a\}}(x) \chi_{\{\Rset^n\setminus B_c\}}(x-y) \chi_{\{g> b\}}(y) dx dy.
\]
Then the Fubini theorem tells us that
\begin{equation}\label{eq:step1}
I(f,g) = \lambda \int_0^\infty\int_0^\infty\int_0^\infty c^{\lambda-1} J(a,b,c) da db dc.
\end{equation}

\noindent\textbf{Step 1}. Our first step to prove \eqref{eq:RHLS} is to claim the following: There holds
\begin{equation}\label{eq:claim}
J(a,b,c) \geqslant  u(a)v(b)/2,
\end{equation}
for any $c$ satisfying 
\[
2\omega_n c^n \leqslant \max\{u(a), v(b)\},
\]
where $\omega_n$ denotes the volume of $B_1$. To verify \eqref{eq:claim}, we let $u(a) \geqslant v(b)$, then
\begin{align*}
J(a,b,c) &= \int_{\Rset^n} \chi_{\{g> b\}}(y) \, |\{f > a\} \cap (\Rset^n\setminus B_c(y) )| dy \\
&= \int_{\Rset^n} \chi_{\{g> b\}}(y) \, (|\{f > a\}| -|\{f > a\} \cap B_c(y)|) dy \\
&\geqslant \int_{\Rset^n} \chi_{\{g> b\}}(y) \, (u(a) -|B_c(y)|) dy \\
&=\int_{\Rset^n} \chi_{\{g> b\}}(y) \, (u(a) -c^n \omega_n) dy \\
&\geqslant u(a) v(b)/2.
\end{align*}
Repeating the same argument shows that $J(a,b,c) \geqslant u(a) v(b)/2$ given $v(b) \geqslant u(a)$; this is enough to conclude \eqref{eq:claim}.

\medskip\noindent\textbf{Step 2}. Once we can estimate $J(a,b,c)$ from below, we can do $c$-integration to estimate $I(f,g)$. Since $J(a,b,c) \geqslant 0$ for any $a,b,c > 0$, it follows from our claim \eqref{eq:claim} and the estimate \eqref{eq:step1} that
\begin{equation}\label{eq:step2}
\begin{split}
I(f,g)&\geqslant \int_0^\infty \int_0^\infty \bigg(\lambda \int_0^{(\max\{u(a), v(b)\}/2\omega_n)^{1/n}} c^{\lambda-1} J(a,b,c) dc \bigg) da db \\
& \geqslant \frac{(2\omega_n)^{-\lambda/n}}2 \int_0^\infty \int_0^\infty u(a) v(b) (\max\{u(a), v(b)\})^{\lambda /n} da db.
\end{split}
\end{equation}
Next, we split the integral $\int_0^\infty$ evaluated with respect to the variable $b$ in \eqref{eq:step2} into two integrals as follows $\int_0^\infty = \int_0^{a^{p/r}} + \int_{a^{p/r}}^\infty $. Then
\begin{equation}\label{eq:step3}
\begin{split}
\int_0^\infty & \int_0^\infty  u(a) v(b)  (\max\{u(a), v(b)\} )^{\lambda/ n} da db \\
& \geqslant \int_0^\infty u(a)\int_0^{a^{p/r}} v(b)^{1+\lambda/ n} db da + \int_0^\infty u(a)^{1+\lambda/n} \int_{a^{p/r}}^\infty v(b) db da \\
&=\int_0^\infty u(a)\int_0^{a^{p/r}} v(b)^{1+\lambda/ n} db da + \int_0^\infty v(b) \int_0^{b^{r/p}} u(a)^{1 + \lambda/n} da db\\
& = I + II.
\end{split}
\end{equation}
(Note that to obtain \eqref{eq:step3}, we have used the following identity
\begin{equation}\label{eqInterestingFormula}
\int_0^\infty \phi (a) \int_{a^{p/r}}^\infty \psi (b) db da = \int_0^\infty \psi (b) \int_0^{b^{r/p}} \phi (a) da db
\end{equation}
for arbitrary functions $\phi$ and $\psi$; see \cite[Eq. (20), page 110]{liebloss2001}.)

\medskip\noindent\textbf{Step 3}. We now estimate $I$ and $II$ term by term. To estimate $I$, we make use of the reversed H\"older inequality for parameters $n/(n+\lambda)$ and $-n/\lambda$ to obtain
\begin{align*}
\int_0^{a^{p/r}} v(b)^{1+\lambda/n} db &= \int_0^{a^{p/r}} v(b)^{1+\lambda/n} b^{(\lambda +n)(r-1)/n} b^{-(\lambda +n)(r-1)/n} db\notag\\
&\geqslant \bigg(\int_0^{a^{p/r}} v(b) b^{r-1} db \bigg)^{1+\lambda/n} \bigg(\int_0^{a^{p/r}} b^{(\lambda +n)(r-1)/\lambda} db \bigg)^{-\lambda/n}\notag.
\end{align*}
Observe that
\[
\int_0^{a^{p/r}} b^{(\lambda +n) (r-1)/\lambda} db = \frac{p}{r(1-p)}\, \frac\lambda n a^{n(1-p)/\lambda},
\]
since $(1+n/\lambda) (r-1) + 1 = r(1/p-1) n/\lambda > 0$. Therefore, we can conclude that
$$\int_0^{a^{p/r}} v(b)^{1+\lambda/n} db \geqslant \bigg(\frac\lambda n \frac{p}{r(1-p)}\bigg)^{-\lambda/n} a^{p-1} \bigg(\int_0^{a^{p/r}} v(b) b^{r-1} db \bigg)^{1+\lambda/n}.$$
Now, we use the normalization $\int_0^\infty p a^{p-1} u(a) da =1$ and the Jensen inequality to get
\begin{equation}\label{eq:int1}
\begin{split}
I =& \int_0^\infty u(a) \int_0^{a^{p/r}} v(b)^{1+\lambda/n} db da \\
 \geqslant &\frac1{(pr)^{1+\frac \lambda n}} \bigg(\frac\lambda n \frac r{1-r}\bigg)^{-\frac \lambda n} \bigg(\int_0^\infty p u(a) a^{p-1}\int_0^{a^{p/r}} r v(b)b^{r-1} db da\bigg)^{1+ \frac \lambda n}.
\end{split}
\end{equation}
By performing the same argument and using \eqref{eqInterestingFormula}, we can bound the term $II$ as follows
\begin{equation}\label{eq:int2}
\begin{split}
II =& \int_0^\infty v(b) \int_0^{b^{r/p}} u(a)^{1 + \lambda /n} da db \\
\geqslant & \frac1{(pr)^{1+\frac \lambda n}} \bigg(\frac\lambda n \frac p{1-p}\bigg)^{- \frac \lambda n} \bigg(\int_0^\infty r v(b) b^{r-1}\int_0^{b^{r/p}} p u(a)a^{p-1} da db\bigg)^{1+ \frac \lambda n} \\
=& \frac1{(pr)^{1+ \frac \lambda n}} \bigg(\frac\lambda n \frac p{1-p}\bigg)^{- \frac \lambda n} \bigg(\int_0^\infty p u(a) a^{p-1}\int_{a^{p/r}}^\infty r v(b)b^{r-1} db da\bigg)^{1+ \frac \lambda n}.
\end{split}
\end{equation}
By setting
\[
C:= \frac1{(pr)^{1+\lambda/n}} \Big(\frac\lambda n\Big)^{-\lambda/n} \bigg(\max\Big\{\frac{r}{1-r}, \frac{p}{1-p}\Big\}\bigg)^{-\lambda/n},
\]
substituting \eqref{eq:int1} and \eqref{eq:int2} into \eqref{eq:step3}, and using the convexity of the function $\phi(t) = t^{1+\lambda /n}$, we obtain
\begin{equation}\label{eq:step4}
\begin{split}
\int_0^\infty \int_0^\infty u(a) v(b) & \big(\max\{u(a), v(b)\}\big)^{\lambda /n} da db \\
\geqslant & C \bigg(\int_0^\infty p u(a) a^{p-1}\int_0^{a^{p/r}} r v(b)b^{r-1} db da\bigg)^{1+\lambda/n} \\
&+ C \bigg(\int_0^\infty p u(a) a^{p-1}\int_{a^{p/r}}^\infty r v(b)b^{r-1} db da\bigg)^{1+\lambda/n} \\
\geqslant & 2^{-\lambda/n}C .
\end{split}
\end{equation}
Combining \eqref{eq:step2} and \eqref{eq:step4} completes our proof of \eqref{eq:RHLS} with the constant
\begin{equation}\label{eqConstantC_npr}
C(n,p,r) = \frac{(2\omega_n)^{-\lambda/n}}{2^{1+\lambda/n}} \, \frac1{(pr)^{1+\lambda/n}} \bigg(\frac\lambda n\bigg)^{-\frac \lambda n} \bigg(\max\bigg\{\frac{r}{1-r}, \frac{p}{1-p}\bigg\}\bigg)^{- \frac \lambda n}.
\end{equation}

\section{Existence of optimal functions for the reversed HLS inequality: Proof of Theorem \ref{Existence}}

Recall that $p$, $r$, and $\lambda >0$ satisfy $1/p + 1/r - \lambda /n = 2$. For simplicity, we denote $q = r/(r-1) < 0$. Given a function $f$ on $\Rset^n$ which vanishes at infinity, its symmetric decreasing rearrangement is denoted by $f^\star$; see \cite{liebloss2001} or \cite{Bur} for the definitions. It is well-known that if $f\in L^p(\Rset^n)$ for $p > 0$, then $f^\star\in L^p(\Rset^n)$ and $\|f\|_{L^p(\Rset^n)} = \|f^\star\|_{L^p(\Rset^n)}$.

To prove Theorem \ref{Existence}, we first establish the following simple lemma which tells us more about the interaction between $f$ and $f^\star$.

\begin{lemma}\label{lemmaexistence}
We have the following claims:
\begin{itemize}
\item [(i)] For any non-negative functions $f,g$ on $\Rset^n$, we have
\begin{equation}\label{eq:reversedRiesz}
\int_{\Rset^n}\int_{\Rset^n} f(x) |x-y|^\lambda g(y) dxdy \geqslant \int_{\Rset^n}\int_{\Rset^n} f^\star(x) |x-y|^\lambda g^\star(y) dxdy.
\end{equation}
with equality if and only if
\[
f(x) = f^\star(x+x_0), \quad g(x) = g^\star(x+x_0)
\]
for some $x_0\in \Rset^n$.
\item [(ii)] The function $I_\lambda f^\star$ is radially symmetric and strictly increasing.
\item [(iii)] For any non-negative function $f\in L^p(\Rset^n)$, there holds
\begin{equation}\label{eq:decreasenorm}
\|I_\lambda f\|_{L^q(\Rset^n)} \geqslant \|I_\lambda f^\star \|_{L^q(\Rset^n)},
\end{equation}
with equality if and only if $f^\star$ is a strictly decreasing and 
\[
f(x) = f^\star(x +x_0)
\]
for some $x_0\in \Rset^n$.
\end{itemize}
\end{lemma}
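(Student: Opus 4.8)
The plan is to handle the three assertions in their natural order of dependence: part (ii) is an essentially self-contained computation, part (i) follows from the classical Riesz rearrangement inequality applied to a suitably chosen \emph{decreasing} kernel, and part (iii) then follows by combining part (i) with the reversed Hölder inequality in dual form. For part (i), the kernel $z\mapsto|z|^\lambda$ is increasing in $|z|$, so the Riesz rearrangement inequality cannot be applied to it directly; the standard remedy is subtraction from a constant. First I would reduce to the case of $f,g$ with compact support by the truncations $f_k=\min\{f,k\}\chi_{B_k}$, $g_k=\min\{g,k\}\chi_{B_k}$, for which $f_k\uparrow f$, $f_k^\star\uparrow f^\star$ (and likewise for $g$), so that monotone convergence transfers \eqref{eq:reversedRiesz} from the pairs $(f_k,g_k)$ to $(f,g)$.

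For $f,g$ supported in a ball $B_\rho$, fix any $R\geqslant 2\rho$. On the set where the integrand in \eqref{eq:reversedRiesz} does not vanish one has $x-y\in B_R$, and there
\[
|x-y|^\lambda=R^\lambda-h_R(x-y),\qquad h_R(z):=\big(R^\lambda-|z|^\lambda\big)_+,
\]
where $h_R$ is non-negative, radially symmetric and strictly decreasing on $B_R$, so that $h_R^\star=h_R$. Since $f^\star,g^\star$ are also supported in $B_\rho$ and rearrangement preserves the $L^1$ norm, subtracting the Riesz rearrangement inequality \cite[Theorem~3.7]{liebloss2001}
\[
\int_{\Rset^n}\int_{\Rset^n}f(x)\,h_R(x-y)\,g(y)\,dx\,dy\leqslant\int_{\Rset^n}\int_{\Rset^n}f^\star(x)\,h_R(x-y)\,g^\star(y)\,dx\,dy
\]
from the identity $R^\lambda\|f\|_{L^1(\Rset^n)}\|g\|_{L^1(\Rset^n)}=R^\lambda\|f^\star\|_{L^1(\Rset^n)}\|g^\star\|_{L^1(\Rset^n)}$ gives \eqref{eq:reversedRiesz}. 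For the equality statement, the ``if'' direction is immediate, since $\int\!\int f(x)|x-y|^\lambda g(y)\,dx\,dy$ is invariant under the simultaneous translation $f\mapsto f(\cdot+x_0)$, $g\mapsto g(\cdot+x_0)$; the ``only if'' direction reduces, through the same subtraction, to the known characterization of the equality cases in the Riesz rearrangement inequality for a \emph{strictly} symmetric-decreasing kernel, which forces $f$ and $g$ to be translates of $f^\star$ and $g^\star$ by one common vector.

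For part (ii), the radial symmetry of $I_\lambda f^\star$ is clear from the rotational invariance of both the kernel and $f^\star$. For strict monotonicity, given $0\leqslant r_1<r_2$ put $x_i=r_i e_1$, $c=(r_1+r_2)/2>0$, and let $\sigma$ be the reflection $y\mapsto(r_1+r_2-y_1,y_2,\dots,y_n)$ across $\{y_1=c\}$; using $|x_1-\sigma(y)|=|x_2-y|$ and $|x_2-\sigma(y)|=|x_1-y|$ together with a change of variables on $\{y_1>c\}$ one gets
\[
I_\lambda f^\star(x_2)-I_\lambda f^\star(x_1)=\int_{\{y_1<c\}}\big(|x_2-y|^\lambda-|x_1-y|^\lambda\big)\big(f^\star(y)-f^\star(\sigma(y))\big)\,dy,
\]
where on $\{y_1<c\}$ the first factor is positive and the second is non-negative because $|\sigma(y)|>|y|$ there and $f^\star$ is radially non-increasing, and the integrand is strictly positive on a set of positive measure whenever $f^\star\not\equiv0$. (Alternatively, for $n\geqslant2$ one notes $\Delta(|x|^\lambda)=\lambda(\lambda+n-2)|x|^{\lambda-2}>0$, so $I_\lambda f^\star$ is a radial, strictly subharmonic function and hence strictly increasing in $|x|$.) For part (iii), the reversed Hölder inequality with the conjugate pair $r\in(0,1)$, $q=r/(r-1)<0$, provides the dual formula
\[
\|I_\lambda f\|_{L^q(\Rset^n)}=\inf\Big\{\int_{\Rset^n}\int_{\Rset^n}f(x)\,|x-y|^\lambda\,\phi(y)\,dx\,dy\ :\ \phi\geqslant 0,\ \|\phi\|_{L^r(\Rset^n)}=1\Big\},
\]
the infimum being attained at $\phi$ proportional to $(I_\lambda f)^{q-1}$. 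For any admissible $\phi$, part (i) gives $\int\!\int f|x-y|^\lambda\phi\geqslant\int\!\int f^\star|x-y|^\lambda\phi^\star$, and since $\|\phi^\star\|_{L^r(\Rset^n)}=\|\phi\|_{L^r(\Rset^n)}=1$ the right-hand side is $\geqslant\|I_\lambda f^\star\|_{L^q(\Rset^n)}$ by reversed Hölder once more; taking the infimum over $\phi$ proves \eqref{eq:decreasenorm}. If equality holds, choosing $\phi$ optimal for $f$ forces equality throughout this chain: the reversed Hölder step then forces $\phi^\star$ proportional to $(I_\lambda f^\star)^{q-1}$, which is \emph{strictly} radially decreasing by part (ii), and the part (i) step forces $f$ to be a translate of $f^\star$, with the strict monotonicity of $f^\star$ extracted by the same argument applied with the roles of the two functions interchanged.

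The main obstacle is the equality analysis in part (i). It relies on the non-elementary characterization of the equality cases in the Riesz rearrangement inequality for a strictly symmetric-decreasing kernel, and some care is needed both to pass that characterization back through the truncation $f_k\to f$, $g_k\to g$ and to handle the degenerate situation in which $f^\star$ or $g^\star$ is constant on a ball. Everything else — the subtraction trick of part (i), the reflection identity of part (ii), and the reversed Hölder duality of part (iii) — is routine once it has been set up.
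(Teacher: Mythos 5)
Your proposal follows essentially the same route as the paper, which disposes of the lemma in a few lines: part (i) is the reversed Riesz rearrangement inequality of Brascamp--Lieb (your subtraction trick $|z|^\lambda=R^\lambda-\bigl(R^\lambda-|z|^\lambda\bigr)_+$ on compactly supported truncations, followed by monotone convergence, is precisely their proof of Proposition 9, which the paper simply cites), the equality case is deferred to the equality analysis for a strictly symmetric-decreasing kernel (the paper likewise says to repeat the Riesz equality argument using that $t\mapsto t^\lambda$ is strictly increasing), and part (iii) is obtained from (i) via the dual formula $\|I_\lambda f\|_{L^q(\Rset^n)}=\inf\bigl\{\int_{\Rset^n}(I_\lambda f)\phi\,dx:\phi\geqslant 0,\ \|\phi\|_{L^r(\Rset^n)}=1\bigr\}$ valid for $q<0$, which is exactly what the paper means by ``choosing suitable test functions''. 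Your explicit reflection identity for the strict monotonicity in (ii) is a direct computation where the paper again invokes (i) with test functions; either way this step is fine.

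The one place where the proposal does not actually deliver the stated conclusion is the final clause of (iii): the assertion that equality in \eqref{eq:decreasenorm} forces $f^\star$ to be strictly decreasing is not obtained by ``interchanging the roles of the two functions''. Your chain of equalities yields $f=f^\star(\cdot+x_0)$ and that the optimal test function satisfies $\phi^\star\propto (I_\lambda f^\star)^{q-1}$, which is strictly decreasing by (ii); it says nothing about strict monotonicity of $f^\star$ itself, and no argument can, since as literally stated that clause fails: $f=\chi_{B_1}$ gives equality in \eqref{eq:decreasenorm} trivially while $f^\star=\chi_{B_1}$ is not strictly decreasing. (The paper's one-line proof does not address this point either; in the intended application to minimizers of \eqref{eq:variationalprob} the strict decrease is recovered from the Euler--Lagrange relation, which expresses a negative power of $f$ as $I_\lambda$ of a nontrivial non-negative function, strictly increasing by (ii).) Apart from this, and the care you already flag about transporting the Riesz equality characterization through the truncation in (i), the argument is sound.
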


\begin{proof}
Inequality \eqref{eq:reversedRiesz} was proven in \cite[Proof of Proposition $9$]{BrascampLieb1976}. For the equality case, we can repeat the proof of the equality case in the Riesz inequality with a remark that the function $t\to t^\lambda$ is strictly increasing; see also \cite[Proof of Theorem 2.10]{Bur}. This completes the proof of (i). It is clear that the function $I_\lambda f^\star$ is radially symmetric. The strictly increasing monotonicity of $I_\lambda f^\star$ and (iii) can be derived from (i) by choosing suitable test functions. 
\end{proof}

We are now in a position to prove Theorem \ref{Existence}. Since the radial symmetry and strictly decreasing of minimizers for \eqref{eq:variationalprob} immediately follow from Lemma \ref{lemmaexistence}, it suffices to prove the existence of a minimizer for \eqref{eq:variationalprob}. For clarity, we divide our proof into several steps.

\medskip\noindent\textbf{Step 1}. \textit{Select a suitable minimizing sequence for \eqref{eq:variationalprob}.} 

We start our proof by letting $\{f_j\}_j$ be a minimizing sequence for \eqref{eq:variationalprob}, so is the sequence $\{f_j^\star \}_j$. Hence, without loss of generality, we can assume at the beginning that $\{f_j\}_j$ is non-negative, radially symmetric, non-increasing sequence. To avoid introducing more notations, we shall write $f_j(x)$ by $f_j(|x|)$. Under this convention and that $\|f_j\|_{L^p(\Rset^n)}=1$, we have
\[
\begin{split}
1 = & n\omega_n \int_0^\infty f_j(r)^p r^{n-1} dr  \geqslant n\omega_n \int_0^R f_j(r)^p r^{n-1} \geqslant \omega_n f_j(R)^p R^n
\end{split}
\]
for any $R > 0$. From this, we obtain the estimate
\[
0\leqslant f_j(r) \leqslant C r^{-n/p}
\]
for any $r > 0$ and for some constant $C$ independent of $j$. In order to go further, we need the following lemma whose proof is similar to that of Lemma 3.2 proven in \cite{dz2014}; see also \cite[Lemma 2.4]{l1983}.

\begin{lemma}\label{farawayzero}
Suppose that $f\in L^p(\Rset^n)$ is non-negative, radially symmetric, and $f(|x|) \leqslant \epsilon |x|^{-n/p}$ for all $|x| > 0$. Then, there exists a constant $C_1 > 0$ independent of $f$ and $\epsilon$ such that 
\begin{equation}\label{eq:farawayzero}
\|I_\lambda f\|_{L^q(\Rset^n)} \geqslant C_1 \epsilon^{1 -p/p_1} \|f\|_{L^p(\Rset^n)}^{p/p_1}.
\end{equation}
for any $p_1\in (0, 2n/(2n+\lambda))$.
\end{lemma}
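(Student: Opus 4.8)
\medskip
\noindent\emph{Sketch of the intended argument.} The plan is to use the scale invariance of the asserted estimate together with a dichotomy on the size of $\|f\|_{L^p}$. After the substitution $f\mapsto f/\epsilon$, which turns the hypothesis into $f(|x|)\leqslant|x|^{-n/p}$ and, since $(1-p/p_1)+p/p_1=1$, turns the conclusion into $\|I_\lambda f\|_{L^q}\geqslant C_1\|f\|_{L^p}^{p/p_1}$, one may assume $\epsilon=1$; the dilation $f\mapsto t^{-n/p}f(\cdot/t)$ then preserves $\epsilon=1$, $\|f\|_{L^p}$ and both sides of the inequality — this is exactly the content of $1/p+1/r-\lambda/n=2$ — and can be kept in reserve to normalise the scale on which $f$ lives. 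Writing $A:=\|f\|_{L^p}$, when $A$ is small an elementary comparison of the powers $A$ and $A^{p/p_1}$ reduces the claim to the reversed HLS inequality $\|I_\lambda f\|_{L^q}\geqslant C(n,p,r)A$ of Theorem~\ref{thmMAIN}; so the substance is the case $A$ large, for which we must prove $\int_{\Rset^n}(I_\lambda f)^q\,dx\leqslant C\,A^{qp/p_1}$ (the direction reverses because $q<0$), a bound whose right-hand side tends to $0$ as $A\to\infty$.

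The core is a pointwise lower bound for $I_\lambda f$. Splitting $I_\lambda f(x)=\int|x-y|^\lambda f(y)\,dy$ over $B_{|x|/2}$, over the annulus $B_{2|x|}\setminus B_{|x|/2}$ (deleting from it the ball $B_{|x|/4}(x)$ that it contains), and over $\Rset^n\setminus B_{2|x|}$, using $|x-y|\geqslant|x|/4$ on the first two pieces and $|x-y|\geqslant|y|/2$ on the last, and applying the reversed H\"older inequality with exponents $p$ and $p/(p-1)$ on each piece, one obtains — using $\lambda>n(1/p-1)$, equivalently $r<1$, to make the resulting power integrals converge — a bound $I_\lambda f(x)\geqslant c\,|x|^{\lambda+n-n/p}\,(a_1(x)+a_2(x)+a_3(x))^{1/p}$, where $a_1(x),a_3(x)$ are the $L^p$-masses of $f$ on $B_{|x|/2}$ and on $\Rset^n\setminus B_{2|x|}$ and $a_2(x)$ is the $L^p$-mass on the deleted annulus diminished by a dimensionless constant $\kappa_0(n,p)$. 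Since $a_1+a_2+a_3$ differs from $A^p$ by at most $\kappa_0$, and the hypothesis $f(|x|)\leqslant|x|^{-n/p}$ forces the $L^p$-mass on the annulus $B_{2|x|}\setminus B_{|x|/2}$ to be at most $n\omega_n\log 4$, the parenthesis is $\geqslant A^p/4$ once $A$ exceeds an explicit $A_0(n,p)$; and the balance $1/p+1/r-\lambda/n=2$ gives $\lambda+n-n/p=-n/q$. Hence, for $A>A_0$,
\[
I_\lambda f(x)\;\geqslant\;c_0\,|x|^{-n/q}\,A\qquad\text{for every }x\in\Rset^n .
\]

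It remains to integrate $(I_\lambda f)^q$. This bound is \emph{scale-critical}: $\int_{\Rset^n}\bigl(|x|^{-n/q}A\bigr)^q\,dx=A^q\int_{\Rset^n}|x|^{-n}\,dx=\infty$, so it is usable only on a bounded range of dyadic scales, and on the two tails one argues separately. If $\int_{|y|\geqslant R}|y|^\lambda f(y)\,dy=\infty$ for some $R>0$ (in particular if $\|f\|_{L^1}=\infty$) then $I_\lambda f\equiv\infty$ and there is nothing to prove; otherwise, near $x=0$ the inequality $I_\lambda f(x)\geqslant 2^{-\lambda}\int_{|y|\geqslant 2|x|}|y|^\lambda f(y)\,dy$ keeps $I_\lambda f$ bounded below on a ball $B_\delta$ by a positive multiple of $\delta^{-n/q}A$ (so, using $(-n/q)q+n=0$, by $\int_{B_\delta}(I_\lambda f)^q\leqslant CA^q$), while for $|x|$ large the inequality $I_\lambda f(x)\geqslant(|x|/2)^\lambda\int_{|y|\leqslant|x|/2}f(y)\,dy\gtrsim|x|^\lambda\|f\|_{L^1}$ together with $\int_{|x|\geqslant R}|x|^{\lambda q}\,dx<\infty$ — valid because $\lambda q<-n$, i.e. $r>n/(n+\lambda)$ — handles the outer region; here one records that $f(|x|)\leqslant|x|^{-n/p}$ forces $\int f^p\leqslant\int_{\{f>0\}}|x|^{-n}\,dx$, so $f$ must occupy $\gtrsim A^p$ dyadic scales, and the transition radii and $\|f\|_{L^1}$ are then controlled in terms of $A$ alone. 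Assembling the three regimes and choosing the exponent $p_1<2n/(2n+\lambda)$ precisely so that the (sub-polynomial, in fact exponentially small in $A^p$) contribution of the intermediate region is absorbed into $A^{p/p_1}$ yields the claimed estimate; alternatively the sharp diagonal inequality of Theorem~\ref{thmOPTIMAL} may be invoked at this stage for a cleaner constant.

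\medskip
\noindent\emph{Main obstacle.} The only pointwise lower bound one can extract for $I_\lambda f$ is scale-critical, so neither it nor the crude tail bounds suffice on their own; the delicate point is to balance the three scale regimes against the single quantitative input available — that a large $\|f\|_{L^p}$ (equivalently a small $\epsilon$) compels $f$ to be spread over an exponentially long range of scales — and to track the exponents, all of which are forced by $1/p+1/r-\lambda/n=2$.
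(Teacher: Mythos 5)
Your normalisation to $\epsilon=1$, the use of the dilation invariance, and the three-piece reverse-H\"older pointwise bound $I_\lambda f(x)\geqslant c\,|x|^{\lambda+n-n/p}\bigl(\|f\|_{L^p}^p-\kappa_0\bigr)^{1/p}=c\,|x|^{-n/q}\bigl(A^p-\kappa_0\bigr)^{1/p}$ (the deleted ball $B_{|x|/4}(x)$ indeed carries only a dimensionless amount of $f^p$-mass) are correct, and you correctly reduce the essential case $A=\|f\|_{L^p}$ large to proving $\int_{\Rset^n}(I_\lambda f)^q\,dx\leqslant CA^{qp/p_1}$, a quantity which, since $q<0$ and $p/p_1>1$, must be \emph{smaller} than any constant multiple of $A^{q}$. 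The gap is in the assembly: none of the three regional estimates you actually establish can produce anything below $A^q$. On the inner ball, your own cancellation $(-n/q)q+n=0$ shows the bound $\int_{B_\delta}(I_\lambda f)^q\leqslant CA^q$ is independent of $\delta$ and cannot be sharpened within that argument; the intermediate region, treated with the scale-critical bound, contributes at best $CA^q$ times the number of dyadic scales; and the claim that this contribution is ``exponentially small in $A^p$'' is precisely what is not proved --- it does not follow from $I_\lambda f\gtrsim|x|^{-n/q}A$. Assembled as written, the argument yields only $\|I_\lambda f\|_{L^q}\gtrsim A$, i.e.\ nothing beyond Theorem \ref{thmMAIN}; the exponent $p/p_1>1$ is never reached. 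The missing step is to convert the spreading information (each dyadic annulus holds at most $n\omega_n\log 2$ of $f^p$-mass, so the mass occupies $\gtrsim A^p$ annuli) into lower bounds for $I_\lambda f$ that \emph{grow} with $A$: for instance, after normalising the median scale to $1$, $I_\lambda f(x)\geqslant 2^{-\lambda}\int_{|y|\geqslant 2|x|}|y|^\lambda f\,dy\geqslant c\sum_k 2^{-kn/q}m_k^{1/p}\geqslant c\,e^{cA^p}$ for $|x|\leqslant 1$ (with $m_k$ the $f^p$-mass of the annulus $\{2^k\leqslant|y|\leqslant 2^{k+1}\}$), and an analogous exponential lower bound for $\|f\|_{L^1(B_1)}$ feeding the outer region. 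Only with such bounds do the three contributions fall under $A^{qp/p_1}$. This is fixable along your own lines --- you name the right mechanism --- but it is the heart of the lemma and is not carried out; also, invoking Theorem \ref{thmOPTIMAL} at this stage would be circular, since that theorem rests on Theorem \ref{Existence}, which uses the present lemma.

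Two further points. Your small-$A$ step (comparing $A$ with $A^{p/p_1}$) silently requires $p/p_1\geqslant 1$, i.e.\ $p_1\leqslant p$: when $p<2n/(2n+\lambda)$ the stated range admits $p_1>p$, and then the $\epsilon$-normalised claim fails already for small multiples of a fixed bump, so the restriction that matters in your scheme is $p_1<\min\{p,\,2n/(2n+\lambda)\}$ --- not, as you suggest, a choice of $p_1$ made to absorb the intermediate region, whose contribution (once the exponential bounds are in place) is beaten by any fixed polynomial in $A$. Finally, for calibration: the paper gives no proof of this lemma, deferring to \cite[Lemma 3.2]{dz2014} and \cite[Lemma 2.4]{l1983}, so there is no in-paper argument to match; judged on its own, your proposal has a sound skeleton and a genuinely nice pointwise estimate, but the quantitative core is still missing.
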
 

\medskip\noindent\textbf{Step 2}. \textit{Existence of a potential minimizer $f_0$ for \eqref{eq:variationalprob}.} 

Set
\[
a_j = \sup_{r > 0} r^{n/p} f_j(r) \in [0,C].
\]
Keep in mind that $\|f_j\|_{L^p(\Rset^n)} = 1$ and $\|I_\lambda f_j\|_{L^q(\Rset^n)} \to \Cscr_{n,p,\lambda} < \infty$. It follows from Lemma \ref{farawayzero} that $a_j \geqslant 2c_0$ for some $c_0 >0$. For each $j$, we choose $\lambda_j > 0$ in such a way that $\lambda_j^{n/p} f_j(\lambda_j) > c_0$. Then we set 
\[
g_j(x) = \lambda_j^{n/p} f_j(\lambda_j x).
\] 
Now, it is routine to check that $\{g_j\}_j$ is also a minimizing sequence for \eqref{eq:variationalprob}. Furthermore, $g_j(1) > c_0$ for any $j$ by our choice for $\lambda_j$. Consequently, we can further assume that the sequence $\{f_j\}_j$ has $f_j(1) > c_0$ for any $j$; otherwise, we can replace the sequence $\{f_j\}_j$ by the sequence $\{g_j\}$, if necessary. 

Similar to Lieb's argument which was based on the Helly theorem, a subsequence of $\{f_j\}_j$ converges weakly to $f_0$ a.e. in $\Rset^n$. It is evident that $f_0$ is non-negative, radially symmetric, non-increasing and is in $L^p(\Rset)$. The rest of our arguments will be used to show that $f_0$ is the desired minimizer for \eqref{eq:variationalprob}.

By Lemma \ref{lemmaexistence}, the function $I_\lambda f_j$ is radially symmetric and strictly decreasing for any $j$. Moreover, for all $x\in \Rset^n$, there holds
\begin{equation}\label{eq:lowerbound}
I_\lambda f_j(x) \geqslant c_0\int_{|y| \leqslant 1} |x-y|^\lambda dy \geqslant C_2 (1 + |x|^\lambda)
\end{equation}
for some new constant $C_2$ independent of $j$. 

\medskip\noindent\textbf{Step 3}. \textit{The function $f_0$ is a minimizer for \eqref{eq:variationalprob}: Preliminaries.} 

Since $\|I_\lambda f_j\|_q$ has the limit $\Cscr_{n,p,\lambda}$, there exists some constant $C_3 > 0$ such that $\|I_\lambda f_j\|_q^q \leqslant C_3$ for any $j$. Therefore
\[
\begin{split}
C_3 \geqslant &\int_{\Rset^n} (I_\lambda f_j(x))^q dx \geqslant  \int_{|x|\leqslant R}(I_\lambda f_j(x))^q dx \geqslant \omega_n I_\lambda f_j(R)^q R^n
\end{split}
\]
for any $R> 0$. Consequently, for all $r>0$, there holds
\[
0 \leqslant (I_\lambda f_j(r))^{-1} \leqslant C_4 r^{n/q}
\]
for some new constant $C_4$ independent of $f_j$. Since $(I_\lambda f_j)^{-1}$ is radially symmetric and non-increasing, it is easy to verify that a subsequence of $\{(I_\lambda f_j)^{-1}\}_j$ converges to $k$ a.e. in $\Rset^n$ for some function $k$. By \eqref{eq:lowerbound} and the dominated convergence theorem, we arrive at
\begin{equation}\label{eqIntegralK=C}
\int_{\Rset^n} k(x)^{-q} dx = \Cscr_{n,p,\lambda}^q.
\end{equation}

\noindent\textbf{Step 4}. \textit{The function $f_0$ is a minimizer for \eqref{eq:variationalprob}: Completed.} 

To realize that $f_0$ is a minimizer for \eqref{eq:variationalprob}, we first prove that $\|f_0\|_{L^p(\Rset^n)} =1$. For this purpose, one could show that $f_j \to f_0$ strongly in $L^p(\Rset^n)$ by employing the rough reversed HLS inequality \eqref{eq:RHLS}. However, it is difficult to adopt this strategy since we cannot control the sign of $f_j - f_0$, which is required when applying \eqref{eq:RHLS}; see \cite[page 17]{dz2014}. In order to avoid such difficulty, we propose an alternative approach. First, we observe the relation \eqref{eqIntegralK=C} to see that the set $\{x : 0< k(x) <\infty\}$ has a positive measure. Therefore, we can choose two distinct points $x_1$ and $x_2$ such that
\[
\lim_{j\to\infty} (I_\lambda f_j) (x_i) = k(x_i)^{-1} 
\]
for $i=1,2$. (This is because $(I_\lambda f_j)^{-1} \to k$ a.e. in $\Rset^n$.) Then, there exists some constant $C_5 > 0$ such that 
\[
I_\lambda f_j(x_i) \leqslant C_5
\]
for $i=1,2$ and for all $j \geqslant 1$. Using the elementary inequality $|x+y|^\lambda \leqslant \max\{1,2^{\lambda-1}\} (|x|^\lambda + |y|^\lambda)$ for any $x,y\in\Rset^n$, we estimate
\begin{equation*}
\begin{split}
\frac{|x_1-x_2|^\lambda}{\max\{1,2^{\lambda-1}\}} \int_{\Rset^n}f_j(y)dy &\leqslant \int_{\Rset^n} |x_1-y|^\lambda f_j(y) dy + \int_{\Rset^n} |x_2-y|^\lambda f_j(y) dy \\
&= I_\lambda f_j(x_1) + I_\lambda f_j(x_2) \leqslant 2 C_5.
\end{split}
\end{equation*}
Thus, there exists another constant $C_6> 0$ such that 
\begin{equation*}
\int_{\Rset^n}f_j(y)dy \leqslant C_6
\end{equation*}
for all $j\geqslant 1$. On one hand, there holds $|x_1-y| \geqslant |y|/3$ for any $R > 2|x_1|$ and any $y$ in the region $\{3R/4 \leqslant |y|\leqslant R\}$. Therefore, by a simple change of variables, we obtain
\[
\begin{split}
C_5 \geqslant &\int_{\{3R/4\leqslant |y|\leqslant R\}} |x_1-y|^\lambda f_j(y) dy \geqslant  3^{-\lambda} f_j(R) R^{n+\lambda} \int_{\{3/4\leqslant |y|\leqslant 1\}} |y|^\lambda dy.
\end{split}
\]
(Note that in the preceding estimate, we have used the fact that $f_j$ is radially symmetric and non-increasing.) Hence, there exists some new constant $C_7 > 0$ such that $f_j(r) \leqslant C_7 r^{-n-\lambda}$ for any $r > 2 |x_1|$ and for all $j\geqslant 1$. Making use of the above estimate, we deduce that
\begin{equation}\label{eq:outsideball}
\begin{split}
\int_{\{|y| > R\}} f_j(x)^p dx \leqslant & C_7^p \int_{\{|y| > R\}}|x|^{-p(n+\lambda)}dx = -\frac{\omega_n q} p C_7^pR^{np/q}.
\end{split}
\end{equation}
Since $\int_{\Rset^n}f_j(y)dy \leqslant C_6$, we also have
\begin{equation}\label{eq:fjgeqR}
\int_{\{f_j > R\}} f_j(x)^p dx \leqslant R^{p-1} \int_{\Rset^n} f_j(x) dx \leqslant C_6 R^{p-1}.
\end{equation}
In view of \eqref{eq:outsideball} and \eqref{eq:fjgeqR}, given $\epsilon >0$, we can select $R > 2|x_1|$ sufficiently large such that 
\[
\int_{\{|y| > R\}} f_j(x)^p dx < \frac\epsilon 2 \quad \text{ and } \quad \int_{\{f_j > R\}} f_j(x)^p dx < \frac\epsilon 2.
\]
We now set $g_j(x) = \min\{f_j(x),R\}$ for each $j\geqslant 1$. By using $\int_{\Rset^n} f_j(x)^p dx =1$, we have
\begin{equation*}
\begin{split}
\int_{\{|y|\leqslant R\}}g_j(x)^p dx& \geqslant \int_{\{|y|\leqslant R\}\cap \{f_j \leqslant R\}} f_j(x)^p dx\\
&=1 - \int_{\{|y|\leqslant R\}\cap \{f_j > R\}} f_j(x)^p dx -\int_{\{|y| > R\}} f_j(x)^p dx\\
&\geqslant 1-\epsilon.
\end{split}
\end{equation*}
For each $R$ fixed, the dominated convergence theorem guarantees that
\begin{equation*}
\lim_{j\to\infty} \int_{\{|y|\leqslant R\}}g_j(x)^p dx = \int_{\{|y|\leqslant R\}}\left(\min\{f_0(x),R\}\right)^p dx.
\end{equation*}
Therefore, as $R \to +\infty$, we arrive at
\[
\int_{\Rset^n}f_0(x)^p dx \geqslant 1-\epsilon,
\]
for any $\epsilon >0$. From this we conclude $\int_{\Rset^n}f_0(x)^p dx \geqslant 1$. On the other hand, we have $\int_{\Rset^n}f_0(x)^p dx \leqslant 1$ by the Fatou lemma. This means that $\|f_0\|_{L^p(\Rset^n)} =1$.

To prove that $f_0$ is a minimizer for \eqref{eq:variationalprob}, we apply the Fatou lemma again to get
\[
k(x) = \lim_{j\to\infty} (I_\lambda f_j(x))^{-1} =(\lim_{j\to\infty}I_\lambda f_j(x))^{-1} \leqslant (I_\lambda f_0(x))^{-1}
\]
for a.e. $x$ in $\Rset^n$. Combining the preceding estimate and \eqref{eqIntegralK=C} gives
\[
\Cscr_{n,p,\lambda} = \Cscr_{n,p,\lambda}\|f_0\|_{L^p(\Rset^n)} \leqslant \|I_\lambda f_0\|_{L^q(\Rset^n)} \leqslant \left(\int_{\Rset^n} k(x)^{-q} dx\right)^{1/q} = \Cscr_{n,p,\lambda}.
\] 
This shows that $f_0$ is indeed a minimizer for \eqref{eq:variationalprob}.

\section{Classification of non-negative, measurable solutions of \eqref{eqIntegralSystem}: Proof of Theorems \ref{thmOPTIMAL} and \ref{thmCLASSIFICATION}}
\label{secCLASSIFICATION}

Before proving Theorem \ref{thmCLASSIFICATION}, it is necessary to mention the relation between the optimizers for \eqref{eq:RHLS-diagonal} and the system \eqref{eq:Euleur}. Since the argument is simple, we include it below to make this paper self-contained.

To see how optimizers for \eqref{eq:RHLS-diagonal} and \eqref{eq:Euleur} are related to each other, let us first denote
\[F_\lambda (f,g) = \int_{\Rset^n} \int_{\Rset^n} f(x) |x-y|^\lambda g(y) dx dy.\]
Then, to compute the sharp constant $\Cscr_{n,\lambda}$ it is necessary to minimize the functional $F_\lambda$ along with the following two constraints
\[
\int_{\Rset^n} |f(x)|^{2n/(2n+\lambda)} dx =1 \quad \text{ and } \quad \int_{\Rset^n} |g(x)|^{2n/(2n+\lambda)} dx =1.
\] 
By a fairy simple calculation, the first variation of the functional $F_\lambda$ with respect to $f$ is 
\[D_f (F_\lambda) (f,g) (h) = \int_{\Rset^n} \bigg( \int_{\Rset^n} |x-y|^\lambda g(y) dy \bigg) h(x) dx\]
while the first variation of the constraint $\int_{\Rset^n} |f(x)|^{2n/(2n+\lambda)} dx =1$ with respect to $f$ is
\[
\frac{2n}{2n+\lambda} \int_{\Rset^n} |f(x)|^{-\lambda/(2n+\lambda)-1} f(x) h(x) dx.
\]
Therefore, by the Lagrange multiplier theorem, there exists some constant $\alpha$ such that
\[
\int_{\Rset^n} \bigg( \int_{\Rset^n} |x-y|^\lambda g(y) dy \bigg) h(x) dx = \alpha \int_{\Rset^n} |f(x)|^{-\lambda/(2n+\lambda)-1} f(x) h(x) dx
\]
holds for all $h$. Consequently, both $f$ and $g$ must satisfy
\[
\alpha |f(x)|^{-\lambda/(2n+\lambda)-1} f(x) = \int_{\Rset^n} |x-y|^\lambda g(y) dy.
\]
Interchanging $f$ and $g$, we conclude that $f$ and $g$ must also satisfy
\[
\beta |g(x)|^{-\lambda/(2n+\lambda)-1} g(x) = \int_{\Rset^n} |x-y|^\lambda f(y) dy
\]
for some new constant $\beta$. Note that the balance condition guarantees that $\alpha = \beta = 1/F_\lambda (f,g)$. Hence, up to a constant multiple, the relation above leads us to \eqref{eq:Euleur}. From this, it suffices to classify positive solutions of \eqref{eq:systemDouZhu} in order to understand the structure of optimizers for \eqref{eq:RHLS-diagonal}.

\subsection{Preliminaries}

In this subsection, we setup some preliminary findings necessary for the rest of our analysis. The most important part of this section is to obtain \textit{a prior} estimates for solutions of \eqref{eqIntegralSystem}; see Lemma \ref{lem-Growth} below. Here and in what follows, by $\lesssim$ and $\gtrsim$ we mean inequalities up to $p$, $q$, and dimensional constants.

\begin{lemma}\label{lem-Growth}
Given $n \geqslant 1$ and $p,q>0$, let $(u,v)$ be a pair of non-negative Lebesgue measurable functions in $\Rset^n$ satisfying \eqref{eqIntegralSystem}. Then
\begin{equation}\label{eqGrowth1}
\int_{\Rset^n} {(1 + |y|^p )u{{(y)}^{ - q}}dy} < \infty , \quad \int_{\Rset^n} {(1 + |y|^p )v{{(y)}^{ - q}}dy} < \infty ,
\end{equation}
and
\begin{equation}\label{eqGrowth2}
\begin{split}
\mathop {\lim }\limits_{|x| \to \infty } \frac{{u(x)}}{{|x|^p }} =& {\int_{\Rset^n} {v{{(y)}^{ - q}}dy} } , \quad \mathop {\lim }\limits_{|x| \to \infty } \frac{{v(x)}}{{|x|^p }} = {\int_{\Rset^n} {u{{(y)}^{ - q}}dy} } ,
\end{split}
\end{equation}
and $u$ and $v$ are bounded from below in the following sense 
\begin{equation}\label{eqGrowth3}
u(x),v(x) \gtrsim 1 + |x|^p
\end{equation}
and above in the following sense
\begin{equation}\label{eqGrowth4}
u(x),v(x) \lesssim 1 + |x|^p
\end{equation}
for all $x \in \Rset^n$. In other words, there holds
\[
 \frac{1 + |x|^p }{C} \leqslant u(x), v(x) \leqslant C(1 + |x|^p )
\]
in $\Rset^n$ for some constant $C \geqslant 1$.
\end{lemma}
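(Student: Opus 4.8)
The plan is to bootstrap from the mere finiteness of the defining integrals in \eqref{eqIntegralSystem} to the two-sided bound $u,v \asymp 1 + |x|^p$. First I would record the basic consistency observation: since $u(x)$ is finite for a.e.\ $x$ (indeed for \emph{every} $x$, once we know the integrals converge), and since $|x-y|^p \geqslant \big||x|-|y|\big|^p$, evaluating the first equation of \eqref{eqIntegralSystem} at any fixed point $x_0$ where $u(x_0)<\infty$ forces $\int_{\Rset^n} |x_0-y|^p v(y)^{-q}\,dy < \infty$; splitting the region of integration into $|y|\leqslant 2|x_0|+1$ and $|y|> 2|x_0|+1$ and using that on the latter region $|x_0-y|^p \gtrsim |y|^p \gtrsim 1$, we obtain $\int_{\Rset^n}(1+|y|^p)v(y)^{-q}\,dy<\infty$, and symmetrically for $u$. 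This proves \eqref{eqGrowth1}. At the same time, the elementary inequality $|x-y|^p \leqslant \max\{1,2^{p-1}\}(|x|^p+|y|^p)$ combined with \eqref{eqGrowth1} shows the integrals defining $u(x)$ and $v(x)$ are finite for \emph{every} $x\in\Rset^n$, and in fact $u(x),v(x) \lesssim 1+|x|^p$, which is \eqref{eqGrowth4}.

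Next I would establish the asymptotics \eqref{eqGrowth2}. Write
\[
\frac{u(x)}{|x|^p} = \int_{\Rset^n} \frac{|x-y|^p}{|x|^p}\, v(y)^{-q}\,dy,
\]
and note that for each fixed $y$ the integrand converges pointwise to $v(y)^{-q}$ as $|x|\to\infty$, since $|x-y|/|x|\to 1$. To pass to the limit under the integral I would produce a dominating function: using $|x-y|^p \leqslant \max\{1,2^{p-1}\}(|x|^p + |y|^p)$ gives $|x-y|^p/|x|^p \leqslant \max\{1,2^{p-1}\}(1 + |y|^p/|x|^p)$, which for $|x|\geqslant 1$ is bounded by $\max\{1,2^{p-1}\}(1+|y|^p)$, an $L^1$ function against $v(y)^{-q}\,dy$ by \eqref{eqGrowth1}. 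The dominated convergence theorem then yields $\lim_{|x|\to\infty} u(x)/|x|^p = \int_{\Rset^n} v(y)^{-q}\,dy$, and symmetrically for $v$. Since \eqref{eqGrowth1} guarantees these limiting integrals are finite, and they are manifestly positive (a nonnegative measurable function whose reciprocal power has finite integral cannot vanish a.e.), we get $0 < \lim_{|x|\to\infty} u(x)/|x|^p < \infty$, and likewise for $v$.

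Finally, \eqref{eqGrowth3}: the lower bound. For large $|x|$ this is immediate from \eqref{eqGrowth2}, since $u(x)/|x|^p$ tends to a strictly positive constant, so $u(x) \gtrsim |x|^p \gtrsim 1 + |x|^p$ once $|x|\geqslant R_0$ for suitable $R_0$. For the bounded region $|x|\leqslant R_0$ I would argue directly: $u(x) = \int_{\Rset^n}|x-y|^p v(y)^{-q}\,dy \geqslant \int_{|y|\geqslant 2R_0} |x-y|^p v(y)^{-q}\,dy \geqslant \int_{|y|\geqslant 2R_0}(|y|/2)^p v(y)^{-q}\,dy$, which is a fixed positive constant independent of $x$ (finite and positive by \eqref{eqGrowth1} and the non-vanishing of $v^{-q}$); combining the two regimes gives $u(x)\gtrsim 1+|x|^p$ on all of $\Rset^n$, and the same for $v$. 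Assembling \eqref{eqGrowth3} and \eqref{eqGrowth4} yields the displayed two-sided bound with a single constant $C\geqslant 1$.

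I expect the only genuinely delicate point to be the justification of passing to the limit inside the integral in the proof of \eqref{eqGrowth2} — i.e.\ exhibiting the right dominating function uniformly in $x$ away from the origin — but this is handled cleanly by the elementary inequality $|x-y|^p \leqslant \max\{1,2^{p-1}\}(|x|^p+|y|^p)$ together with \eqref{eqGrowth1}. Everything else is bookkeeping with the layer-cake-free estimates already in hand. Note that the relation $q = 1 + 2n/p$ asserted in Theorem \ref{thmCLASSIFICATION} is \emph{not} needed here and is not claimed in this lemma; it will emerge later from the classification argument, so no circularity arises.
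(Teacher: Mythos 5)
Your proof of \eqref{eqGrowth2}, \eqref{eqGrowth4} and \eqref{eqGrowth3} would be fine \emph{if} \eqref{eqGrowth1} were in hand, but your argument for \eqref{eqGrowth1} has a genuine gap, and everything else in your chain hangs on it. Finiteness of $u(x_0)=\int_{\Rset^n}|x_0-y|^p v(y)^{-q}\,dy$ controls $(1+|y|^p)v(y)^{-q}$ only where the kernel $|x_0-y|^p$ is bounded away from zero. Your split into $|y|\leqslant 2|x_0|+1$ and $|y|>2|x_0|+1$ handles the far region, but on the near region you silently need $\int_{\{|y|\leqslant 2|x_0|+1\}} v(y)^{-q}\,dy<\infty$, and in particular integrability of $v^{-q}$ near $y=x_0$, where $|x_0-y|^p\to 0$ (recall $p,q>0$). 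Nothing in your argument rules out $v$ being very small there, so this local integrability is exactly the point that must be proved, not assumed. You also cannot appeal to the lower bound \eqref{eqGrowth3} to bound $v^{-q}$ locally, because in your ordering \eqref{eqGrowth3} is derived from \eqref{eqGrowth2}, which is derived from \eqref{eqGrowth1} — that would be circular.

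The paper closes precisely this hole \emph{before} proving \eqref{eqGrowth1}: since $u,v$ are finite on a set of positive measure, there is a set $E\subset B(0,R)$ of measure at least $1/R$ on which both are bounded by $R$; integrating over $E$ (and discarding a small ball where $|y|^p$ is small) gives a uniform positive lower bound $u,v>C_0$ on all of $\Rset^n$, and then the bound $u,v\gtrsim |x|^p$ for $|x|\geqslant 2R$, i.e.\ \eqref{eqGrowth3} comes first. With $v$ bounded below, $v^{-q}$ is locally bounded and the near-region integral in \eqref{eqGrowth1} is trivial. If you prefer to keep your ordering, an alternative repair is to use the a.e.\ finiteness of $u$ at a \emph{second} base point: for any ball $B$, choose $x_1$ with $u(x_1)<\infty$ and $\operatorname{dist}(x_1,B)\geqslant 1$, so that $\int_B v^{-q}\,dy\leqslant \int_{\Rset^n}|x_1-y|^p v(y)^{-q}\,dy=u(x_1)<\infty$; a single point $x_0$ does not suffice. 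As a side remark, your justification that $\int_{\Rset^n}v^{-q}\,dy>0$ should invoke the finiteness of $v$ (from \eqref{eqGrowth4}), not the finiteness of the integral; and once \eqref{eqGrowth1} is properly established, your derivation of \eqref{eqGrowth4} directly from $|x-y|^p\lesssim |x|^p+|y|^p$ is actually cleaner than the paper's detour through \eqref{eqGrowth2}.
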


\begin{proof}
We begin by noting from \eqref{eqIntegralSystem} that both $u$ and $v$ are strictly positive everywhere in $\Rset^n$ and are finite within a set of positive measure. Consequently, there exist some large constant $R>1$ and some Lebesgue measurable set $E \subset \Rset^n$ such that
\begin{equation}\label{eqSetE}
E \subset \{ y : u(y) < R, v(y) < R\} \cap B(0,R)
\end{equation}
with $\text{meas}(E) \geqslant 1/R$. Using this, we can easily bound $v$ from below as follows
\[
\begin{split}
v(x) \geqslant &\int_E {|x - y|^p u(y)^{ - q}dy}  \geqslant  \frac{1}{R^q}\int_E {|x - y|^p dy} =\frac{1}{R^q}\int_{E+x} {|y|^p dy} 
\end{split}
\]
for any $x \in \Rset^n$. Choose $\varepsilon > 0$ small enough and then fix it in such a way that $\vol (B(0,\varepsilon)) < |E| /2$. Then we can estimate
\[
\begin{split}
\int_{E+x} {|y|^p dy} & \geqslant \int_{E+x \backslash B(0,\varepsilon)} {|y|^p dy}  \geqslant \varepsilon^p \int_{E+x \backslash B(0,\varepsilon)} dy \\
&  = \varepsilon^p \big( |E+x| -\vol (B(0,\varepsilon)) \big).
\end{split}
\]
From this, it is clear that $v$ is bounded from below by some positive constant. The same reasoning can be applied to $u$. This shows that there exists some constant $C_0>0$ such that 
\begin{equation}\label{eqUVBoundedFromBelow}
u(x), v(x) > C_0
\end{equation}
everywhere in $\Rset^n$. 

\medskip\noindent\textbf{Proof of \eqref{eqGrowth3}}. To improve the bound of $u$ and $v$ in \eqref{eqUVBoundedFromBelow}, we first consider the region $\{|x| \geqslant 2R\}$ where $R$ is given in \eqref{eqSetE}. Note that for every $y \in E \subset B(0,R)$, there holds $|x - y| \geqslant |x| - |y| \geqslant |x|/2$ since $|x| \geqslant 2R$. Therefore
\[
v(x) \geqslant \frac{1}{{{R^q}}}\int_E {|x - y|^p dy} \geqslant \frac{\text{vol}(E)}{(2R)^p} |x|^p
\]
for any $|x| \geqslant 2R$. A similar argument shows that $u(x) \geqslant \text{vol}(E) (2R)^{-p}|x|^p $ in the region $\{ |x| \geqslant 2R\}$. Hence, it is easy to select some large constant $C>1$ such that \eqref{eqGrowth3} holds in the region $\{ |x| \geqslant 2R\}$. Using \eqref{eqUVBoundedFromBelow}, we can decrease $C$, if necessary, to obtain \eqref{eqGrowth3} in the ball $\{ |x| \leqslant 2R\}$.

\medskip\noindent\textbf{Proof of \eqref{eqGrowth1}}. We will only estimate $v$ since $u$ can be estimated in a similar manner. For this purpose, we will first show that $u^{-q} \in L^1(\Rset^n)$. For some $\overline x$ satisfying $1 \leqslant |\overline x| \leqslant 2$, it is clear that
\[\int_{\Rset^n} {|\overline x - y|^p u{{(y)}^{ - q}}dy} = v(\overline x )<+ \infty .\]
Observe that for any $y \in \Rset^n \backslash B(0,4)$, $|\overline x - y | \geqslant |y| -|\overline x | > 1$. Hence
\[
\int_{\Rset^n \backslash B(0,4) } u(y)^{ - q}dy < \int_{\Rset^n} {|\overline x - y|^p u{{(y)}^{ - q}}dy} < +\infty.
\]
In the small ball $B(0,4)$, we notice that
\[
\int_{ B(0,4) } u(y)^{ - q}dy \lesssim \int_{B(0,4)} (1+|y|^p))^{-q} dy < +\infty.
\]
Thus, $u^{-q} \in L^1(\Rset^n)$ as claimed. To conclude \eqref{eqGrowth1}, it suffices to prove that
\begin{equation}\label{eqGrowth1-suffice}
\int_{\Rset^n} |y|^p u(y)^{ - q}dy < +\infty.
\end{equation}
To see \eqref{eqGrowth1-suffice}, we observe that $|y| \leqslant 2|\overline x - y| $ for all $y \in \Rset^n \backslash B(0,4) $. Therefore,
\[
\int_{\Rset^n \backslash B(0,4) } |y|^p u(y)^{ - q}dy\lesssim \int_{\Rset^n \backslash B(0,4)} {|\overline x - y|^p u{{(y)}^{ - q}}dy} < +\infty.
\]
In the small ball $B(0,4)$, it is apparent that
\[
\int_{ B(0,4) } |y|^p u(y)^{ - q}dy \lesssim \int_{B(0,4)}u(y)^{-q} dy < +\infty,
\]
since $u^{-q} \in L^1(\Rset^n)$. Thus \eqref{eqGrowth1-suffice} follows and so does \eqref{eqGrowth1}. 

\medskip\noindent\textbf{Proof of \eqref{eqGrowth2}}. We will only consider the limit $|x|^{-p} v(x)$ as $|x| \to \infty$ since the limit $|x|^{-p} u(x)$ can be proven similarly. Using \eqref{eqIntegralSystem}, we obtain
\begin{equation}\label{eqProofGrowth2-1}
\begin{split}
\mathop {\lim }\limits_{|x| \to \infty } \frac{{v(x)}}{{|x|^p }} =& \mathop {\lim }\limits_{|x| \to \infty } {\int_{\Rset^n} {\frac{{|x - y|^p }}{{|x|^p}} u{{(y)}^{ - q}}dy} }.
\end{split}
\end{equation}
Observe that as $|x| \to +\infty$, $( |x - y|/|x| )^p u{(y)^{ - q}} \to u{(y)^{ - q}}$ almost everywhere $y$ in $\Rset^n$. Hence we can apply the Lebesgue dominated convergence theorem to pass \eqref{eqProofGrowth2-1} to the limit to conclude \eqref{eqGrowth1}, provided we can show that $|x - y|^p |x{|^{ - p}}u{(y)^{ - q}}$ is bounded by some integrable function. To this end, we observe that $|x-y|^p \leqslant (|x|+|y|)^p \lesssim  (|x|^p +|y|^p)$. Hence, if $|x| > 1$ then
\[ \Big( \frac{ |x - y|} {|x|} \Big)^p u (y)^{ - q}  \lesssim (1+|y|^p)u{(y)^{ - q}}.\]
Our proof now follows by observing $ (1 + |y|^p )u (y)^{ - q}  \in L^1(\Rset^n)$ by \eqref{eqGrowth1}.

\medskip\noindent\textbf{Proof of \eqref{eqGrowth4}}. We now observe \eqref{eqGrowth2} to see that there exists some large number $k>1/R$ such that 
\[
\frac{u(x)}{|x|^p} < 1 + \int_{\Rset^n} v(y)^{-q} dy
\]
in $\Rset^n \backslash B(0,kR)$. In the ball $B(0,kR)$, we can easy to estimate $|x-y|^p \lesssim |x|^p + |y|^p$. This will help us to conclude that
\[
u(x) \lesssim (kR)^p \int_{\Rset^n} (1+|y|^p) v(y)^{-q} dy
\]
in the ball $B(0,kR)$. Using the preceding inequality and our estimate for $u$ outside $B(0,kR)$, we obtain the desired estimate. Similarly, our estimate for $v$ follows.  
\end{proof}

In the next result, we will prove a regularity result similar to \cite[Lemma 5.2]{l2004}.

\begin{lemma}\label{lem-Regularity}
For $n \geqslant 1$ and $p,q>0$, let $(u,v)$ be a pair of non-negative Lebesgue measurable functions in $\Rset^n$ satisfying \eqref{eqIntegralSystem}. Then $u$ and $v$ are smooth.
\end{lemma}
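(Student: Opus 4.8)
The plan is to prove smoothness by induction on the order of differentiability, using as inputs the information collected in Lemma \ref{lem-Growth}. Throughout I work with the representation $u(x)=\int_{\Rset^n}|x-y|^p v(y)^{-q}\,dy$ and the analogous one for $v$, together with the facts that $u,v$ are everywhere positive with $u(x),v(x)\geqslant (1+|x|^p)/C$ by \eqref{eqGrowth3} and that $(1+|y|^p)u(y)^{-q},(1+|y|^p)v(y)^{-q}\in L^1(\Rset^n)$ by \eqref{eqGrowth1}. Since $t\mapsto t^{-q}$ is $C^\infty$ on $(0,\infty)$ and $v$ is bounded below by a positive constant, $v^{-q}$ will inherit whatever regularity we manage to prove for $v$; this is the mechanism that drives the bootstrap.

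\textbf{Base case.} First I would check $u,v\in C^0(\Rset^n)$. For $x$ in a bounded set, $|x-y|^p\leqslant C(1+|y|^p)$, so the integrand $|x-y|^p v(y)^{-q}$ is dominated by the fixed $L^1$-function $C(1+|y|^p)v(y)^{-q}$; since $p>0$ the kernel is moreover bounded near the diagonal, so the dominated convergence theorem gives continuity of $u$, and likewise of $v$. In particular $u^{-q},v^{-q}\in C^0(\Rset^n)$.

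\textbf{Inductive step.} Assume $u,v\in C^k(\Rset^n)$; I claim $u,v\in C^{k+1}(\Rset^n)$. Fix $x_0\in\Rset^n$ and a cutoff $\eta\in C^\infty_c(\Rset^n)$ with $\eta\equiv 1$ on $B_2(x_0)$ and $\mathrm{supp}\,\eta\subset B_3(x_0)$, and write
\[
u(x)=\underbrace{\int_{\Rset^n}|x-y|^p\,\eta(y)v(y)^{-q}\,dy}_{=:A(x)}+\underbrace{\int_{\Rset^n}|x-y|^p\,(1-\eta(y))v(y)^{-q}\,dy}_{=:B(x)}.
\]
For $x\in B_1(x_0)$ one has $|x-y|\geqslant 1$ on $\mathrm{supp}(1-\eta)$, where $\bigl|\partial_x^\alpha|x-y|^p\bigr|\leqslant C_\alpha(1+|y|^p)$; since $(1+|y|^p)v(y)^{-q}\in L^1$, one may differentiate under the integral sign arbitrarily often and conclude $B\in C^\infty(B_1(x_0))$. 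For $A$, put $\psi:=\eta v^{-q}$, which lies in $C^k_c(\Rset^n)$ by the induction hypothesis and the positivity of $v$, and write $A(x)=\int_{\Rset^n}|z|^p\psi(x-z)\,dz$. Differentiating the smooth factor $\psi$ — legitimate up to order $k$ by dominated convergence — gives $A\in C^k$ with $\partial^\alpha A(x)=\int_{\Rset^n}|x-y|^p(\partial^\alpha\psi)(y)\,dy$ for $|\alpha|\leqslant k$. For $|\alpha|=k$ I would then take one further derivative, this time of the kernel:
\[
\partial_{x_i}\partial^\alpha A(x)=\int_{\Rset^n} p\,|x-y|^{p-2}(x_i-y_i)\,(\partial^\alpha\psi)(y)\,dy,
\]
which is valid because $\bigl|\partial_{x_i}|x-y|^p\bigr|\lesssim |x-y|^{p-1}$ is locally integrable (as $p-1>-n$ for every $n\geqslant 1$) while $\partial^\alpha\psi$ is bounded with compact support; the standard differentiation-under-the-integral argument, splitting into the regions near and away from the diagonal, shows both the validity of the differentiation and the continuity of the resulting function. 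Hence $A\in C^{k+1}(\Rset^n)$, so $u\in C^{k+1}(B_1(x_0))$, and since $x_0$ is arbitrary, $u\in C^{k+1}(\Rset^n)$; by the symmetry of \eqref{eqIntegralSystem} in $u$ and $v$ the same reasoning gives $v\in C^{k+1}(\Rset^n)$. Iterating over $k$ yields $u,v\in C^\infty(\Rset^n)$.

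\textbf{The main obstacle.} The only genuinely delicate point is the singularity of the kernel $|x-y|^p$ along the diagonal when $p$ is not an even integer: a naive bootstrap that differentiates the kernel $k+1$ times produces $|x-y|^{p-k-1}$, which stops being locally integrable once $k+1\geqslant n+p$, so the scheme would stall after finitely many derivatives. The device above avoids this by transferring all but one of the derivatives onto $v^{-q}$, which the induction hypothesis has already rendered $C^k$, so that the kernel is differentiated exactly once and never becomes more singular than the (always locally integrable) power $|x-y|^{p-1}$. Beyond this, the proof is a matter of organizing the dominated-convergence and local-integrability bookkeeping carefully; all the required integral bounds have been prepared in Lemma \ref{lem-Growth}.
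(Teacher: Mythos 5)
Your proof is correct and follows essentially the same route as the paper: split the integral into a far part, which is $C^\infty$ by differentiating under the integral sign thanks to \eqref{eqGrowth1}, and a near part controlled via the local boundedness of $u^{-q},v^{-q}$ from \eqref{eqGrowth3}--\eqref{eqGrowth4}, followed by a bootstrap. The only difference is that you spell out the bootstrap explicitly (an induction transferring $k$ derivatives onto $\eta v^{-q}$ and placing exactly one on the kernel $|x-y|^p$), whereas the paper invokes it as a standard argument after establishing H\"older continuity.
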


\begin{proof}
Our proof is similar to \cite[Lemma 5.2]{l2004}. Let $R>0$ be arbitrary, we decompose $u$ and $v$ into the following way
\[
\left\{
\begin{split}
 u(x) =& u_R^1(x) + u_R^2(x) = \bigg(\int_{|y| \leqslant 2R} + \int_{|y| > 2R} \bigg)|x - y|^p v{(y)^{ - q}}dy, \\
 v(x) =& v_R^1(x) + v_R^2(x) = \bigg(\int_{|y| \leqslant 2R} + \int_{|y| > 2R} \bigg)|x - y|^p u{(y)^{ - q}}dy. \\ 
\end{split} 
\right.
\]
Using \eqref{eqGrowth1}, we can continuously differentiate $u_R^2$ and $v_R^2$ under the integral sign for any $|x|<R$. Consequently, $u_R^2 \in C^\infty (B(0,R))$ and $v_R^2 \in C^\infty (B(0,R))$. In view of \eqref{eqGrowth3} and \eqref{eqGrowth4}, we know that $u^{-q} \in L^\infty (B(0, 2R))$ which implies that $v_R^1$ is at least H\"older continuous in $B(0,R)$. Similarly, the same argument shows that $u_R^1$ is also at least H\"older continuous in $B(0,R)$. Hence, we have just proven that $u$ and $v$ are at least H\"older continuous in $B(0,R)$. This means that $u$ and $v$ are at least H\"older continuous in the whole space $\Rset^n$ since $R>0$ is arbitrary. A standard bootstrap argument shows that $u \in C^\infty (\Rset^n)$ and similarly $v \in C^\infty (\Rset^n)$.
\end{proof}

Once we obtain the smoothness property for solutions of \eqref{eqIntegralSystem}, we can narrow the range for $q$ as follows.

\begin{proposition}\label{pro-RangeForQ}
For $n \geqslant 1$ and $p,q>0$, it is necessary to have $q \leqslant 1+2n/p$.
\end{proposition}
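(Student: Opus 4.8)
The plan is to derive from the integral system \eqref{eqIntegralSystem} a Pohozaev-type integral identity, using the smoothness of $u$ and $v$ (Lemma~\ref{lem-Regularity}) together with the sharp two-sided bounds $C^{-1}(1+|x|^p)\leqslant u(x),v(x)\leqslant C(1+|x|^p)$ and the integrability in \eqref{eqGrowth1} (Lemma~\ref{lem-Growth}). The first thing I would record is a lower bound for $q$ that will be used throughout: since $u(y)^{-q}\gtrsim (1+|y|^p)^{-q}$, finiteness of $\int_{\Rset^n}(1+|y|^p)u(y)^{-q}\,dy$ forces $\int_{\Rset^n}(1+|y|^p)^{1-q}\,dy<\infty$, hence $p(q-1)>n$, i.e. $q>1+\tfrac np$. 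In particular $u^{1-q},v^{1-q}\in L^1(\Rset^n)$, and in the integrations by parts below the boundary term at radius $\rho$ will be $O(\rho^{\,n+p-pq})$ and hence will vanish as $\rho\to\infty$. I would then check that \eqref{eqIntegralSystem} may be differentiated under the integral sign, $\nabla u(x)=p\int_{\Rset^n}|x-y|^{p-2}(x-y)v(y)^{-q}\,dy$ (and symmetrically for $v$) — the integrand is bounded by $p|x-y|^{p-1}v(y)^{-q}$, which is locally integrable in $y$ for every $n\geqslant1$ and also integrable at infinity because $q>1+\tfrac np$ — and establish the growth bound $|x\cdot\nabla u(x)|,|x\cdot\nabla v(x)|\lesssim (1+|x|)^{p}$.

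The core is to evaluate $\int_{\Rset^n}u(x)^{-q}\big(x\cdot\nabla u(x)\big)\,dx$ in two different ways. On the one hand, $u^{-q}\nabla u=(1-q)^{-1}\nabla(u^{1-q})$, so an integration by parts over $\{|x|\leqslant\rho\}$ (whose boundary term is the vanishing quantity above) yields $\int_{\Rset^n}u^{-q}(x\cdot\nabla u)\,dx=\tfrac{n}{q-1}\int_{\Rset^n}u^{1-q}\,dx$. On the other hand, starting from $u(x)=\int|x-y|^p v(y)^{-q}dy$ and using $x\cdot(x-y)=|x-y|^2+y\cdot(x-y)$ together with $|x-y|^{p-2}(x-y)=-\tfrac1p\nabla_y|x-y|^p$, followed by an integration by parts in $y$, one obtains the pointwise identity
\[
x\cdot\nabla u(x)=(n+p)\,u(x)-q\int_{\Rset^n}|x-y|^p\,v(y)^{-q-1}\,\big(y\cdot\nabla v(y)\big)\,dy.
\]
Multiplying by $u(x)^{-q}$ and integrating in $x$, then invoking Fubini — the corresponding iterated integral with absolute values is finite because $\int|x-y|^p u(x)^{-q}\,dx=v(y)$ by \eqref{eqIntegralSystem} and $v^{-q}|y\cdot\nabla v|\lesssim(1+|y|^p)^{1-q}\in L^1(\Rset^n)$ — and using $\int|x-y|^p u(x)^{-q}\,dx=v(y)$ once more, the last term collapses to $-q\int_{\Rset^n}v(y)^{-q}\big(y\cdot\nabla v(y)\big)\,dy=-\tfrac{qn}{q-1}\int_{\Rset^n}v^{1-q}\,dy$. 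Writing $U:=\int_{\Rset^n}u^{1-q}\,dx>0$ and $V:=\int_{\Rset^n}v^{1-q}\,dx>0$ this gives
\[
\frac{n}{q-1}\,U=(n+p)\,U-\frac{qn}{q-1}\,V,\qquad\text{i.e.}\qquad qn\,V=\big[(n+p)(q-1)-n\big]\,U.
\]

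Performing the same computation with the roles of $u$ and $v$ interchanged yields $qn\,U=\big[(n+p)(q-1)-n\big]\,V$. Adding the two identities and cancelling the strictly positive factor $U+V$ forces $qn=(n+p)(q-1)-n$, that is $pq=p+2n$, i.e. $q=1+\tfrac{2n}{p}$; in particular $q\leqslant1+\tfrac{2n}{p}$, as claimed (indeed the exponent is thereby pinned down exactly). The algebra is elementary; the genuine obstacle is the analytic bookkeeping — verifying that no boundary term survives at infinity in either integration by parts and that Fubini applies to the double integral — which rests entirely on the sharp decay $u(x),v(x)\sim c\,|x|^p$ at infinity from Lemma~\ref{lem-Growth} and on the consequence $q>1+\tfrac np$ of \eqref{eqGrowth1}. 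The most delicate individual point is the differentiation under the integral sign and the gradient bound when $0<p\leqslant1$, where $|x-y|^{p-2}$ is singular along the diagonal and $|y|\not\lesssim1+|y|^p$: there one should split the defining integral into the regions $\{|y|\leqslant|x|/2\}$, $\{|x|/2<|y|<2|x|\}$ and $\{|y|\geqslant2|x|\}$ (or argue via the fundamental theorem of calculus along segments together with Fubini) in order to obtain the sharp rate $|\nabla u(x)|\lesssim|x|^{p-1}$ for large $|x|$, which is precisely what makes $\int_{\Rset^n}u^{-q}|x\cdot\nabla u|\,dx$ converge.
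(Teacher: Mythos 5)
Your proposal is correct, but it proves the statement by a genuinely different route than the paper. The paper argues by contradiction: assuming $q>1+2n/p$, it invokes the regularity result (Lemma \ref{lem-Regularity}) together with the external necessary-condition theorems of Hua--Yu \cite[Theorem 1]{hy2013} and Lei \cite[Theorem 1.1]{lei2015}, and the authors even devote a paragraph to explaining why those citations are only safely applicable under the contradiction hypothesis. You instead give a self-contained Pohozaev-type argument: the preliminary observation $p(q-1)>n$ from \eqref{eqGrowth1} and \eqref{eqGrowth4}, the two evaluations of $\int u^{-q}(x\cdot\nabla u)\,dx$ (radial integration by parts on one side; the pointwise identity $x\cdot\nabla u=(n+p)u-q\int|x-y|^p v^{-q-1}(y\cdot\nabla v)\,dy$ plus Fubini and $\int|x-y|^p u^{-q}dx=v(y)$ on the other), and the symmetric identity in $v$, which together force $qn=(n+p)(q-1)-n$, i.e.\ $q=1+2n/p$. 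I checked the algebra and the analytic bookkeeping: the boundary terms are $O(\rho^{\,n+p-pq})$ and vanish precisely because $p(q-1)>n$, the Fubini bound via $v^{-q}|y\cdot\nabla v|\lesssim(1+|y|^p)^{1-q}\in L^1$ is valid, and the one genuinely delicate point --- differentiation under the integral sign and the decay $|\nabla u(x)|\lesssim|x|^{p-1}$ when $0<p\leqslant1$ --- is correctly identified by you, and your proposed fix (splitting into the three regions, or the segment/Fubini argument) does work. What the two approaches buy: the paper's proof is two lines but outsources the substance to results whose hypotheses the authors themselves describe as hard to verify outside the contradiction regime; yours is longer but independent of \cite{hy2013,lei2015}, and it actually delivers the stronger exact balance $q=1+2n/p$ directly, a fact the paper only recovers later through the moving-spheres analysis (Lemma \ref{lem3}). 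In effect you have reproved, in this specific setting, the identity underlying the cited theorems, which strengthens rather than weakens the argument.
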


\begin{proof}
We suppose by contradiction that $q > 1+2n/p$. Then the proof is a direct consequence of \cite[Theorem 1]{hy2013} and Lemma \ref{lem-Regularity}; see also \cite[Theorem 1.1]{lei2015}; hence we omit its details.
\end{proof}

We note that the statement in \cite[Theorem 1]{hy2013} is rather general as it already concludes the necessary condition for \eqref{eqIntegralSystem} to have solutions, which is when $q = 1+2n/p$. Unfortunately, it is not clear for us to check whether \cite[Eq. (20)]{hy2013} holds; hence we cannot exclude the possibility of $q < 1+2n/p$. The only argument that the authors gave to support \cite[Eq. (20)]{hy2013} is to follow the argument in \cite[Proof of Theorem 1]{xu2007}. Fortunately, a simple computation shows that such an argument works provided $q > 1+2n/p$, which is coincidentally our contradiction assumption; therefore this is sufficient for us to conclude the proof above. 

In the same spirit, we also want to mention that \cite[Theorem 1.1]{lei2015} concludes $q = 1+2n/p$ provided $q> 1+n/p$. Hence, we cannot directly conclude $q = 1+2n/p$ without providing certain conditions for $p$ and $q$. However, under our contradiction assumption, it is safe to make use of either \cite[Theorem 1]{hy2013} or \cite[Theorem 1.1]{lei2015} to narrow the range of $q$ as we have just done above.

We also note, after establishing the condition $q \leqslant 1+2n/p$, that eventually we shall see that $q = 1+2n/p$. In view of the compatible condition $1/p + 1/r -\lambda /n =2$, it is rigorous to see that the condition $q = 1+2n/p$ follows from the condition $1/p + 1/r -\lambda /n =2$ if we set $p=r$ and $p=\lambda=q$.

\subsection{The method of moving spheres for systems}

As a consequence of Proposition \ref{pro-RangeForQ}, from now on, we will only consider the case $q \leqslant 1+2n/p$. Let $w$ be a positive function on $\Rset^n$. For $x \in \Rset^n$ and $\lambda>0$ we define
\begin{equation}\label{eqFunctionChange}
{w_{x,\lambda }}( \xi ) =  \Big( \frac{|\xi-x|}{\lambda } \Big)^p w({ \xi^{x,\lambda }})
\end{equation}
for all $\xi \in \Rset^n$ where
\begin{equation}\label{eqVariableChange}
{\xi^{x,\lambda }} = x + {\lambda ^2}\frac{{\xi - x}}{{{{\left| {\xi - x} \right|}^2}}}.
\end{equation}
By changing the variable $y=z^{x,\lambda}$, we have
\begin{equation}\label{eqJacobian}
dy=\Big( \frac{\lambda}{|z-x|} \Big)^{2n} dz.
\end{equation}
Note that if $y=z^{x,\lambda}$, then $z=y^{x,\lambda}$. Therefore,
\[\begin{split}
\int_{|y-x| \geqslant \lambda} |\xi^{x,\lambda} - y|^p v(y)^{-q} dy =& \int_{|z-x| \leqslant \lambda} \frac{ |\xi^{x,\lambda} - z^{x,\lambda}|^p }{ v(z^{x,\lambda})^{q} } \Big( \frac{\lambda}{|z-x|} \Big)^{2n} dz \\
=& \int_{|z-x| \leqslant \lambda}  \frac{   |\xi^{x,\lambda} - z^{x,\lambda}|^p  }{  v_{x,\lambda} (z)^q } \Big( \frac{\lambda}{|z-x|} \Big)^{2n-pq}  dz.
\end{split}\]
Using the relation $|z - x| |\xi - x| |{\xi ^{x,\lambda }} - {z^{x,\lambda }}| = \lambda^2 |\xi - z|$, we obtain
\[\begin{split}
{\Big( {\frac{\lambda } {|\xi - x|}} \Big)^{ - p}} & \int_{|y - x| \geqslant \lambda } | {\xi ^{x,\lambda }} - y|^p v{(y)^{ - q}}dy\\
 =& \int_{|y - x| \geqslant \lambda } {{{\left( {\frac{\lambda } {|z - x|} \frac{{|{\xi ^{x,\lambda }} - {z^{x,\lambda }}|}}{{|\xi - z|}}} \right)}^{ - p}}|} {\xi ^{x,\lambda }} - y|^p v{(y)^{ - q}}dy\\
 =& \int_{|z - x| \leqslant \lambda } | \xi - z|^p {\Big( {\frac{\lambda } {|z - x|} } \Big)^{2n - pq + p}} v_{x,\lambda}(z)^{ - q}dz.
\end{split}\]
Similarly,
\[\begin{split}
{\Big( {\frac{\lambda } {|\xi - x|}} \Big)^{ - p}} & \int_{|y - x| \leqslant \lambda } | {\xi ^{x,\lambda }} - y|^p v{(y)^{ - q}}dy\\
 =& \int_{|z - x| \geqslant \lambda } | \xi - z|^p {\Big( {\frac{\lambda } {|z - x|} } \Big)^{2n - pq + p}} v_{x,\lambda}(z)^{ - q}dz.
\end{split}\]

\begin{lemma}\label{lem0}
For any solution $(u,v)$ of \eqref{eqIntegralSystem}, we have
\[{u_{x,\lambda }}(\xi ) = \int_{\Rset^n} | \xi - z|^p {\Big( {\frac{\lambda } {|z - x|} } \Big)^{2n - pq + p}}{v_{x,\lambda }}{(z)^{ - q}}dz\]
and
\[{v_{x,\lambda }}(\xi ) = \int_{\Rset^n} | \xi - z|^p {\Big( {\frac{\lambda } {|z - x|} } \Big)^{2n - pq + p}}{u_{x,\lambda }}{(z)^{ - q}}dz \]
for any $\xi \in \Rset^n$.
\end{lemma}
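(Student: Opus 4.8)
The plan is to derive both identities directly from the integral system \eqref{eqIntegralSystem} together with the two transformation formulas established immediately above the lemma. First I would start from the defining equation $u(\xi^{x,\lambda}) = \int_{\Rset^n} |\xi^{x,\lambda} - y|^p v(y)^{-q}\, dy$, which is just \eqref{eqIntegralSystem} evaluated at the inverted point $\xi^{x,\lambda}$. Multiplying through by $(\lambda/|\xi - x|)^p$ and recalling the definition $u_{x,\lambda}(\xi) = (|\xi - x|/\lambda)^p u(\xi^{x,\lambda})$ from \eqref{eqFunctionChange}, we obtain
\[
u_{x,\lambda}(\xi) = \Big(\frac{\lambda}{|\xi - x|}\Big)^{-p} \int_{\Rset^n} |\xi^{x,\lambda} - y|^p v(y)^{-q}\, dy.
\]
Next I would split the integral on the right as $\int_{|y-x|\geqslant \lambda} + \int_{|y-x|\leqslant \lambda}$ and apply the two displayed formulas directly preceding the lemma to each piece: the first formula converts $\int_{|y-x|\geqslant\lambda}$ into $\int_{|z-x|\leqslant\lambda} |\xi - z|^p (\lambda/|z-x|)^{2n-pq+p} v_{x,\lambda}(z)^{-q}\, dz$, and the second converts $\int_{|y-x|\leqslant\lambda}$ into $\int_{|z-x|\geqslant\lambda} |\xi - z|^p (\lambda/|z-x|)^{2n-pq+p} v_{x,\lambda}(z)^{-q}\, dz$. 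Adding the two pieces recombines the region of integration into all of $\Rset^n$, yielding precisely the claimed identity for $u_{x,\lambda}$.

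The identity for $v_{x,\lambda}$ follows by the identical computation, starting instead from the second equation in \eqref{eqIntegralSystem}, namely $v(\xi^{x,\lambda}) = \int_{\Rset^n} |\xi^{x,\lambda} - y|^p u(y)^{-q}\, dy$, and running the same splitting and substitution argument with the roles of $u$ and $v$ interchanged. Since the two transformation formulas above the lemma are stated for a generic positive function (here realized as $v$, respectively $u$), nothing new is needed.

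There is essentially no serious obstacle here: the lemma is a bookkeeping consequence of the Kelvin-type change of variables \eqref{eqVariableChange}–\eqref{eqJacobian} and the conformal identity $|z-x||\xi-x||\xi^{x,\lambda} - z^{x,\lambda}| = \lambda^2|\xi - z|$, all of which are recorded just before the statement. The only point requiring a little care is verifying that the two region-splitting pieces have matching integrand forms so that they genuinely reassemble into an integral over $\Rset^n$ — in particular that the exponent $2n - pq + p$ is the same in both formulas, which it is by inspection — and that the involution property $(\xi^{x,\lambda})^{x,\lambda} = \xi$ is used correctly when relabeling the dummy variable. I would also note in passing, as a sanity check, that when $2n - pq + p = 0$, i.e. $q = 1 + 2n/p$, the weight $(\lambda/|z-x|)^{2n-pq+p}$ becomes trivial and the transformed system has exactly the same form as the original, which is the conformal invariance that motivates the method of moving spheres; but this observation is not needed for the proof of the lemma itself.
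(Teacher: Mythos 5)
Your proposal is correct and follows exactly the paper's route: the paper likewise starts from $u_{x,\lambda}(\xi)=(|\xi-x|/\lambda)^p u(\xi^{x,\lambda})$, inserts the system at the inverted point, and invokes the two transformation formulas displayed just before the lemma (which already encode the split over $\{|y-x|\geqslant\lambda\}$ and $\{|y-x|\leqslant\lambda\}$ and their recombination over $\Rset^n$), with the $v$-identity obtained symmetrically. No gap; your added remarks on the involution and the exponent $2n-pq+p$ are just the bookkeeping the paper leaves implicit.
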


\begin{proof}
Using our system \eqref{eqIntegralSystem}, we obtain
\[\begin{split}
{u_{x,\lambda }}(\xi ) =& {\Big( {\frac {|\xi - x|}{\lambda }} \Big)^p}\int_{\Rset^n} | {\xi ^{x,\lambda }} - y|^p v{(y)^{ - q}}dy\\
 =& \int_{\Rset^n} | \xi - z|^p {\Big( {\frac{\lambda } {|z - x|} } \Big)^{2n - pq + p}}{v_{x,\lambda }}{(z)^{ - q}}dz.
\end{split}\]
The formula for $v$ follows the same line of argument as above.
\end{proof}

\begin{lemma}\label{lem1}
For any solution $(u,v)$ of \eqref{eqIntegralSystem}, we have
\[{u_{x,\lambda }}(\xi ) - u(\xi ) = \int_{|z - x| \geqslant \lambda } k(x,\lambda ;\xi ,z) \bigg[v{(z)^{ - q}} - {\Big( {\frac{\lambda } {|z - x|} } \Big)^{2n - pq + p}}{v_{x,\lambda }}{(z)^{ - q}} \bigg] dz\]
and
\[{v_{x,\lambda }}(\xi ) - v (\xi ) = \int_{|z - x| \geqslant \lambda } k(x,\lambda ;\xi ,z) \bigg[ u{(z)^{ - q}} - {\Big( {\frac{\lambda } {|z - x|} } \Big)^{2n - pq + p}}{u_{x,\lambda }}{(z)^{ - q}} \bigg] dz\]
for any $\xi \in \Rset^n$ where
\[k(x,\lambda ;\xi ,z) = \Big( {\frac {|\xi - x|}{\lambda }} \Big)^p |{\xi ^{x,\lambda }} - z|^p - |\xi - z|^p .\]
Moreover, $k(x,\lambda; \xi,z)>0$ for any $|\xi - x| > \lambda>0$ and $|z - x| > \lambda>0$.
\end{lemma}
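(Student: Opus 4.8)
The plan is to derive both identities from the Kelvin-type transformation formulas established just before Lemma \ref{lem0}, and then prove positivity of the kernel $k$ by a direct elementary estimate. First I would write, using Lemma \ref{lem0} and the system \eqref{eqIntegralSystem} for $u$,
\[
{u_{x,\lambda }}(\xi ) - u(\xi ) = \int_{\Rset^n} |\xi - z|^p \bigg[ {\Big( {\frac{\lambda } {|z - x|} } \Big)^{2n - pq + p}}{v_{x,\lambda }}{(z)^{ - q}} - v(z)^{-q} \bigg] dz.
\]
The integrand is not yet in the stated form, so the next step is to fold the two complementary regions $\{|z-x|\geqslant\lambda\}$ and $\{|z-x|\leqslant\lambda\}$ onto one another via the inversion $z\mapsto z^{x,\lambda}$. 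Concretely, on $\{|z-x|\leqslant\lambda\}$ I substitute $z = w^{x,\lambda}$ (so that $w=z^{x,\lambda}$ ranges over $\{|w-x|\geqslant\lambda\}$), use the Jacobian \eqref{eqJacobian}, the identity $|z-x||\xi-x||\xi^{x,\lambda}-z^{x,\lambda}| = \lambda^2|\xi-z|$ (equivalently its version relating $|\xi-w^{x,\lambda}|$ to $|\xi^{x,\lambda}-w|$), and the definition \eqref{eqFunctionChange} of $v_{x,\lambda}$. A bookkeeping check on the exponents — exactly the kind already carried out in the display preceding Lemma \ref{lem0} — shows that the contribution of the region $\{|z-x|\leqslant\lambda\}$ equals
\[
\int_{|z-x|\geqslant\lambda} {\Big( {\frac{\lambda } {|\xi - x|} } \Big)^p} |\xi^{x,\lambda}-z|^p \bigg[ v(z)^{-q} - {\Big( {\frac{\lambda } {|z - x|} } \Big)^{2n - pq + p}}{v_{x,\lambda }}{(z)^{ - q}} \bigg] dz,
\]
while the region $\{|z-x|\geqslant\lambda\}$ contributes the same bracket multiplied by $|\xi-z|^p$ with the opposite overall sign. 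Combining the two gives the claimed formula with kernel $k(x,\lambda;\xi,z) = ({|\xi - x|}/{\lambda})^p |\xi^{x,\lambda}-z|^p - |\xi-z|^p$; the identity for $v$ is obtained verbatim by swapping the roles of $u$ and $v$.

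For the positivity claim, fix $\xi,z$ with $|\xi-x|>\lambda$ and $|z-x|>\lambda$, and set $\alpha=|\xi-x|$, $\beta=|z-x|$. Using $|z - x| |\xi - x| |{\xi ^{x,\lambda }} - z^{x,\lambda}| = \lambda^2 |\xi - z|$ is not directly applicable here since $z$ is not inverted; instead I would compute $({\alpha}/{\lambda})^{2}|\xi^{x,\lambda}-z|^2 - |\xi-z|^2$ directly. Writing $\xi^{x,\lambda}-z = x + \lambda^2(\xi-x)/\alpha^2 - z$, one expands
\[
\Big( \frac{\alpha}{\lambda} \Big)^2 |\xi^{x,\lambda}-z|^2 = \frac{\alpha^2}{\lambda^2}\Big( \frac{\lambda^4}{\alpha^2} + |x-z|^2 + 2\frac{\lambda^2}{\alpha^2}(\xi-x)\cdot(x-z) \Big) = \lambda^2 + \frac{\alpha^2}{\lambda^2}\beta^2 + 2(\xi-x)\cdot(x-z),
\]
whereas $|\xi-z|^2 = \alpha^2 + \beta^2 + 2(\xi-x)\cdot(x-z)$. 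Subtracting, the cross terms cancel and
\[
\Big( \frac{\alpha}{\lambda} \Big)^2 |\xi^{x,\lambda}-z|^2 - |\xi-z|^2 = \lambda^2 + \frac{\alpha^2\beta^2}{\lambda^2} - \alpha^2 - \beta^2 = \frac{(\alpha^2-\lambda^2)(\beta^2-\lambda^2)}{\lambda^2} > 0,
\]
using $\alpha>\lambda$ and $\beta>\lambda$. Hence $(\alpha/\lambda)|\xi^{x,\lambda}-z| > |\xi-z|$, and raising to the power $p>0$ gives $k(x,\lambda;\xi,z)>0$.

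I expect the main obstacle to be purely organizational rather than conceptual: keeping the exponent $2n-pq+p$ and the various factors of $\lambda/|z-x|$ and $\lambda/|\xi-x|$ straight through the inversion, and making sure the two regions recombine with the correct signs so that the bracket $[v(z)^{-q} - (\lambda/|z-x|)^{2n-pq+p} v_{x,\lambda}(z)^{-q}]$ appears cleanly. This is entirely parallel to the manipulation already displayed before Lemma \ref{lem0}, so no genuinely new difficulty arises; the positivity of $k$ reduces to the factorization $(\alpha^2-\lambda^2)(\beta^2-\lambda^2)/\lambda^2$ exhibited above and is immediate.
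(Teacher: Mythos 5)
Your proposal is correct and follows essentially the same route as the paper: both arguments rest on the inversion identities displayed before Lemma \ref{lem0} (splitting at the sphere $|z-x|=\lambda$, substituting $z\mapsto z^{x,\lambda}$ with the Jacobian \eqref{eqJacobian}, the distance identity and the definition \eqref{eqFunctionChange}), and both get positivity of $k$ from the factorization $\big(|\xi-x|/\lambda\big)^2|\xi^{x,\lambda}-z|^2-|\xi-z|^2=(|\xi-x|^2-\lambda^2)(|z-x|^2-\lambda^2)/\lambda^2$ together with $p>0$. The only blemish is a typo in your intermediate display for the contribution of $\{|z-x|\leqslant\lambda\}$, where the prefactor should be $\big(|\xi-x|/\lambda\big)^p$ rather than $\big(\lambda/|\xi-x|\big)^p$; the kernel you state at the end is the correct one.
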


\begin{proof}
We observe that
\[\begin{split}
{u_{x,\lambda }}(\xi ) =& \int_{|z - x| \geqslant \lambda } | \xi - z|^p {\Big( {\frac{\lambda } {|z - x|} } \Big)^{2n - pq + p}}{v_{x,\lambda }}{(z)^{ - q}}dz\\
& + {\Big( {\frac {|\xi - x|}{\lambda }} \Big)^p}\int_{|z - x| \geqslant \lambda } | {\xi ^{x,\lambda }} - z|^p v{(z)^{ - q}}dz
\end{split}\]
and that
\[\begin{split}
u(\xi ) =& \int_{|z - x| \geqslant \lambda } | {\xi ^{x,\lambda }} - z|^p v{(z)^{ - q}}dz\\
& + {\Big( {\frac {|\xi - x|}{\lambda }} \Big)^p}\int_{|z - x| \geqslant \lambda } | {\xi ^{x,\lambda }} - z|^p {\Big( {\frac{\lambda } {|z - x|} } \Big)^{2n - pq + p}}{v_{x,\lambda }}{(z)^{ - q}}dz.
\end{split}\]
Therefore,
\[{u_{x,\lambda }}(\xi ) - u(\xi ) = \int_{|z - x| \geqslant \lambda } k(x,\lambda ;\xi ,z) \bigg[v{(z)^{ - q}} - {\Big( {\frac{\lambda } {|z - x|} } \Big)^{2n - pq + p}}{v_{x,\lambda }}{(z)^{ - q}} \bigg] dz,\]
where
\[k(x,\lambda ;\xi ,z) = {\Big( {\frac {|\xi - x|}{\lambda }} \Big)^p}|{\xi ^{x,\lambda }} - z|^p - |\xi - z|^p .\]
The representation of $(v_{x,\lambda }- v)(\xi )$ can be obtain in a similar manner. Finally, the positivity of the kernel $k$ for any $|\xi - x| > \lambda $ and $|z - x| > \lambda $ is apparent using the formula
\[
\Big( {\frac {|\xi - x|}{\lambda }} \Big)^2 |{\xi ^{x,\lambda }} - z|^2 - |\xi - z|^2 = \frac 1{\lambda^2} \big( \lambda^2 - |z-x|^2\big) \big( \lambda^2 - |\xi -x|^2\big).
\]
Thus the proof follows.
\end{proof}

For future use, we note that $|\xi - x| |\xi^{x,\lambda} -z| = |z-x| |z^{x,\lambda} - \xi|$; hence we can rewrite the kernel $k$ as follows
\[k(x,\lambda ;\xi ,z) = {\Big( {\frac {|z - x|} {\lambda }} \Big)^p}| \xi - z^{x,\lambda } |^p - |\xi - z|^p .\]
Therefore, each component of $\nabla_\xi k(x,\lambda ;\xi ,z)$ can be easily calculated as
\begin{equation}\label{eqGradientOfK}
\begin{split}
\partial_{\xi_i} k(x,\lambda ;\xi ,z) = & p {\Big( {\frac {|z - x|} {\lambda }} \Big)^p}|\xi - z^{x,\lambda } |^{p-2} \big(\xi_i - (z^{x,\lambda })_i \big) \\
& - p |\xi - z|^{p-2} (\xi_i - z_i) \\
= & p {\Big( {\frac {|z - x|} {\lambda }} \Big)^p} \bigg(\frac{ |\xi - x|}{ |z-x|} |\xi^{x,\lambda} -z| \bigg) ^{p-2} \big(\xi_i - (z^{x,\lambda })_i \big) \\
& - p |\xi - z|^{p-2} (\xi_i - z_i) .
\end{split}
\end{equation}
In particular, 
\begin{equation}\label{eqGradientOfKAtX}
\begin{split}
\nabla k(x,\lambda ;\xi ,z) \cdot \xi = & p {\frac{ |z - x|^2 |\xi - x|^{p-2} }{\lambda^p } } |\xi^{x,\lambda} -z| ^{p-2} \big(|\xi|^2 - z^{x,\lambda }\cdot \xi \big) \\
& - p |\xi - z|^{p-2} (|\xi|^2 - z \cdot \xi ).
\end{split}
\end{equation}

In the following lemma, we will prove that we can apply the method of moving spheres.% can get started by starting from a very small radius.

\begin{lemma}\label{lemStartMS}
For each $x \in \Rset^n$, there exists $\lambda_0(x)>0$ such that
\[{u_{x,\lambda }}(y) \geqslant u(y), \quad {v_{x,\lambda }}(y) \geqslant v(y)\]
for any point $y \in \Rset^n$ and any $\lambda$ such that $|y-x| \geqslant \lambda$ with $0 < \lambda< \lambda_0(x)$.
\end{lemma}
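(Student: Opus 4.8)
The plan is to show that, for $\lambda$ small, the difference $u_{x,\lambda} - u$ is non-negative near $x$ (a short-range estimate exploiting that $u$ is strictly positive and bounded below) and also non-negative far from $x$ (a long-range estimate exploiting the growth $u(x) \sim |x|^p$ from Lemma \ref{lem-Growth}), and then combine the two with the integral representation from Lemma \ref{lem1}. Without loss of generality we may take $x = 0$ by translation, and write $\lambda_0 = \lambda_0(0)$. Recall from \eqref{eqFunctionChange} that $u_{0,\lambda}(\xi) = (|\xi|/\lambda)^p u(\lambda^2 \xi/|\xi|^2)$, and similarly for $v$.

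First I would handle the region close to the origin. Since by \eqref{eqGrowth3}--\eqref{eqGrowth4} we have $C^{-1}(1+|z|^p) \leqslant u(z), v(z) \leqslant C(1+|z|^p)$, for $|\xi| \leqslant r_0$ (some fixed small $r_0$) and $\lambda < \lambda_0 \ll r_0$ the inverted point $\xi^{0,\lambda} = \lambda^2 \xi/|\xi|^2$ has large modulus $|\xi^{0,\lambda}| = \lambda^2/|\xi|$, so $u(\xi^{0,\lambda}) \gtrsim 1$; then $u_{0,\lambda}(\xi) = (|\xi|/\lambda)^p u(\xi^{0,\lambda}) \gtrsim (|\xi|/\lambda)^p \to \infty$ as $\lambda \to 0$ uniformly on, say, an annulus, while $u(\xi) \leqslant C(1+r_0^p)$ stays bounded; a careful version of this comparison (splitting into $|\xi|$ comparable to $\lambda$ versus $|\xi|$ much larger than $\lambda$, using $u(\xi^{0,\lambda}) \gtrsim 1+|\xi^{0,\lambda}|^p = 1 + \lambda^{2p}/|\xi|^{2p}$ in the former) gives $u_{0,\lambda}(\xi) \geqslant u(\xi)$ for all $0 < |\xi| \leqslant r_0$ once $\lambda$ is small, and the same for $v$.

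For the region far from the origin, I would use the representation in Lemma \ref{lem1}: since $k(0,\lambda;\xi,z) > 0$ whenever $|\xi|, |z| > \lambda$, it suffices to control the sign of the bracket $v(z)^{-q} - (\lambda/|z|)^{2n-pq+p} v_{0,\lambda}(z)^{-q}$, i.e.\ to show $v_{0,\lambda}(z)^{-q} (\lambda/|z|)^{2n-pq+p} \leqslant v(z)^{-q}$ for $|z| \geqslant \lambda$, which after unravelling \eqref{eqFunctionChange} amounts to $v(z^{0,\lambda}) \geqslant v(z) (\lambda/|z|)^{(2n-pq+p - p + pq)/(\ldots)}$ — more cleanly, to $(\lambda/|z|)^{2n} v(z^{0,\lambda})^{-q} \cdot (\text{power}) \le v(z)^{-q}$, which reduces via $q \leqslant 1 + 2n/p$ and the bounds $v(w) \sim 1 + |w|^p$ to an elementary inequality in $|z|/\lambda$. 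The point is that for $|z|$ large, $v(z) \sim |z|^p$ while $v(z^{0,\lambda}) \sim 1$ since $|z^{0,\lambda}| = \lambda^2/|z|$ is small, and the exponent bookkeeping works out precisely because $q \leqslant 1+2n/p$, i.e.\ $2n - pq + p \geqslant 0$ is not quite what is needed but the relevant combination has the right sign. Then the two brackets in Lemma \ref{lem1} for $u$ and $v$ are both non-negative for $|z|$ large, which together with the near-origin estimate feeds back through the positive kernel to yield $u_{0,\lambda} \geqslant u$ and $v_{0,\lambda} \geqslant v$ everywhere, closing a bootstrap between the two equations.

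The main obstacle will be making the far-field estimate genuinely uniform: the bracket's sign must be controlled at \emph{all} scales $|z| \geqslant \lambda$ simultaneously, not just asymptotically, and at intermediate scales $|z| \approx \lambda$ both $v(z)$ and $v(z^{0,\lambda})$ are of the same order, so the argument there rests entirely on the exponent identity coming from $q \leqslant 1 + 2n/p$ together with the constant $C$ in Lemma \ref{lem-Growth}; one must check that $\lambda_0$ can be chosen depending only on $C$, $p$, $q$, $n$ and not on $z$. A secondary subtlety is that the near-origin and far-field regions must be arranged to overlap (or at least cover $\Rset^n \setminus \{0\}$) for the chosen $\lambda_0$, so the two estimates have to be proved with compatible thresholds.
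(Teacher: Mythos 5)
There is a genuine gap, and it sits exactly where you flag your ``main obstacle'': the scale $|y-x|$ comparable to $\lambda$. On the sphere $|y-x|=\lambda$ one has $u_{x,\lambda}(y)=u(y)$ identically, so the desired inequality just outside that sphere is a first-order statement; it can never follow from the two-sided bounds $C^{-1}(1+|w|^p)\leqslant u(w),v(w)\leqslant C(1+|w|^p)$ of Lemma \ref{lem-Growth}, because the constant $C\geqslant 1$ swallows the comparison precisely when $u_{x,\lambda}(y)$ and $u(y)$ are of the same order (take $|y-x|=\lambda$: your lower bound for $u_{0,\lambda}(\xi)$ is $C^{-1}$ times a quantity of order one, while $u(\xi)$ may be as large as $C$ times a quantity of order one). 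The paper supplies the missing first-order input from Lemma \ref{lem-Regularity}: since $u$ is positive, $C^1$ and $p>0$, there is $r_0>0$ with $\nabla_y\big(|y-x|^{-p/2}u(y)\big)\cdot(y-x)<0$ for $0<|y-x|<r_0$ (the singular factor $|y-x|^{-p/2}$ dominates the bounded gradient of $u$); since $y^{x,\lambda}$ lies on the same ray through $x$ with $|y^{x,\lambda}-x|=\lambda^2/|y-x|<|y-x|$, this radial monotonicity gives directly $u_{x,\lambda}(y)=|y-x|^{p/2}\,|y^{x,\lambda}-x|^{-p/2}\,u(y^{x,\lambda})\geqslant u(y)$ for all $0<\lambda<|y-x|<r_0$, with no restriction on how close $|y-x|$ is to $\lambda$. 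This is the idea your proposal lacks. (A small slip as well: for $|\xi|\geqslant\lambda$ the inverted point satisfies $|\xi^{0,\lambda}|=\lambda^2/|\xi|\leqslant\lambda$, so its modulus is small, not large.)

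Your far-field step has a second problem: routing it through Lemma \ref{lem1} is circular. Non-negativity of the brackets there, e.g.\ $v(z)^{-q}-(\lambda/|z-x|)^{2n-pq+p}v_{x,\lambda}(z)^{-q}\geqslant 0$ on $|z-x|\geqslant\lambda$, is (up to the factor $(\lambda/|z-x|)^{2n-pq+p}\leqslant 1$) essentially the statement $v_{x,\lambda}\geqslant v$ that you are trying to prove, and it again degenerates at $|z-x|\approx\lambda$; no independent mechanism (say, a continuity or openness argument in $\lambda$) is offered to break the circle. The paper avoids Lemma \ref{lem1} entirely here: for $|y-x|\geqslant r_0$ it simply bounds $u_{x,\lambda}(y)\geqslant(|y-x|/\lambda)^p\inf_{B(x,r_0)}u$, which for $0<\lambda<\lambda_0$ small dominates $u(y)\lesssim 1+|y|^p$ by \eqref{eqGrowth4}. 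Your worry about uniformity of the constants is therefore real, and it is resolved only by replacing the crude two-sided bounds with the derivative argument near $x$ and this elementary scaling bound far from $x$.
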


\begin{proof}
Since $u$ is a positive $C^1$-function and $p>0$, there exists some $r_0>0$ sufficiently small such that
\[\nabla _y \big( |y-x|^{-p/2} u(y) \big) \cdot (y-x) < 0\]
for all $0<|y-x|<r_0$. Consequently, 
\[
\begin{split}
u_{x,\lambda} (y) =& {\Big( {\frac{{|y-x|}}{\lambda }} \Big)^p}u (y^{x,\lambda}) = |y-x|^{p/2} |y^{x,\lambda}-x|^{-p/2} u (y^{x,\lambda}) >  u(y)
\end{split}
\]
for all $0<\lambda < |y-x| < r_0$. Note that in the previous estimate, we made use of the fact that if $|y-x| > \lambda$, then $|y^{x,\lambda} -x| < \lambda$. For sufficiently small $\lambda_0 \in (0, r_0)$ and for all $0<\lambda<\lambda_0$, we have
\[ 
u_{x, \lambda }(y) \geqslant {\Big( {\frac{{|y-x|}}{\lambda }} \Big)^p}\mathop {\inf }\limits_{B(x, r_0)} u \geqslant u(y)
\]
for all $|y-x| \geqslant r_0$. Hence, we have just shown that $u_{x, \lambda }(y) \geqslant u(y)$ for all point $y \in \Rset^n$ and any $\lambda$ such that $|y-x| \geqslant \lambda$ with $0 < \lambda< \lambda_0$. A similar argument shows that $v_{x, \lambda }(y) \geqslant v(y)$ for all point $y$ and any $\lambda$ such that $|y-x| \geqslant \lambda$ with $0 < \lambda< \lambda_1$ for some $\lambda_1 \in (0, r_1)$. By choosing $\lambda_0 (x) = \min\{ \lambda_0, \lambda_1\}$, we obtain the desired result.
\end{proof}

For each $x \in \Rset^n$ we define
\[
\overline \lambda (x) := \sup \left\{ {\mu > 0:{u_{x,\lambda }}(y) \geqslant u(y),{v_{x,\lambda }}(y) \geqslant v(y), \quad \forall 0 < \lambda < \mu ,\left| {y - x} \right| \geqslant \lambda } \right\}.
\]
From Lemma \ref{lemStartMS} above, we get $0 < \overline \lambda (x) \leqslant + \infty$. In the next few lemmas, we will show that whenever $\lambda (x)$ is finite for some point $x$, we are able to write down $(u,v)$ precisely.

\begin{lemma}\label{lem3}
If $\overline \lambda (x_0) <\infty$ for some point $x_0 \in \Rset^n$, then
\[u_{x_0,\overline \lambda (x_0)} \equiv u, \quad {v_{{x_0},\overline \lambda ({x_0})}} \equiv v\]
in $\Rset^n$. In addition, we obtain $q = 1 + 2n/p$.
\end{lemma}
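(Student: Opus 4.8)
The plan is to carry out the central step of the method of moving spheres at the critical radius $\overline\lambda:=\overline\lambda(x_0)$: first upgrade the two inequalities defining $\overline\lambda$ to identities, and then read off $q=1+2n/p$ as a by-product of the resulting invariance. Throughout I would write $\alpha:=2n-pq+p$, which is $\geqslant 0$ by Proposition~\ref{pro-RangeForQ}, so that $(\overline\lambda/|z-x_0|)^{\alpha}\leqslant 1$ for $|z-x_0|\geqslant\overline\lambda$.

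First, by continuity of $\mu\mapsto u_{x_0,\mu}(y)$ and $\mu\mapsto v_{x_0,\mu}(y)$ together with the definition of $\overline\lambda$ as a supremum, one still has $u_{x_0,\overline\lambda}(y)\geqslant u(y)$ and $v_{x_0,\overline\lambda}(y)\geqslant v(y)$ for all $|y-x_0|\geqslant\overline\lambda$. Since $t\mapsto t^{-q}$ is strictly decreasing and $(\overline\lambda/|z-x_0|)^{\alpha}\leqslant 1$, the brackets in the two representation formulas of Lemma~\ref{lem1} are nonnegative on $\{|z-x_0|\geqslant\overline\lambda\}$, while $k(x_0,\overline\lambda;\xi,z)$ is strictly positive for $|\xi-x_0|>\overline\lambda$ and $|z-x_0|>\overline\lambda$. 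A strong-maximum-principle reading of Lemma~\ref{lem1} then yields the dichotomy: either $u_{x_0,\overline\lambda}\equiv u$ and $v_{x_0,\overline\lambda}\equiv v$ on $\{|y-x_0|\geqslant\overline\lambda\}$, or both $u_{x_0,\overline\lambda}>u$ and $v_{x_0,\overline\lambda}>v$ hold throughout $\{|y-x_0|>\overline\lambda\}$. (If $u_{x_0,\overline\lambda}$ exceeds $u$ at one point outside the sphere, the $v$-formula of Lemma~\ref{lem1} forces $v_{x_0,\overline\lambda}>v$ everywhere outside, which fed back into the $u$-formula forces $u_{x_0,\overline\lambda}>u$ everywhere outside; the roles of $u$ and $v$ are symmetric.)

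Next I would exclude the strict alternative. Assuming it holds, divide $u_{x_0,\overline\lambda}-u$ by $|\xi-x_0|^p$ and let $|\xi|\to\infty$: using $u_{x_0,\overline\lambda}(\xi)/|\xi-x_0|^p\to u(x_0)/\overline\lambda^p$ (continuity of $u$ at $x_0$ from Lemma~\ref{lem-Regularity}) and $u(\xi)/|\xi-x_0|^p\to\int_{\Rset^n}v^{-q}$ (from \eqref{eqGrowth2}), Fatou's lemma applied to the $k$-representation of Lemma~\ref{lem1} shows that $u(x_0)/\overline\lambda^p-\int_{\Rset^n}v^{-q}$ bounds a nonnegative integral whose integrand is strictly positive on $\{|z-x_0|>\overline\lambda\}$ (strictly, precisely because we are in the strict alternative); hence $u(x_0)>\overline\lambda^p\int_{\Rset^n}v^{-q}$, and symmetrically $v(x_0)>\overline\lambda^p\int_{\Rset^n}u^{-q}$. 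With these strict inequalities I would then contradict the maximality of $\overline\lambda$ by showing that $u_{x_0,\mu}\geqslant u$ and $v_{x_0,\mu}\geqslant v$ persist on $\{|y-x_0|\geqslant\mu\}$ for $\mu$ slightly larger than $\overline\lambda$, treating three regions separately: the shell near $\{|y-x_0|=\overline\lambda\}$, where smoothness (Lemma~\ref{lem-Regularity}) and positivity (Lemma~\ref{lem-Growth}) give the sign of $\nabla_y\big(|y-x_0|^{-p/2}w(y)\big)\cdot(y-x_0)$ exactly as in Lemma~\ref{lemStartMS}; the far region $\{|y-x_0|\geqslant R\}$, where $u(x_0)/\mu^p>\int_{\Rset^n}v^{-q}$, $v(x_0)/\mu^p>\int_{\Rset^n}u^{-q}$ for $\mu$ close to $\overline\lambda$ together with the growth \eqref{eqGrowth2}--\eqref{eqGrowth4} give the comparison; and the intermediate compact shell, where the strict gaps $u_{x_0,\overline\lambda}-u\geqslant c>0$, $v_{x_0,\overline\lambda}-v\geqslant c>0$ and continuity in $\mu$ absorb the perturbation. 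Together these contradict the defining property of $\overline\lambda$, so the strict alternative cannot occur.

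Consequently $u_{x_0,\overline\lambda}\equiv u$ and $v_{x_0,\overline\lambda}\equiv v$ on $\{|y-x_0|\geqslant\overline\lambda\}$, and applying the transform once more propagates this to all of $\Rset^n$. Finally, substituting $v_{x_0,\overline\lambda}\equiv v$ into the $k$-representation of $u_{x_0,\overline\lambda}-u\equiv 0$ from Lemma~\ref{lem1} gives $\int_{|z-x_0|\geqslant\overline\lambda}k(x_0,\overline\lambda;\xi,z)\,v(z)^{-q}\big(1-(\overline\lambda/|z-x_0|)^{\alpha}\big)\,dz=0$ for every $|\xi-x_0|>\overline\lambda$; since the kernel is strictly positive and $1-(\overline\lambda/|z-x_0|)^{\alpha}>0$ for $|z-x_0|>\overline\lambda$ as soon as $\alpha>0$, we must have $\alpha=0$, i.e. $q=1+2n/p$. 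The step I expect to be the genuine obstacle is excluding the strict alternative: because $u$ and $v$ are unbounded here, the far-region comparison is not automatic the way it is in the classical bounded-domain setting (the natural quantity $u(x_0)/\mu^p$ versus $\int_{\Rset^n}v^{-q}$ is exactly borderline at $\mu=\overline\lambda$), so the whole "one cannot push the spheres past $\overline\lambda$" argument has to be made uniform in $\mu$ across all three regions simultaneously, relying on the sharp asymptotics recorded in Lemma~\ref{lem-Growth}.
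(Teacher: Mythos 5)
Your strategy coincides with the paper's (moving the sphere at the critical radius, the equality/strict dichotomy read off from the kernel representation of Lemma \ref{lem1}, the identity $q=1+2n/p$ from the vanishing of the bracket once $v_{x_0,\overline\lambda(x_0)}\equiv v$, and the far-field comparison via the asymptotics \eqref{eqGrowth2} and $u_{x_0,\mu}(y)/|y-x_0|^p\to u(x_0)/\mu^p$), and those parts of your sketch are sound. The gap is in the one step you yourself flag as the obstacle: the thin shell $\lambda\leqslant |y-x_0|\leqslant \overline\lambda(x_0)+1$ for $\lambda$ slightly larger than $\overline\lambda(x_0)$. You propose to treat it ``exactly as in Lemma \ref{lemStartMS}'', i.e.\ from the sign of $\nabla_y\big(|y-x_0|^{-p/2}u(y)\big)\cdot(y-x_0)$. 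That sign is only available for $|y-x_0|$ \emph{small}: it comes from the singular factor $|y-x_0|^{-p/2}$ forcing $\nabla u(y)\cdot(y-x_0)<\tfrac p2 u(y)$ near the center, and there is no reason for this inequality at radius $\overline\lambda(x_0)$. In fact it provably fails there in general: for the model solution $u=a(b^2+|y-x_0|^2)^{p/2}$ centered at $x_0$ one has $\nabla u\cdot(y-x_0)-\tfrac p2u$ vanishing exactly at $|y-x_0|=b=\overline\lambda(x_0)$ and becoming positive beyond, so the quantity you invoke degenerates precisely at the critical radius. The borderline character of the shell is intrinsic, not something smoothness and positivity can resolve.

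What is needed instead (and what the paper supplies) is a lower bound on $u_{x_0,\lambda}-u$ of \emph{first order} in $|y-x_0|-\lambda$, since the two functions agree on the sphere $\{|y-x_0|=\lambda\}$ and a crude gap cannot survive there. Concretely, the paper splits the $z$-integral in Lemma \ref{lem1} into a fixed outer annulus $\overline\lambda(x_0)+2\leqslant|z-x_0|\leqslant\overline\lambda(x_0)+3$, where \eqref{eqProof4-ForV} gives a uniform gap $v^{-q}-v_{x_0,\lambda}^{-q}\geqslant\delta_1$ and the kernel satisfies $k(x_0,\lambda;y,z)\geqslant\delta_2(|y-x_0|-\lambda)$ thanks to the positive radial derivative of $k$ at the sphere (see \eqref{eqGradientOfKAtX}), and a nearby annulus $\lambda\leqslant|z-x_0|\leqslant\overline\lambda(x_0)+1$, whose contribution is an error of size $C\varepsilon(|y-x_0|-\lambda)$ because there $|v_{x_0,\overline\lambda(x_0)}^{-q}-v^{-q}|\lesssim\varepsilon$ and $\int k\lesssim|y-x_0|-\lambda$. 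Choosing $\varepsilon$ small makes the good term dominate, and only then does $u_{x_0,\lambda}\geqslant u$ (and likewise for $v$) persist for $\lambda\in(\overline\lambda(x_0),\overline\lambda(x_0)+\varepsilon)$, contradicting the definition of $\overline\lambda(x_0)$. Without an argument of this kind your exclusion of the strict alternative, and hence the lemma, is not established; the rest of your proposal can stand as written once this shell estimate is inserted.
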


\begin{proof}
By the definition of $\overline \lambda (x_0)$, we know that
\begin{equation}\label{eqProof0}
{u_{x_0,\overline \lambda (x_0)}}(y) \geqslant u(y), \quad {v_{x_0,\overline \lambda (x_0)}}(y) \geqslant v(y)
\end{equation}
for any $\left| {y - x_0} \right| \geqslant \overline \lambda (x_0)$. From Lemma \ref{lem1}, we obtain
\begin{equation}\label{eqProof1}
\begin{split}
u_{x_0,\overline \lambda (x_0)}(y) - u(&y) \\
= \int_{|z - x_0| \geqslant \overline \lambda (x_0)} & \left\{ \begin{gathered}
  k( x_0,\overline \lambda (x_0);y,z ) \times  \hfill \\
  \bigg[v{(z)^{ - q}} - {\left( {\frac{{\overline \lambda (x_0)}}{{|z - x_0|}}} \right)^{2n - pq + p}}{v_{x_0,\overline \lambda (x_0)}}{(z)^{ - q}}\bigg] \hfill \\ 
\end{gathered}  \right\}   dz
\end{split}
\end{equation}
and
\begin{equation}\label{eqProof2}
\begin{split}
v_{x_0,\overline \lambda (x_0)}(y) - v(&y) \\
= \int_{|z - x_0| \geqslant \overline \lambda (x_0)} &  \left\{ \begin{gathered}
k( x_0,\overline \lambda (x_0);y,z )  \times  \hfill \\ 
\bigg[u{(z)^{ - q}} - {\left( {\frac{{\overline \lambda (x_0)}}{{|z - x_0|}}} \right)^{2n - pq + p}}{u_{x_0,\overline \lambda (x_0)}}{(z)^{ - q}}\bigg] \hfill \\ 
\end{gathered}  \right\}  dz
\end{split}
\end{equation}
for any $y \in \Rset^n$. Keep in mind that $2n - pq + p \geqslant 0$, there are two possible cases:

\medskip\noindent\textbf{Case 1}. Suppose that either ${u_{x_0,\overline \lambda (x_0)}}(y) = u(y)$ or ${v_{x_0,\overline \lambda (x_0)}}(y) = v(y)$ for any $\left| {y - x_0} \right| \geqslant \overline \lambda (x_0)$ occurs. Without loss of generality, we assume that the former case occurs. Using \eqref{eqProof1} and the positivity of the kernel $k$, we get that $2n - pq + p=0$ and that ${v_{x_0,\overline \lambda (x_0)}}(y) = v(y)$ for any $\left| {y - x_0} \right| \geqslant \overline \lambda (x_0)$. Similarly, by \eqref{eqProof1} we conclude ${u_{x_0,\overline \lambda (x_0)}}(y) = u(y)$ in the whole $\Rset^n$. A similar argument also shows that ${v_{x_0,\overline \lambda (x_0)}}(y) = v(y)$ in $\Rset^n$.

\medskip\noindent\textbf{Case 2}. Suppose that $u_{x_0,\overline \lambda (x_0)}(y) > u(y)$ and ${v_{x_0,\overline \lambda (x_0)}}(y) > v(y)$ for any $\left| {y - x_0} \right| \geqslant \overline \lambda (x_0)$. In this case, we will obtain a contradiction by showing that we can slightly move spheres a little bit over $\overline \lambda (x_0)$ which then violates the definition of $\overline \lambda (x_0)$. To reach such a contradiction, we shall prove that there exists some small number $\varepsilon>0$ such that
\[
u_{x,\lambda } (y) \geqslant u(y) \quad \text{ and } \quad v_{x,\lambda }(y) \geqslant v(y)
\]
for all $0 < \lambda < \overline \lambda (x_0) + \varepsilon$ and all $| y - x | \geqslant \lambda$. Indeed, using \eqref{eqProof0} and \eqref{eqProof1}, in the region ${|z - x_0| \geqslant \overline \lambda (x_0)}$, we obtain
\[v{(z)^{ - q}} - {\left( {\frac{{\overline \lambda (x_0)}}{{|z - x_0|}}} \right)^{2n - pq + p}}{v_{x_0,\overline \lambda (x_0)}}{(z)^{ - q}} \geqslant v{(z)^{ - q}} - {v_{x_0,\overline \lambda (x_0)}}{(z)^{ - q}}.\]
Hence, 
\begin{equation}\label{eqProof3}
\begin{split}
{u_{x_0,\overline \lambda (x_0)}}(y) &- u(y) \geqslant \int_{|z - x_0| \geqslant \overline \lambda (x_0)} 
 \left\{ \begin{gathered}
k( x_0,\overline \lambda (x_0);y,z ) \times  \hfill \\ 
\Big [v{(z)^{ - q}} - {v_{x_0,\overline \lambda (x_0)}}{(z)^{ - q}}\Big] \hfill \\ 
\end{gathered}  \right\}  dz.
\end{split}
\end{equation}

\medskip\noindent\textbf{Estimate of $u_{x_0,\lambda } - u$ outside $B(x_0,\overline \lambda (x_0) + 1)$.} Using the Fatou lemma, from \eqref{eqProof3} we obtain
\[\begin{split}
 \mathop {\lim \inf }\limits_{|y| \to \infty }& \big( |y|^{-p} ({u_{x_0,\overline \lambda (x_0)}} - u)(y) \big) \\
\geqslant & \mathop {\lim \inf }\limits_{|y| \to \infty } \int_{|z - x_0| \geqslant \overline \lambda (x_0)} |y|^{-p} k( x_0,\overline \lambda (x_0);y,z ) \Big[v{(z)^{ - q}} - {v_{x_0,\overline \lambda (x_0)}}{(z)^{ - q}}\Big]dz \\
 \geqslant& \int_{|z - x_0| \geqslant \overline \lambda (x_0)} 
\left\{ \begin{gathered}
\Big( {{{\Big( {\frac{{|z|}}{{\overline \lambda (x_0)}}} \Big)}^p} - 1} \Big) \times  \hfill \\
\bigg[v{(z)^{ - q}} - {\left( {\frac{{\overline \lambda (x_0)}}{{|z - x_0|}}} \right)^{2n - pq + p}}{v_{x_0,\overline \lambda (x_0)}}{(z)^{ - q}} \bigg] \hfill \\ 
\end{gathered}  \right\}dz > 0.
\end{split} \]
As a consequence, outside some large ball, we would have $({u_{x_0,\overline \lambda (x_0)}} - u)(y) \gtrsim |y|^p$ while in that ball and outside of $B(x_0, \overline \lambda (x_0) + 1)$ we would also have $(u_{x_0,\overline \lambda (x_0)} - u)(y) \gtrsim |y|^p$, given the smoothness of $u_{x_0,\overline \lambda (x_0)} - u$ and our assumption $u_{x_0,\overline \lambda (x_0)}(y) > u(y)$. Therefore, there exists some $\varepsilon_1 >0$ such that
\[({u_{x_0,\overline \lambda (x_0)}} - u)(y) \geqslant {\varepsilon _1}|y|^p\]
for all $|y-x_0| \geqslant \overline \lambda (x_0) + 1$. Recall that ${u_{x_0,\lambda }}(y) = (|x_0 - y|/\lambda)^p u({y^{x_0,\lambda }})$; hence there exists some $\varepsilon_2 \in (0, \varepsilon_1)$ such that
\begin{equation}\label{eqProof4}
\begin{split}
 (u_{x_0,\lambda } - u)(y) =& ({u_{x_0,\overline \lambda (x_0)}} - u)(y) + ({u_{x_0,\lambda }} - {u_{x_0,\overline \lambda (x_0)}})(y) \\
 \geqslant& {\varepsilon _1}|y|^p + ({u_{x_0,\lambda }} - {u_{x_0,\overline \lambda (x_0)}})(y) \geqslant \frac{{{\varepsilon _1}}}{2}|y|^p 
\end{split}
\end{equation}
for all $|y-x_0| \geqslant \overline \lambda (x_0) + 1$ and all $\lambda \in (\overline \lambda (x_0),\overline \lambda (x_0) + {\varepsilon _2})$. Repeating the above arguments shows that \eqref{eqProof4} is also valid for $v_{x_0,\lambda } - v$, that is
\begin{equation}\label{eqProof4-ForV}
\begin{split}
 (v_{x_0,\lambda } - v)(y) \geqslant \frac{{{\varepsilon _1}}}{2}|y|^p 
\end{split}
\end{equation}
for all $|y-x_0| \geqslant \overline \lambda (x_0) + 1$ and all $\lambda \in (\overline \lambda (x_0),\overline \lambda (x_0) + {\varepsilon _2})$ for a possibly new constants $\varepsilon_1 $ and $\varepsilon_2$.

\medskip\noindent\textbf{Estimate of $u_{x_0,\lambda } - u$ inside $B(x_0,\overline \lambda (x_0) + 1)$.} For $\varepsilon \in (0,{\varepsilon _2})$ which will be determined later, $\lambda \in (\overline \lambda (x_0),\overline \lambda (x_0) + \varepsilon ) \subset (\overline \lambda (x_0),\overline \lambda (x_0) + {\varepsilon _2})$, and $\lambda \leqslant |y-x_0| \leqslant \overline \lambda (x_0) + 1$, from \eqref{eqProof3}, we estimate
\[\begin{split}
 ({u_{x_0,\lambda }} - u)(y) \geqslant& \int_{|z - x_0| \geqslant \overline \lambda (x_0)} k (x_0,\lambda ;y,z)[v{(z)^{ - q}} - {v_{x_0,\lambda }}{(z)^{ - q}}]dz \\
 \geqslant &\int_{\overline \lambda (x_0) + 1 \geqslant |z - x_0| \geqslant \lambda } k (x_0,\lambda ;y,z)[v{(z)^{ - q}} - {v_{x_0,\lambda }}{(z)^{ - q}}]dz \\
 & + \int_{\overline \lambda (x_0) + 3 \geqslant |z - x_0| \geqslant \overline \lambda (x_0) + 2} k (x_0,\lambda ;y,z)[v{(z)^{ - q}} - {v_{x_0,\lambda }}{(z)^{ - q}}]dz \\
 \geqslant &\int_{\overline \lambda (x_0) + 1 \geqslant |z - x_0| \geqslant \lambda } k (x_0,\lambda ;y,z)[{v_{x_0,\overline \lambda (x_0)}}{(z)^{ - q}} - {v_{x_0,\lambda }}{(z)^{ - q}}]dz \\
 &+ \int_{\overline \lambda (x_0) + 3 \geqslant |z - x_0| \geqslant \overline \lambda (x_0) + 2} k (x_0,\lambda ;y,z)[v{(z)^{ - q}} - {v_{x_0,\lambda }}{(z)^{ - q}}]dz \\
=& I + II. 
\end{split} \]
As we shall see later, $I+II \geqslant 0$ provided $\varepsilon >0$ is sufficiently small. We now estimate $I$ and $II$ term by term.

\medskip\noindent\underline{Estimate of $II$}. From \eqref{eqProof4-ForV}, there exists $\delta_1>0$ such that $\big( v^{-q} - v_{x_0,\lambda}^{-q} \big)(z) \geqslant \delta_1$ for any $\overline \lambda (x_0) +2 \leqslant |z-x_0| \leqslant \overline \lambda (x_0) +3$. Note that by the definition of $k$ given in Lemma \ref{lem1} 
\[
k(x_0,\lambda; y,z)=k(0,\lambda; y-x_0,z -x_0)
\]
and from \eqref{eqGradientOfKAtX} there holds
\[(\nabla _y k) (0,\lambda ;y,z) \cdot y \big|_{|y| = \lambda } = p |y -z| ^{p-2} \big(|z|^2 - |y|^2\big) > 0\]
for all $\overline \lambda (x_0) +2 \leqslant |z| \leqslant \overline \lambda (x_0) +3$. Hence, there exists some constant $\delta_2>0$ independent of $\varepsilon$ such that
\[k(0,\lambda ;y,z) \geqslant {\delta _2}(|y| - \lambda )\]
for all $\overline \lambda (x_0) \leqslant \lambda \leqslant |y| \leqslant \overline \lambda (x_0) + 1$ and all $\overline \lambda (x_0) + 2 \leqslant |z| \leqslant \overline \lambda (x_0) + 3$. By replacing $y$ with $y-x_0$ and $z$ with $z-z_0$, and making use of the rule $k(x_0,\lambda; y,z)=k(0,\lambda; y-x_0,z -x_0)$, we obtain the same constant $\delta_2>0$ for the following estimate
\[k(x_0,\lambda ;y,z) \geqslant {\delta _2}(|y-x_0| - \lambda )\]
for all $\overline \lambda (x_0) \leqslant \lambda \leqslant |y-x_0| \leqslant \overline \lambda (x_0) + 1$ and all $\overline \lambda (x_0) + 2 \leqslant |z-x_0| \leqslant \overline \lambda (x_0) + 3$. Thus, we have
\begin{equation}\label{eqEstimateII}
II \geqslant \delta_1 \delta_2 (|y-x_0|-\lambda) \int_{\overline \lambda (x_0) + 3 \geqslant |z - x_0| \geqslant \overline \lambda (x_0) + 2} dz.
\end{equation}

\medskip\noindent\underline{Estimate of $I$}. To estimate $I$, we first observe that
\[|{v_{x_0,\overline \lambda (x_0)}}^{ - q} - {v^{ - q}}|(z) \lesssim \lambda - \overline \lambda (x_0) \lesssim \varepsilon \]
for all $\overline \lambda (x_0) \leqslant \lambda \leqslant |z-x_0| \leqslant \overline \lambda (x_0) + 1$ and all $\overline \lambda (x_0) \leqslant \lambda \leqslant \overline \lambda (x_0) + \varepsilon$. Also,
\[\begin{split}
 \int_{\lambda \leqslant |z-x_0| \leqslant \overline \lambda(x_0) + 1} & {k(x_0,\lambda ;y,z)dz} \\
= & \int_{\lambda \leqslant |z| \leqslant \overline \lambda (x_0)+ 1} {k(0,\lambda ;y-x_0,z)dz}\\
\leqslant & \int_{\lambda \leqslant |z| \leqslant \overline \lambda (x_0)+ 1} \Big| \Big( \frac{|y-x_0|}{\lambda}\Big)^p-1\Big| { |(y-x_0)^{0,\lambda } - z|^p dz} \\
& + \int_{\lambda \leqslant |z| \leqslant \overline \lambda (x_0)+ 1} {(|(y-x_0)^{0,\lambda } - z|^p - |(y-x_0) - z|^p )dz} \\
 \leqslant& C(|y-x_0| - \lambda ) + C|{(y-x_0)^{0,\lambda }} - (y-x_0)| \\
 \leqslant & C(|y-x_0| - \lambda ).
\end{split} \]
where $C>0$ is a constant independent of $\varepsilon$.
Thus, we obtain
\begin{equation}\label{eqEstimateI}
I \geqslant -C\varepsilon \int_{\overline \lambda (x_0) + 1 \geqslant |z - x_0| \geqslant \lambda } k (x_0,\lambda ;y,z) dz.
\end{equation}
By combining \eqref{eqEstimateI} and \eqref{eqEstimateII}, it follows that for some sufficiently small $\varepsilon>0$ we have
\[\begin{split}
 ({u_{x_0,\lambda }} - u)(y) \geqslant& \bigg( \delta_1 \delta_2 \int_{\overline \lambda (x_0) + 3 \geqslant |z - x_0| \geqslant \overline \lambda (x_0) + 2} dz -C\varepsilon \bigg) (|y-x_0|-\lambda) \geqslant 0
\end{split} \]
for $\overline \lambda (x_0) \leqslant \lambda \leqslant \overline \lambda (x_0) + \varepsilon$ and $\lambda \leqslant |y-x_0| \leqslant \overline \lambda (x_0) + 1$. 

\medskip\noindent\textbf{Estimates of $u_{x_0,\lambda } - u$ and $v_{x_0,\lambda } - v$ when $|y-x_0| \geqslant \overline \lambda (x_0) + 1$.} Combining the preceding estimate for $u_{x_0,\lambda } - u$ in the ball $B(x_0, \overline \lambda (x_0) + 1)$ and \eqref{eqProof4} gives
\[\begin{split}
 ({u_{x_0,\lambda }} - u)(y) \geqslant 0
\end{split} \]
for $\overline \lambda (x_0) \leqslant \lambda \leqslant \overline \lambda (x_0) + \varepsilon$ and $\lambda \leqslant |y-x_0|$. By repeating the procedure above for the difference $v_{x_0,\lambda} -v$, we can conclude that 
$$(v_{x_0,\lambda} -v)(y) \geqslant 0$$ 
for $\overline \lambda (x_0) \leqslant \lambda \leqslant \overline \lambda (x_0) + \varepsilon$ and $\lambda \leqslant |y-x_0|$ where $\varepsilon$ could be smaller if necessary; thus giving us a contradiction to the definition of $\overline\lambda(x_0)$.
\end{proof}

In the last lemma, we will prove that $\overline \lambda (x) <\infty$ everywhere in $\Rset^n$ whenever $\overline \lambda (x_0) <\infty$ for some point $x_0 \in \Rset^n$.

\begin{lemma}\label{lemLamdaVanishAtEveryPoint}
If $\overline \lambda (x_0) <\infty$ for some point $x_0 \in \Rset^n$ then $\overline \lambda (x) <\infty$ for any point $x \in \Rset^n$; hence
\[
u_{x,\overline \lambda (x)} \equiv u \quad \text{ and } \quad {v_{x,\overline \lambda (x)}} \equiv v
\]
for all $x \in \Rset^n$ in $\Rset^n$.
\end{lemma}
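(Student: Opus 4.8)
The plan is the standard concluding step of the method of moving spheres: once the procedure can be started at every point (Lemma \ref{lemStartMS}), one shows that it \emph{stops} at every point, the obstruction being the sharp growth rate $u(x),v(x)\sim|x|^p$ recorded in Lemma \ref{lem-Growth}.

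First I would argue by contradiction and assume that $\overline \lambda (x_1) = +\infty$ for some $x_1 \in \Rset^n$. Unwinding the definition of $\overline \lambda (x_1)$ as a supremum, this forces
\[
u_{x_1,\lambda }(y) \geqslant u(y) \qquad \text{for every } \lambda > 0 \text{ and every } y \text{ with } |y - x_1| \geqslant \lambda .
\]
Now fix $\lambda > 0$ and let $|y| \to \infty$. On one hand, since $y^{x_1,\lambda } = x_1 + \lambda^2 (y - x_1)/|y-x_1|^2 \to x_1$ and $u$ is continuous by Lemma \ref{lem-Regularity},
\[
\frac{u_{x_1,\lambda }(y)}{|y|^p} = \frac{|y-x_1|^p}{\lambda^p\, |y|^p}\, u\big( y^{x_1,\lambda } \big) \longrightarrow \frac{u(x_1)}{\lambda^p}.
\]
On the other hand, by \eqref{eqGrowth2} we have $u(y)/|y|^p \to \int_{\Rset^n} v(z)^{-q}\,dz =: A$, and $A \in (0,+\infty)$ because $v(z)^{-q} \in L^1(\Rset^n)$ by \eqref{eqGrowth1} while $v(z)^{-q} \gtrsim (1+|z|^p)^{-q}$ by \eqref{eqGrowth4}. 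Dividing the displayed inequality by $|y|^p$ (legitimate once $|y|$ is large, as then $|y-x_1| \geqslant \lambda$ automatically) and passing to the limit yields $u(x_1)/\lambda^p \geqslant A$ for every $\lambda > 0$; letting $\lambda \to \infty$ forces $A \leqslant 0$, a contradiction. Hence $\overline \lambda (x) < \infty$ for every $x \in \Rset^n$.

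Finally, applying Lemma \ref{lem3} with $x_0$ replaced by an arbitrary $x \in \Rset^n$ gives $u_{x,\overline \lambda (x)} \equiv u$ and $v_{x,\overline \lambda (x)} \equiv v$ on $\Rset^n$ for all $x$, which is exactly the assertion. The one point requiring a little care is the asymptotic evaluation of $u_{x_1,\lambda }(y)/|y|^p$, which leans on the continuity of $u$ from Lemma \ref{lem-Regularity} together with the sharp asymptotics \eqref{eqGrowth2}; everything else is bookkeeping, so I do not expect a serious obstacle here. (In fact this argument never uses the hypothesis that $\overline \lambda (x_0) < \infty$ at one point: the growth estimate \eqref{eqGrowth2} alone rules out $\overline \lambda \equiv +\infty$, so one obtains finiteness at every point directly; keeping the hypothesis merely streamlines the logical flow of the classification.)
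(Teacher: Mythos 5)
Your argument is correct, and it differs from the paper's in one interesting way. The paper also reduces the finiteness of $\overline\lambda(x)$ to the computation $\liminf_{|y|\to\infty}|y|^{-p}u_{x,\lambda}(y)=\lambda^{-p}u(x)$ followed by letting $\lambda$ grow, but it obtains the needed positive limit of $|y|^{-p}u(y)$ from the hypothesis: since $\overline\lambda(x_0)<\infty$, Lemma \ref{lem3} gives $u_{x_0,\overline\lambda(x_0)}\equiv u$, whence $\lim_{|y|\to\infty}|y|^{-p}u(y)=\overline\lambda(x_0)^{-p}u(x_0)>0$, and then $\overline\lambda(x_0)^{-p}u(x_0)\leqslant\lambda^{-p}u(x)$ for all $\lambda<\overline\lambda(x)$ forces $\overline\lambda(x)<\infty$. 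You instead take the positive finite limit directly from \eqref{eqGrowth2} (with finiteness and positivity supplied by \eqref{eqGrowth1} and \eqref{eqGrowth4}), so the hypothesis $\overline\lambda(x_0)<\infty$ is never used; your observation that finiteness of $\overline\lambda$ is in fact unconditional for solutions of \eqref{eqIntegralSystem} is correct, and it would render Case~1 (the alternative $\overline\lambda\equiv\infty$, handled via Lemma \ref{lemKey1}) in the proof of Theorem \ref{thmCLASSIFICATION} vacuous. What the paper's conditional formulation buys is fidelity to the standard moving-spheres dichotomy, which also works in settings where no a priori growth rate like \eqref{eqGrowth2} is available; what your route buys is economy here, since \eqref{eqGrowth2} is already established in Lemma \ref{lem-Growth}. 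The only points of care — continuity of $u$ at $x_1$ (Lemma \ref{lem-Regularity}), finiteness of $u(x_1)$ (from \eqref{eqGrowth4}), and the fact that $\overline\lambda(x_1)=\infty$ yields the comparison inequality for every $\lambda>0$ by downward closedness of the admissible set — are all handled or immediate, and the final passage to $u_{x,\overline\lambda(x)}\equiv u$, $v_{x,\overline\lambda(x)}\equiv v$ via Lemma \ref{lem3} at each $x$ is exactly the paper's conclusion.
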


\begin{proof}
Suppose that there exists some $x_0 \in \Rset^n$ such that $\overline\lambda(x_0)<\infty$, then by Lemma \ref{lem3} and for $|y|$ sufficiently large, we have
\begin{align*}
 {\left| y \right|^{-p} }u(y) %&= | y |^{-p} u_{x_0,\overline \lambda (x_0)} (y)\\
& =  \left| y \right|^{-p}  {\Big( {\frac{{\overline \lambda ({x_0})}}{{\left| {y - {x_0}} \right|}}} \Big)^{-p} }u\Big( {{x_0} + \lambda {{({x_0})}^2}\frac{{y - x_0}}{{{{\left| {y - x_0} \right|}^2}}}} \Big) \\
 &= \overline \lambda {({x_0})^{-p}}{\Big( {\frac{{\left| y- x_0 \right|}}{{\left| {y } \right|}}} \Big)^p }u\Big( {{x_0} + \lambda {{({x_0})}^2}\frac{{y - x_0}}{{{{\left| {y - x_0} \right|}^2}}}} \Big) .
\end{align*}
This implies
\begin{equation}\label{eq14}
\mathop {\lim }\limits_{\left| y \right| \to \infty } {\left| y \right|^{-p} }u(y) = \overline \lambda {({x_0})^{-p}}u({x_0}).
\end{equation}
By repeating the same argument, we obtain
\begin{equation}\label{eq15}
\mathop {\lim }\limits_{\left| y \right| \to \infty } {\left| y \right|^{-p} } v(y) = \overline \lambda {({x_0})^{-p}} v(x_0).
\end{equation}
Let $x \in \Rset^n$ be arbitrary. By the definition of $\overline\lambda(x)$ we get that $u_{x,\lambda } (y) \geqslant u(y)$ and $v_{x,\lambda } (y) \geqslant v(y)$ for all $0 < \lambda < \overline \lambda (x)$ and all $x,y$ such that $ | y - x | \geqslant \lambda$. Then by a direct computation and using \eqref{eq14}, we can easily see that
\begin{equation}\label{eq16}
\begin{split}
\mathop {\lim \inf }\limits_{\left| y \right| \to \infty } {\left| y \right|^{-p}}u(y) &\leqslant \mathop {\lim \inf }\limits_{\left| y \right| \to \infty } {\left| y \right|^{-p}}{u_{x,\lambda }}(y) \\
&= \mathop {\lim \inf }\limits_{\left| y \right| \to \infty } {\left| y \right|^{-p} }{\Big( {\frac{{\lambda }}{{\left| {y - x} \right|}}} \Big)^{-p} }u\Big( {x + \lambda^2 \frac{{y - x }}{{{{\left| {y - x } \right|}^2}}}} \Big) \\
&= \lambda ^{-p} u(x)
\end{split}
\end{equation}
for all $0 < \lambda < \overline \lambda (x)$. Combining \eqref{eq14} and \eqref{eq16}, we obtain $ \overline \lambda (x_0)^{-p} u( x_0 ) \leqslant \lambda ^{-p} u(x)$ for all $0 < \lambda < \overline \lambda (x)$. Therefore, $\overline \lambda (x) < +\infty$ for all $x \in {\Rset^n}$ as claimed.
\end{proof}

\subsection{Proof of Theorem \ref{thmCLASSIFICATION}}

To conclude Theorem \ref{thmCLASSIFICATION}, we first recall the following two lemmas from \cite{l2004}. These two lemmas have been used repeatedly in many works related to the underlying problem.

\begin{lemma}\label{lemKey1}
For $\nu \in \Rset$ and $f$ a function defined on $\Rset^n$, valued in $[-\infty, +\infty]$ let
\[
{\Big( {\frac{\lambda }{{\left| {y - x} \right|}}} \Big)^\nu }f\Big( {x + {\lambda ^2}\frac{{y - x}}{{{{\left| {y - x} \right|}^2}}}} \Big) \leqslant f(y)
\]
for all $x,y$ satisfying $| x - y | > \lambda > 0$. Then $f$ is constant or is identical to infinity.
\end{lemma}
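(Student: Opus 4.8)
The plan is to prove the slightly more precise statement that $f(y_1)=f(y_2)$ for \emph{every} pair of distinct points $y_1,y_2\in\Rset^n$; this immediately forces $f$ to equal a single value $c\in[-\infty,+\infty]$ on all of $\Rset^n$, which is exactly the dichotomy ``$f$ constant or identically infinite''. The crucial feature to exploit is that the hypothesis is assumed for \emph{all} centers $x$ and \emph{all} radii $\lambda>0$ (subject only to $|y-x|>\lambda$), so we are free to tailor the pair $(x,\lambda)$ to the chosen pair $(y_1,y_2)$.

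So fix $y_1\ne y_2$. First I would record the elementary geometric fact that the inversion $\xi\mapsto x+\lambda^2(\xi-x)/|\xi-x|^2$ carries $y_1$ to $y_2$ precisely when $x,y_1,y_2$ are collinear with $y_1,y_2$ lying on a common ray issuing from $x$ and $\lambda^2=|y_1-x|\,|y_2-x|$. Accordingly, place $x$ on the line through $y_1$ and $y_2$, beyond $y_2$, so that $y_2$ lies strictly between $x$ and $y_1$; then $|y_1-x|>|y_2-x|$, and with $\lambda:=\sqrt{|y_1-x|\,|y_2-x|}$ one checks in one line that $|y_1-x|>\lambda$ and $x+\lambda^2(y_1-x)/|y_1-x|^2=y_2$. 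Plugging $y=y_1$ into the hypothesis then gives
\[
t^{\nu}\,f(y_2)\leqslant f(y_1),\qquad t:=\frac{\lambda}{|y_1-x|}=\sqrt{\frac{|y_2-x|}{|y_1-x|}}\in(0,1).
\]

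Now let $x$ recede to infinity along that ray. Since $|y_1-x|-|y_2-x|=|y_1-y_2|$ stays fixed, we have $|y_2-x|/|y_1-x|\to1$, hence $t\to1^{-}$ and $t^{\nu}\to1$. Passing to the limit in the displayed inequality — the cases $f(y_2)$ finite, $f(y_2)=+\infty$, $f(y_2)=-\infty$ are all immediate because $t^{\nu}>0$ — yields $f(y_2)\leqslant f(y_1)$. Interchanging the roles of $y_1$ and $y_2$ (that is, now sending the center off to infinity beyond $y_1$) gives $f(y_1)\leqslant f(y_2)$, whence $f(y_1)=f(y_2)$; as $y_1,y_2$ were arbitrary, $f$ is constant on $\Rset^n$ with value in $[-\infty,+\infty]$, which is the claim. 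I do not anticipate a genuine obstacle here: the only points that demand care are the one-line verification that the collinear choice really produces the inversion relation $\phi_{x,\lambda}(y_1)=y_2$ together with $|y_1-x|>\lambda$, and the extended-real bookkeeping, where one invokes the conventions $c\cdot(\pm\infty)=\pm\infty$ for $c>0$ so that each inequality and limit is meaningful; everything else is an elementary limit as $|x|\to\infty$.
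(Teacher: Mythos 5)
Your proof is correct: the collinear placement of $x$ beyond $y_2$ with $\lambda^2=|y_1-x|\,|y_2-x|$ does give $y_1^{x,\lambda}=y_2$ with $|y_1-x|>\lambda$, and letting $x$ recede along the ray makes the factor $(\lambda/|y_1-x|)^{\nu}\to 1$, yielding $f(y_2)\leqslant f(y_1)$ and, by symmetry, equality. The paper itself offers no proof of this lemma (it is simply quoted from Li's paper \cite{l2004}), and your argument is essentially the standard one given there, so there is nothing further to reconcile.
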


\begin{lemma}\label{lemKey2}
For $\nu \in \Rset$ and $f$ a continuous function in $\Rset^n$. Suppose that for every $x\in \Rset^n$, there exists $\lambda(x)>0$ such that
\[{\Big( {\frac{{\lambda (x)}}{{\left| {y - x} \right|}}} \Big)^\nu }f\Big( {x + \lambda {{(x)}^2}\frac{{y - x}}{{{{\left| {y - x} \right|}^2}}}} \Big) = f(y)\]
for all $y \in {\Rset^n} \setminus \{ x\}$. Then for some $a \geqslant 0$, $d>0$ and $\overline x \in \Rset^n$ 
\[f(x) = \pm a \big(d + | x - \overline x |^2 \big)^{-\nu /2} .\]
\end{lemma}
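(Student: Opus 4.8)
The plan is to follow the classical ``moving-spheres calculus lemma'' argument (see \cite{l2004}).

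\textbf{Reductions.} If $f\equiv 0$ the conclusion holds with $a=0$. If $\nu=0$ the hypothesis says $f$ is invariant under every inversion $F_x(y):=x+\lambda(x)^2(y-x)/|y-x|^2$; since $F_x(y)\to x$ as $|y|\to\infty$, continuity of $f$ forces $f$ to be constant, which is of the asserted form. Otherwise one checks that $f$ cannot vanish: a zero of $f$ is carried by the family $\{F_x\}_x$ first into a neighbourhood of infinity and then into a neighbourhood of every point, so $f$ would be identically zero. Replacing $f$ by $-f$ if necessary, we may thus assume $f>0$ and $\nu\neq 0$.

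\textbf{Normalization.} Letting $|y|\to\infty$ in $(\lambda(x)/|y-x|)^\nu f(F_x(y))=f(y)$ and using $F_x(y)\to x$ together with continuity of $f$ gives $|y|^\nu f(y)\to\lambda(x)^\nu f(x)$ for \emph{every} $x$; hence $L:=\lim_{|y|\to\infty}|y|^\nu f(y)$ exists in $(0,\infty)$ and $\lambda(x)=(L/f(x))^{1/\nu}$, so $\lambda$ is continuous. Substituting $\lambda(x)^\nu=L/f(x)$ back turns the hypothesis into the symmetric equation $f(x)f(y)|x-y|^\nu=L\,f(F_x(y))$, and with $\Psi:=(f/L)^{-2/\nu}$ it becomes
\[
\Psi(x)\,\Psi(y)=|x-y|^2\,\Psi\bigl(F_x(y)\bigr),\qquad F_x(y)=x+\Psi(x)\,\frac{y-x}{|y-x|^2},
\]
where $\Psi>0$ is continuous and $\Psi(z)/|z|^2\to 1$ as $|z|\to\infty$. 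It now suffices to show $\Psi(x)=d+|x-\overline x|^2$ for some $\overline x\in\Rset^n$ and $d>0$, for then $f=L\,\Psi^{-\nu/2}=a\,(d+|x-\overline x|^2)^{-\nu/2}$ with $a=L>0$.

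\textbf{Rigidity of the functional equation.} The core step is to read off $\nabla\Psi$. Fix $x$ and let $|y|\to\infty$ with $(y-x)/|y-x|\to\omega$. Using $|x-y|^2\Psi(F_x(y))/\Psi(x)=\Psi(y)$ and $|F_x(y)-x|=\Psi(x)/|x-y|$ one obtains the identity
\[
\frac{\Psi(y)-|x-y|^2}{|x-y|}=\frac{\Psi(F_x(y))-\Psi(x)}{|F_x(y)-x|},
\]
in which $F_x(y)\to x$ along the direction $\omega$. As $|y|\to\infty$ the right-hand side tends to the one-sided directional derivative $\partial^+_\omega\Psi(x)$, while expanding $|x-y|$ shows the left-hand side tends to $\ell(\omega)+2\,x\cdot\omega$, where $\ell(\omega):=\lim_{|y|\to\infty}(\Psi(y)-|y|^2)/|y|$ taken along the direction $\omega$. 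Hence $\partial^+_\omega\Psi(x)=\ell(\omega)+2\,x\cdot\omega$ for all $x$ and $\omega$; once $\Psi$ is known to be $C^1$, the linearity of $\omega\mapsto\nabla\Psi(x)\cdot\omega$ forces $\ell$ to be linear, say $\ell(\omega)=c\cdot\omega$, whence $\nabla\Psi(x)=2x+c$ for a fixed vector $c$. Integrating gives $\Psi(x)=|x|^2+c\cdot x+\gamma$, and positivity of $\Psi$ forces this quadratic to be $d+|x-\overline x|^2$ with $\overline x=-c/2$ and $d=\gamma-|c|^2/4>0$. (Conceptually, the functional equation says $\Psi$ is a positive conformal density of a fixed weight left invariant by the transitive family of inversions $\{F_x\}$, which pins it down as the density attached to a timelike vector, i.e.\ $d+|x-\overline x|^2$ in affine coordinates.)

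\textbf{Main obstacle.} The one genuinely delicate point is regularity: $f$, hence $\Psi$, is assumed only continuous, so the differential step above must be preceded by an upgrade of the regularity of $\Psi$ (a bootstrap exploiting the functional equation and the asymptotics $\Psi(z)\sim|z|^2$), which also justifies the passages to the limit defining $\ell(\omega)$ and the one-sided derivatives. This is the technical heart of the lemma and is carried out in \cite{l2004}; the remaining ingredients — the reductions, the normalization producing $L$, and the algebra converting $\Psi(x)=d+|x-\overline x|^2$ back into the formula for $f$ — are routine.
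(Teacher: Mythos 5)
The paper does not actually prove this lemma: it is recalled verbatim from Li \cite{l2004} (these ``calculus lemmas'' of the moving-sphere method are quoted, not reproved), so your sketch has to stand on its own. The parts you carry out are correct: the reduction to $f$ of one sign and $\nu\neq 0$, the identification $L=\lim_{|y|\to\infty}|y|^{\nu}f(y)=\lambda(x)^{\nu}f(x)$ for every $x$, the reformulation as $\Psi(x)\Psi(y)=|x-y|^{2}\Psi(F_x(y))$ with $\lambda(x)^{2}=\Psi(x)$ and $\Psi(z)/|z|^{2}\to 1$, the difference-quotient identity, and the final integration/positivity step turning $\nabla\Psi(x)=2x+c$ into $\Psi(x)=d+|x-\overline x|^{2}$.

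The genuine gap is the one you flag and then defer: the existence of the directional limits $\ell(\omega)$ and the differentiability of $\Psi$. For $f\in C^{1}$ your argument is essentially complete (differentiability at $x$ makes the right-hand side of your identity converge, which then forces $\ell(\omega)$ to exist, and the rest follows); this is in effect the $C^{1}$ argument of Li--Zhu \cite{lz1995}. But the lemma assumes only continuity, and the passage from $C^{0}$ to differentiability is the entire content in this setting. The normalization yields only $\Psi(y)-|y|^{2}=o(|y|^{2})$, far short of what is needed for $\ell(\omega)=\lim\,(\Psi(y)-|y|^{2})/|y|$ to exist; a merely continuous $\Psi$ need not have one-sided directional derivatives; and the ``bootstrap exploiting the functional equation'' you invoke cannot upgrade regularity, because the equation merely transports values of $\Psi$ by composition with smooth inversions, so $\Psi$ near a point is exactly as rough as $\Psi$ near infinity. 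This is why the known proofs of the $C^{0}$ version (Li--Zhang, and the appendix of \cite{l2004}) are structured to avoid differentiating $f$ altogether, rather than to regularize it. So, as written, your proposal proves the statement under an extra $C^{1}$ hypothesis and otherwise reduces the crucial step to the very citation the paper itself uses; as a self-contained proof of the stated lemma it is incomplete at precisely its technical heart.
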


To prove Theorem \ref{thmCLASSIFICATION}, we will consider the following two possible cases:

\smallskip\noindent\textbf{Case 1.} If $\overline\lambda(x)=\infty$ for any $x \in \Rset^n$, then $u_{x,\lambda }(y) \geqslant u(y)$ for all $\lambda > 0$ and for any $x, y$ satisfying $|y-x| \geqslant \lambda$. By Lemma \ref{lemKey1}, $u$ must be a constant. Similarly, $v$ is also a constant. However, this is not the case since solutions of \eqref{eqIntegralSystem} cannot be constant. 

\smallskip\noindent\textbf{Case 2.} If there exists some $x_0 \in \Rset^n$ such that $\overline\lambda(x_0)<\infty$, then by Lemma \ref{lemLamdaVanishAtEveryPoint}, we deduce that $\overline\lambda(x)<\infty$ for any point $x \in \Rset^n$. By Lemma \ref{lemKey2}, we express $u$ as
\begin{equation}\label{eqForm4U}
u(x) = a_1 ( b_1^2 + | {x - \overline x_1 } | ^2)^{p/2}
\end{equation}
for some $a_1, d_1>0$ and some point $\overline x_1 \in \Rset^n$. Similarly, $v$ can be expressed as
\begin{equation}\label{eqForm4V}
v(x) = a_2 ( b_2^2 + | {x - \overline x_2 } | ^2)^{p/2}
\end{equation}
for some $a_2, d_2>0$ and some point $\overline x_2 \in \Rset^n$. To realize that $u \equiv v$, we observe that $u$ given in \eqref{eqForm4U} satisfies the following equation
\[
 u(x) = \int_{\Rset^n} { |x-y|^p u(y)^{-(1+2n)/p} dy};
\]
see \cite[Appendix A]{l2004}. Using the above equation for $u$ and \eqref{eqIntegralSystem}, we must have $u \equiv v$ in $\Rset^n$ and hence we conclude that
\[ 
u(x)=v(x) = a(b^2 + |x-\overline x|^2)^{p/2}
\]
for some constants $a, b>0$ and some $\overline x \in \Rset^n$ as claimed.

\subsection{Proof of Theorem \ref{thmOPTIMAL}}

Theorem \ref{thmOPTIMAL} follows immediately from Theorem \ref{thmCLASSIFICATION}. The reason is because the sharp constant $\Cscr_{n,\lambda}$ as stated in theorem can also be computed using the precise form of the optimal functions established in Theorem \ref{thmCLASSIFICATION}. For this reason, we will omit the proof and refer interested readers to \cite[Section 3.2.2]{dz2014}. 

\subsection{The limiting case of the reversed HLS inequality (\ref{eq:RHLS-diagonal})}

Let us now consider the limiting case $\lambda = 0$ in \eqref{eq:RHLS-diagonal}. Clearly for this case, $2n/(2n+\lambda)=1$ and hence $\Cscr_{n, 0}=1$ is also sharp since
\begin{equation}\label{eq:RHLS-diagonal-1-1}
\int_{\Rset^n} \int_{\Rset^n} f(x) g(y) dx dy = \|f\|_{L^1(\Rset^n)} \|g\|_{L^1(\Rset^n)}.
\end{equation}
For each $\lambda >0$, we combine \eqref{eq:RHLS-diagonal} and \eqref{eq:RHLS-diagonal-1-1} to get
\begin{equation}\label{eq:RHLS-diagonal-1-2}
\begin{split}
\int_{\Rset^n} \int_{\Rset^n}  f(x) &\frac{|x-y|^\lambda - 1}{\lambda } g(y) dx dy \\
\geqslant & \frac{1}{\lambda} 
\left\{ \begin{gathered}
   \Cscr_{n, \lambda}  \|f\|_{L^\frac{2n}{2n+\lambda}(\Rset^n)} \|g\|_{L^\frac{2n}{2n+\lambda}(\Rset^n)} - \\
  \|f\|_{L^1(\Rset^n)} \|g\|_{L^1(\Rset^n)} \\ 
\end{gathered}  \right\},
\end{split}
\end{equation}
where the constant $\Cscr_{n, \lambda}$ given in Theorem \ref{thmOPTIMAL} is as follows
\[
\Cscr_{n,\lambda} ={\pi ^{\lambda /2}}\frac{{\Gamma (n/2 - \lambda /2)}}{{\Gamma (n - \lambda /2)}}{\left( {\frac{{\Gamma (n)}}{{\Gamma (n/2)}}} \right)^{1-\lambda /n}} .
\]
Taking the limit under the integral sign in \eqref{eq:RHLS-diagonal-1-2} as $\lambda \searrow 0$, we first obtain
\begin{equation}\label{eq:RHLS-diagonal-1-3}
\begin{split}
-\int_{\Rset^n}  \int_{\Rset^n} f(x) &\log|x-y| g(y) dx dy \\
\geqslant & \lim_{\lambda \searrow 0}  \frac{1}{\lambda} 
\left\{ \begin{gathered}
  \Cscr_{n, \lambda}\|f\|_{L^\frac{2n}{2n+\lambda}(\Rset^n)} \|g\|_{L^\frac{2n}{2n+\lambda}(\Rset^n)} - \\
  \|f\|_{L^1(\Rset^n)} \|g\|_{L^1(\Rset^n)}  \\ 
\end{gathered}  \right\} \\
= & \lim_{\lambda \searrow 0} \frac{\Cscr_{n, \lambda} -1 }{\lambda }  \|f\|_{L^\frac{2n}{2n+\lambda}(\Rset^n)} \|g\|_{L^\frac{2n}{2n+\lambda}(\Rset^n)} \\
& + \lim_{\lambda \searrow 0} \frac 1\lambda 
\left\{ \begin{gathered}
   \|f\|_{L^\frac{2n}{2n+\lambda}(\Rset^n)} \|g\|_{L^\frac{2n}{2n+\lambda}(\Rset^n)} -   \\
  \|f\|_{L^1(\Rset^n)} \|g\|_{L^1(\Rset^n)} \\ 
\end{gathered}  \right\}.
\end{split}
\end{equation}
By denoting $\Cscr_{n,0}^\star = \lim_{\lambda \searrow 0} (\Cscr_{n, \lambda} -1 )/\lambda$, which can be easily computed explicitly, the first term on the right most of \eqref{eq:RHLS-diagonal-1-3} becomes $\Cscr_{n,0}^\star \|f\|_{L^1(\Rset^n)} \|g\|_{L^1(\Rset^n)}$. For the remaining terms, the calculation is a bit more tedious; however, after long computations, we get
\[\begin{split}
 \frac{1}{2n}\big( \|f\|_{ L^1(\Rset^n) } &\log \|f\|_{ L^1(\Rset^n) } - \|f\log f \|_{ L^1(\Rset^n) } \big) \|g\|_{ L^1(\Rset^n) } \hfill \\
 &+ \frac{1}{2n}\big( \|g\|_{ L^1(\Rset^n) }\log \|g\|_{ L^1(\Rset^n) } - \|g\log g\|_{ L^1(\Rset^n) } \big) \|f\|_{ L^1(\Rset^n) }. 
\end{split} \]
Formally, we obtain the following reversed log-HLS inequality 
\begin{equation}\label{eq:RHLS-diagonal-1-4}
\begin{split}
-\int_{\Rset^n} \int_{\Rset^n} & f(x) \log|x-y| g(y) dx dy \geqslant \Cscr_{n,0}^\star \|f\|_{L^1(\Rset^n)} \|g\|_{L^1(\Rset^n)} \\
 &+ \frac{1}{2n} \big( \|f\|_{ L^1(\Rset^n) } \log \|f\|_{ L^1(\Rset^n) } - \|f\log f \|_{ L^1(\Rset^n) } \big) \|g\|_{ L^1(\Rset^n) } \hfill \\
 &+ \frac{1}{2n}\big( \|g\|_{ L^1(\Rset^n) }\log \|g\|_{ L^1(\Rset^n) } - \|g\log g\|_{ L^1(\Rset^n) } \big) \|f\|_{ L^1(\Rset^n) }.
\end{split}
\end{equation}
The above formal derivation requires some conditions for $f$ and $g$ in order for \eqref{eq:RHLS-diagonal-1-4} to hold. In view of \cite[Theorem 1]{CarlenLoss92}, one possible assumption of $f$ and $g$ could be $f, g \in L^1 (\Rset^n)$ with $ f(x) \log (1+|x|^2) \in L^1 (\Rset^n)$ and $g(x) \log (1+|x|^2) \in L^1 (\Rset^n)$. We do not treat this issue in the present paper and leave it for interested readers.

\section*{Acknowledgments}

Q.A.N. would like to thank Ninh Van Thu and Do Duc Thuan for their interest and useful discussion, especially on Section 4 of the paper. He would also like to express his gratitude to the Vietnam Institute for Advanced Study in Mathematics (VIASM) for hosting where part of this paper was prepared and announced in \cite{Ngo2015}. V.H.N. would like to thank the European Research Council for providing him the support from the research grant numbered 305629. Lastly, both authors would like to thank Eunice Chew Shuhui for her careful proofreading of this paper.

\end{document}